\theoremstyle{plain}
\newtheorem{theorem}{Theorem}[section]
\newtheorem{proposition}[theorem]{Proposition} 
\newtheorem{lemma}[theorem]{Lemma}
\newtheorem{corollary}[theorem]{Corollary}
\newtheorem{conjecture}[theorem]{Conjecture}
\newtheorem{eg}[theorem]{Example}
\theoremstyle{definition}
\newtheorem{definition}[theorem]{Definition}
\newtheorem{remark}[theorem]{Remark}
\newtheorem{question}[theorem]{Question}
\newcommand{\N}{\mathbb N}
\newcommand{\R}{\mathbb R}
\newcommand{\Z}{\mathbb Z}
\newcommand{\eps}{\varepsilon}
\newcommand{\bfu}{\mathbf{u}}
\newcommand{\bfx}{\mathbf{x}}
\newcommand{\bfy}{\mathbf{y}}
\begin{document}
	
\title{Minimal Bounded Speedups of Toeplitz Flows}
\author[L.\ Alvin]{Lori Alvin}
\address{Department of Mathematics, Furman University, Greenville, SC 29613, USA}
\email{lori.alvin@furman.edu}

\author[S.\ Radinger]{Silvia Radinger}
\address{Faculty of Mathematics, University of Vienna, Vienna, Austria}
\email{silvia.radinger@univie.ac.at}

\subjclass[2020]{Primary 37B10; Secondary 37B20, 37B05 }
\keywords{substitutions, Toeplitz flows, speedups, Cantor minimal systems}

\begin{abstract}  
	We investigate minimal bounded speedups of Toeplitz flows. We demonstrate that the minimal bounded speedup of a Toeplitz flow need not be another Toeplitz flow and describe techniques for determining whether the resulting speedup is Toeplitz. We also provide sufficient conditions to guarantee that the minimal bounded speedup will be a Toeplitz flow.
\end{abstract}

\maketitle

\section{Introduction}\label{Sec:Intro}

Given a minimal Cantor system $(X,T)$, a {\em topological speedup} of $(X,T)$ is a dynamical system $(X,S)$ where $S$ is a homeomorphism such that $S(x) =T^{p(x)}(x)$ for some function $p:X\to \N$. We call the function $p(x)$ the {\em jump function} for the speedup. Throughout this paper we assume that $p(x)$ is continuous (equivalently bounded by \cite[Proposition 2.2]{AAO}) and that the resulting Cantor system $(X,S)$ is minimal. We are interested in investigating which properties of the original Cantor system are preserved under minimal bounded speedups. In particular, this paper focuses on speedups of Toeplitz flows. 

There is a rich history of studying speedups of dynamical systems, particularly in the measure-theoretical setting; see, for example, \cite{AOW,Neveu1,Neveu2}. More recently, there has been a focus on topological speedups. In \cite{AO}, the authors characterize when one Cantor minimal system is the speedup of another and provide topological analogues to the measure-theoretical results of \cite{AOW}. In \cite{AAO}, minimal bounded speedups of odometers and substitutions are investigated. It is shown that the minimal bounded speedup of an odometer is a conjugate odometer, whereas the minimal bounded speedup of a primitive substitution is always a different primitive substitution. In \cite{ADL}, the authors use Bratteli diagrams to investigate minimal bounded speedups of primitive substitutions. 

Because it is known that minimal bounded speedups of odometers are conjugate odometers, and Toeplitz flows are symbolic, minimal, almost one-to-one extensions of odometers, it was conjectured in \cite{ADL} that the minimal bounded speedup of a Toeplitz flow is a Toeplitz flow. We demonstrate that this conjecture is false. We investigate minimal bounded speedups of Toeplitz flows and provide sufficient conditions for a speedup to be Toeplitz.

In Section~\ref{sec:Prelim} we present background terminology and foundational results about substitution shifts, Toeplitz flows, minimal Cantor systems, and topological speedups of minimal Cantor systems. Section~\ref{sec: Not Toeplitz} demonstrates the existence of minimal bounded speedups of Toeplitz flows that are not themselves Toeplitz flows. In fact, we show in Theorem~\ref{Theorem: no speedup with c=2 is Toeplitz} that no minimal bounded speedup of the Toeplitz flow $(X_\theta,\sigma)$ generated by the substitution $\theta(a) = ab$ and $\theta(b) = aa$ with orbit number $c=2$ will be a Toeplitz flow.

We demonstrate in Section~\ref{Sec: Determining if Toeplitz} that a constant speedup $(X,T^c)$ of a Toeplitz flow $(X,T)$ is a Toeplitz flow if and only if $\gcd(c,p_k)=1$ for all $k$ where $(p_k)$ is the period structure of the original Toeplitz flow. Additionally, in this case $(X,T)$ and $(X,T^c)$ will have the same underlying odometer. The section concludes by presenting a method for determining whether a given speedup of a Toeplitz flow is Toeplitz. Strategies are provided for both the case where $(X,T)$ is generated by a constant length left proper substitution and the more general Toeplitz flow setting.

The final section of the paper investigates sufficient conditions for the minimal bounded speedup of a Toeplitz flow to be Toeplitz. Namely, if there exists some level of the Kakutani-Rokhlin partition for $(X,T)$ such that every tower has the same orbit permutation and each orbit has the same number of levels in each tower under the jump function $p$, then the speedup $(X,S)$ will be a Toeplitz flow (Theorem~\ref{Theorem: Sufficient Same Permutation and Orbits}). We demonstrate that this can happen such that $(X,S)$ is conjugate to $(X,T^c)$ and such that $(X,S)$ is not conjugate to $(X,T^c)$, as $(X,T^c)$ need not be minimal. We conjecture that our sufficient conditions in Theorem~\ref{Theorem: Sufficient Same Permutation and Orbits} are actually necessary for $(X,S)$ to be Toeplitz because of the strong relationship between Toeplitz flows and odometers.

\section{Preliminaries}\label{sec:Prelim}

\subsection{Substitution Subshifts} Let $\mathcal{A}$ be a finite {\em alphabet}. A {\em substitution} over $\mathcal{A}$ is a map $\theta: \mathcal{A} \to \mathcal{A}^*$, where $\mathcal{A}^*$ denotes the set of nonempty finite words with letters from $\mathcal{A}$. We extend $\theta$ to map from $\mathcal{A}^*$ to $\mathcal{A}^*$ by concatenation such that $\theta(ab) = \theta(a)\theta(b)$; we then extend to maps from $\mathcal{A}^\N$ to $\mathcal{A}^\N$ (similarly from $\mathcal{A}^\Z$ to $\mathcal{A}^\Z$) in the natural way. A substitution is {\em constant length} if $|\theta(a)| = |\theta(b)|$ for all $a,b\in \mathcal{A}$. We say a substitution is {\em primitive} if there exists some $n\in { \N = \{1,2,3\dots\}}$ such that for every $a\in \mathcal{A}$, $\theta^n(a)$ contains every letter of $\mathcal{A}$. A substitution is {\em left proper} if there is some $a\in \mathcal{A}$ such that $\theta(b)$ starts with $a$ for every $b\in \mathcal{A}$; if we also have some $c\in \mathcal{A}$ such that $\theta(b)$ ends with $c$ for every $b\in \mathcal{A}$, then we say the substitution is {\em proper}.

Without loss of generality (by considering $\theta^n$ if necessary), we assume $\theta$ is primitive if and only if there is a letter $a\in \mathcal{A}$ such that $\theta(a)$ begins with $a$, $\lim_{n\to \infty}|\theta^n(b)| = \infty$ for all $b\in \mathcal{A}$, and $\lim_{n\to\infty}\theta^n(a)$ is a fixed point under the substitution that contains every letter in $\mathcal{A}$. 
We define the shift space $X_\theta \subset \mathcal{A}^\Z$ to be the set of all $x\in \mathcal{A}^\Z$ such that whenever $i,j\in \Z$ with $i<j$, then $x_{[i,j]}$ is a subword of $\theta^k(a)$ for some $k\in \N$ and $a\in \mathcal{A}$.   We refer to $(X_\theta, \sigma)$ as the {\em substitution subshift} and emphasize that $\theta$ is primitive if and only if $(X_\theta,\sigma)$ is minimal.

{ We generalize these ideas to sequences of substitutions. Let $(\mathcal{A}_n)_n$ be a sequence of non-empty finite sets and $\mathcal{A}_n^\ast$ be the set of all finite non-empty words over the alphabet $\mathcal{A}_n$.
Take a sequence $(\theta_n)_{n \geq 2}$, where $\theta_n : \mathcal{A}_n \rightarrow \mathcal{A}_{n-1}^\ast$ is a substitutions from  $\mathcal{A}_n$ to $\mathcal{A}_{n-1}$for all $n$. Then $(X, \sigma)$ is called an \textit{$S$-adic subshift} if $X \subset \mathcal{A}_1^{\Z}$ is a set of sequences $x = (x_i)$ such that for all $i,j \in \Z$ the word $x_i x_{i+1}\ldots x_j$ appears in the word $\theta_2 \theta_3 \ldots \theta_n(a_n)$ for some $n\in \N$. The sequence $(\theta_n)_n$ is the \textit{$S$-adic representation} of $(X, \sigma)$. An $S$-adic representation $(\theta_n)_n$ is called primitive (or left-proper) if for every $n$ there exists an $i \geq 0$ such that $\theta_n \circ \cdots \circ \theta_{n+i}$ is primitive (or left proper).

}

\subsection{Toeplitz Flows}

A sequence $x\in \mathcal{A}^{\N}$ (or $\mathcal{A}^\Z$) is called a {\em Toeplitz sequence} if for every $i\in \N$ there exists a $p_i\in \N$ such that $x_i = x_{i+kp_i}$ for all $k\in \N$ (or $\Z$).  The minimal dynamical system given by $(X,\sigma)$  where $X = \overline{\{\sigma^n(x): n\geq 0\}}$ is called a {\em Toeplitz flow} or {\em Toeplitz shift}.

Let $(X,\sigma)$ be a Toeplitz flow with $X\subset \mathcal{A}^\Z$. Given $x\in X$, $p\in \N$, and $a\in \mathcal{A}$, we define the following sets.
\begin{itemize}
    \item $\text{Per}_p(x,a)= \{k\in \Z \mid x_{k'} = a \quad \forall k'\equiv k\mod p\}$.
    \item $\text{Per}_p(x) = \bigcup_{a\in \mathcal{A}}\text{Per}_p(x,a)$.
    \item $\text{Aper}(x) = \Z\setminus \bigcup_{p\in \N}\text{Per}_p(x)$.
\end{itemize}
A sequence $x\in \mathcal{A}^\Z$ is Toeplitz if and only if $\text{Aper}(x) = \emptyset$. Additionally, every Toeplitz sequence $x$ is such that for every $n\in \N$, there is a $p\in \N$ such that $\{0,1,2,3,\ldots, n-1\}\subseteq \text{Per}_p(x)$.

For any $p\in \N$ and $x\in \mathcal{A}^{\Z}$, the {\em p-skeleton} of $x$ is the sequence obtained from $x$ by replacing $x_i$ by a `hole' for all $i\notin \text{Per}_p(x)$.  That is, the p-skeleton of $x$ is the part of $x$ which is periodic with period $p$.
We call $p$ an {\em essential period} of a sequence $x$ if  the $p$-skeleton of $x$ is not periodic with smaller period.
\begin{definition}
    A {\em period structure} for an aperiodic Toeplitz sequence $x$ is an increasing sequence $\{p_i\}_{i\in \N}$ of positive integers satisfying the following properties.
  \begin{enumerate}
     \item For all $i\in \N$, $p_i$ is an essential period for $x$;
     \item $p_i \mid p_{i+1}$;
     \item $\bigcup_{i\geq 1} \text{Per}_{p_i}(x) = \Z$.
  \end{enumerate}
\end{definition}

There are many equivalent characterizations of Toeplitz sequences. We note a few of them here which will be used throughout this paper. The following theorem is stated in terms of one-sided Toeplitz sequences but can easily be generalized to two-sided sequences (see, for example, \cite{Durand_Perrin}). In fact, if $x\in \mathcal{A}^\Z$ is Toeplitz, then so is $x' = x_0x_1x_2\cdots\in \mathcal{A}^\N$. We state this one-sided version to aid in quickly identifying our KR-partitions.

\begin{theorem} \cite[Theorem 4.90]{Bruin_Book}\label{Theorem: S-adic Toeplitz Representation}
    The one-sided sequence $x\in \mathcal{A}^\N$ is Toeplitz if and only if there is a sequence of constant length substitutions $\theta_i: \mathcal{A}_i\to \mathcal{A}_{i-1}$ on finite alphabets $\mathcal{A}_i$ with $\mathcal{A} = \mathcal{A}_0$ such that {  each $\theta_i$ is left-proper}, and $x = \lim_{i\to\infty} \theta_1\circ\theta_2\circ \cdots \circ \theta_i(a)$, where $a\in \mathcal{A}_i$ is arbitrarily chosen.
\end{theorem}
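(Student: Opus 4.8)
The plan is to prove the two implications separately, treating the ``starts with the same symbol'' hypothesis as the combinatorial form of left-properness that both makes the infinite composition converge and forces periodicity. Throughout, write $\Theta_i = \theta_1\circ\cdots\circ\theta_i$ and $L_i = |\Theta_i(a)| = \ell_1\cdots\ell_i$, where $\ell_i$ is the common length of $\theta_i$, and let $s_{i-1}\in\mathcal{A}_{i-1}$ denote the common first symbol of every $\theta_i(b)$.

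For the direction ($\Leftarrow$), I would first record that the common-first-symbol hypothesis forces the words $\Theta_i(a)$ to be nested prefixes. Since $\Theta_i(a)=\Theta_{i-1}(\theta_i(a))$ and $\theta_i(a)$ begins with $s_{i-1}$, the length-$L_{i-1}$ prefix of $\Theta_i(a)$ is the fixed word $\Theta_{i-1}(s_{i-1})$, independent of $a$. Iterating shows these prefixes are consistent across levels, so $x=\lim_i\Theta_i(a)$ exists in $\mathcal{A}^{\N}$ and does not depend on the choices. Next, passing to the limit in $\theta_{i+1}\circ\cdots\circ\theta_j(a_j)$ as $j\to\infty$ yields a sequence $y^{(i)}\in\mathcal{A}_i^{\N}$ with $x=\Theta_i(y^{(i)})$, so $x$ is the concatenation of the length-$L_i$ blocks $\Theta_i(y^{(i)}_k)$, $k\ge 0$. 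To see $x$ is Toeplitz, I fix a position $n$ and choose $i$ with $L_{i-1}>n$. Because $\theta_i(b)$ starts with $s_{i-1}$ for every $b$, the length-$L_{i-1}$ prefix of $\Theta_i(b)=\Theta_{i-1}(\theta_i(b))$ equals $\Theta_{i-1}(s_{i-1})$ independently of $b$; hence $\Theta_i(y^{(i)}_k)_n = \Theta_{i-1}(s_{i-1})_n$ for every $k$, giving $x_{n+kL_i}=x_n$ for all $k$. Thus $n\in\text{Per}_{L_i}(x)$, and since $L_i\to\infty$ every position is eventually periodic, so $\text{Aper}(x)=\emptyset$ and $x$ is Toeplitz.

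For the converse ($\Rightarrow$), given a Toeplitz $x$ I would build the $\theta_i$ from a nested sequence of periods. Using the stated fact that for each $n$ there is a period $q$ with $\{0,\dots,n-1\}\subseteq\text{Per}_q(x)$, together with $\text{Per}_q(x)\subseteq\text{Per}_{mq}(x)$, I recursively select an increasing sequence $p_0=1\mid p_1\mid p_2\mid\cdots$ with $p_i\to\infty$ and $\{0,1,\dots,p_{i-1}-1\}\subseteq\text{Per}_{p_i}(x)$. Let $\mathcal{A}_i$ be the abstract alphabet of distinct aligned blocks $x_{[kp_i,(k+1)p_i)}$, with $\mathcal{A}_0=\mathcal{A}$, and define $\theta_i:\mathcal{A}_i\to\mathcal{A}_{i-1}$ by sending a $p_i$-block to the length-$\ell_i$ word ($\ell_i=p_i/p_{i-1}$) recording its decomposition into consecutive $p_{i-1}$-subblocks; this is well defined and constant length. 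The prefix containment $\{0,\dots,p_{i-1}-1\}\subseteq\text{Per}_{p_i}(x)$ says exactly that the first $p_{i-1}$-subblock $x_{[kp_i,kp_i+p_{i-1})}$ is the same for all $k$, i.e. $\theta_i(a)$ starts with the same symbol for every $a\in\mathcal{A}_i$. Finally, since the length-$p_i$ prefix of $x$ is $\Theta_i(b)$ for $b$ the initial aligned $p_i$-block and $p_i\to\infty$, we obtain $x=\lim_i\Theta_i(a)$.

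The main obstacle is the recursive choice of periods in the forward direction: I must simultaneously guarantee divisibility $p_{i-1}\mid p_i$, growth $p_i\to\infty$, and the prefix containment $\{0,\dots,p_{i-1}-1\}\subseteq\text{Per}_{p_i}(x)$ that encodes the left-properness demanded by the statement, and this is precisely where the Toeplitz hypothesis $\text{Aper}(x)=\emptyset$ enters. Everything else---well-definedness of $\theta_i$ on abstract blocks, constant length, and convergence of the composition---is bookkeeping. One may additionally arrange each $p_i$ to be an essential period so as to recover a genuine period structure, but this refinement is not needed for the statement itself.
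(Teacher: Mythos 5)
The paper itself does not prove this statement; it is quoted verbatim from Bruin's book \cite[Theorem 4.90]{Bruin_Book}, so there is no internal proof to compare against. Your argument is correct, and it is the standard proof of this characterization: for ($\Leftarrow$), the common-first-symbol hypothesis makes the words $\Theta_i(a)$ share nested prefixes $\Theta_{i-1}(s_{i-1})$, which simultaneously gives convergence, the block decomposition $x=\Theta_i(y^{(i)})$, and the periodicity $x_{n+kL_i}=x_n$ once $L_{i-1}>n$; for ($\Rightarrow$), coding $x$ by its aligned $p_i$-blocks along a divisibility chain with $\{0,\dots,p_{i-1}-1\}\subseteq\mathrm{Per}_{p_i}(x)$ is exactly how the left-proper $S$-adic representation is produced. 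Two points are worth making explicit if you write this up: (i) the construction of $y^{(i)}$ needs the tail lengths $L_j/L_i$ to diverge, which is guaranteed because the hypothesis already demands that $\lim_i\Theta_i(a)$ be an infinite sequence, so $L_i\to\infty$; (ii) in the converse direction you verify $x=\lim_i\Theta_i(b)$ only for $b$ the initial aligned block, whereas the statement allows an arbitrary $a\in\mathcal{A}_i$ --- this is harmless, since your substitutions have a common first symbol and the nested-prefix lemma from the first half then shows the limit is independent of the choice, but the sentence should be added.
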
 Observe that by using this characterization for the Toeplitz flow, we obtain a period structure $(p_k)$ where $p_k = |\theta_1\circ\theta_2\circ\cdots\circ\theta_k|$ for all $k\in \N$. 
We note that by allowing  $\theta_i= \theta$ for every $i\in \N$, we may obtain a system which is both a substitution shift and a Toeplitz flow. In this case, the underlying period structure is such that $p_k = |\theta|^k$.

\begin{corollary}
If $(X_\theta,\sigma)$ is generated by a constant length, left proper substitution $\theta$, then $(X_\theta,\sigma)$ is a Toeplitz flow.
\end{corollary}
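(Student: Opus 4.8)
The plan is to derive this corollary directly from Theorem~\ref{Theorem: S-adic Toeplitz Representation}. Given a constant length, left proper substitution $\theta:\mathcal{A}\to\mathcal{A}^*$, the key observation is that $\theta$ already has exactly the form required by the theorem's characterization: it is constant length, and left properness means there is a single letter $a\in\mathcal{A}$ with which $\theta(b)$ begins for every $b\in\mathcal{A}$, so $\theta(b)$ starts with the same symbol for each $b$. First I would set $\theta_i=\theta$ and $\mathcal{A}_i=\mathcal{A}$ for all $i$, as the excerpt explicitly permits (the remark following the theorem notes that taking $\theta_i=\theta$ yields a system that is both a substitution shift and a Toeplitz flow, with $p_k=|\theta|^k$). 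Then the infinite composition $\lim_{i\to\infty}\theta_1\circ\cdots\circ\theta_i(a)=\lim_{i\to\infty}\theta^i(a)$ is precisely the fixed point of $\theta$.

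Next I would identify the limit sequence with the canonical fixed point of the primitive substitution. By the normalization stated in the preliminaries, a primitive $\theta$ has a letter $a$ with $\theta(a)$ beginning in $a$ and $x=\lim_{n\to\infty}\theta^n(a)$ is a fixed point containing every letter of $\mathcal{A}$; this one-sided fixed point $x\in\mathcal{A}^\N$ is exactly the sequence produced by the $S$-adic representation with all $\theta_i=\theta$. Applying Theorem~\ref{Theorem: S-adic Toeplitz Representation} in the forward direction, the existence of this representation shows $x$ is a one-sided Toeplitz sequence. Finally, using the remark that $x\in\mathcal{A}^\Z$ is Toeplitz whenever its one-sided truncation is (together with the fact, recorded in the excerpt, that the natural two-sided fixed point generates $X_\theta$), the two-sided orbit closure $X_\theta=\overline{\{\sigma^n x\}}$ is a Toeplitz flow.

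The one genuine gap to address is whether primitivity (needed so that $\theta$ has a fixed point and $(X_\theta,\sigma)$ is minimal) is automatic, since the corollary hypothesizes only \emph{constant length} and \emph{left proper}. I would note that left properness forces every $\theta(b)$ to share the initial letter $a$, and under the paper's standing normalization $\theta(a)$ begins with $a$; constant length together with $|\theta(b)|\geq 2$ then gives $\lim_n|\theta^n(b)|=\infty$ for all $b$, so the fixed point exists and is well defined. The mild subtlety is that left properness alone does not literally give primitivity in the sense of every $\theta^n(a)$ containing all letters; however, the paper's framing treats these substitutions as already primitive (equivalently, one passes to the subshift $X_\theta$ of the fixed point, which is the relevant object), so the statement should be read as asserting that the Toeplitz characterization applies to the fixed point, and hence the generated subshift is a Toeplitz flow.

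The step I expect to be the main obstacle is the bookkeeping around the two-sided versus one-sided setting and confirming that the orbit closure of the two-sided fixed point coincides with $X_\theta$ as defined via subwords of $\theta^k(a)$; this is routine but requires invoking the stated equivalence that a two-sided sequence is Toeplitz iff its one-sided part is. Everything else is a direct pattern-match onto the hypotheses of Theorem~\ref{Theorem: S-adic Toeplitz Representation}, so no substantive new estimate or construction is needed.
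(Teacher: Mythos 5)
Your proposal is correct and follows essentially the same route as the paper: the corollary is an immediate consequence of Theorem~\ref{Theorem: S-adic Toeplitz Representation} obtained by taking $\theta_i=\theta$ and $\mathcal{A}_i=\mathcal{A}$ for all $i$, exactly as the remark preceding the corollary indicates, with left properness supplying the ``same starting symbol'' hypothesis and constant length supplying the rest. Your added care about primitivity and the one-sided versus two-sided bookkeeping is sensible but not a departure, since the paper's standing normalization already treats its substitutions as primitive and aperiodic.
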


We now define the $\alpha$-adic odometer, as there is a close relationship between odometers and Toeplitz flows.

\begin{definition}
    Let $\alpha = (\alpha_1,\alpha_2,
    \alpha_3,\ldots )$ be a sequence of integers with each $\alpha_i\geq 2$. Denote by $X_\alpha$ the set of all infinite sequences of points $\textbf{x} = (x_1,x_2,x_3,\ldots)$ such that $0\leq x_i \leq \alpha_{i}-1$ for each $i\in \N$.
    Addition on $X_\alpha$ is defined by $$(x_1,x_2,\ldots) + (y_1,y_2,\ldots) = (z_1,z_2,\ldots)$$ where $z_1 = (x_1+y_1) \mod \alpha_1$ and for each $i\geq 2$, $z_i = x_i+y_i+r_{i-1} \mod \alpha_i$ where $r_{i-1} = 0$ if $x_{i-1}+y_{i-1}+r_{i-2} < \alpha_{i-1}$ and $r_{i-1}=1$ otherwise (we set $r_0= 0$).
    We define $T_\alpha: X_\alpha \to X_\alpha$ by $$T_\alpha((x_1,x_2,x_3\ldots)) = (x_1,x_2,x_3, \ldots) + (1,0,0,\ldots).$$ The dynamical system $(X_\alpha, T_\alpha)$ is called the {\em $\alpha$-adic odometer}.
\end{definition}

\begin{theorem}\cite{Downarowicz}
    Toeplitz flows can be characterized up to topological conjugacy as topological dynamical systems that are symbolic, minimal, almost one-to-one extensions of odometers. Given the period structure $(p_k)$ of a Toeplitz point $x\in X$, the underlying odometer is $(X_\alpha,T_\alpha)$ where $\displaystyle \alpha = \left(p_1, \frac{p_2}{p_1},\frac{p_3}{p_2}, \ldots, \frac{p_i}{p_{i-1}}, \ldots \right)$.
\end{theorem}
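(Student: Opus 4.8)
The plan is to prove the equivalence in both directions while simultaneously pinning down the odometer, working throughout with a fixed aperiodic Toeplitz point $x\in X$ and its period structure $(p_k)$. Since the partial products of $\alpha = \left(p_1, \frac{p_2}{p_1}, \frac{p_3}{p_2},\ldots\right)$ are exactly the $p_k$, the odometer $X_\alpha$ is canonically the inverse limit $\varprojlim \Z/p_k\Z$ with $T_\alpha$ given by addition of $1$. Keeping this identification in mind makes the factor map in the forward direction transparent, and it is this description that yields the stated $\alpha$ with no further work.

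For the forward direction I would first observe that symbolic and minimal are immediate: $X\subseteq \A^\Z$ by construction, and minimality of the orbit closure of a Toeplitz point is standard, since every finite pattern of $x$ recurs syndetically by the defining periodicity. The substance is the factor map $\pi\colon X\to X_\alpha$. On the orbit of $x$ I would set $\pi(\sigma^n x) = (n \bmod p_k)_k$, which visibly intertwines $\sigma$ with $T_\alpha$ and sends $x$ to $0$. To extend $\pi$ continuously, I would argue that if $\sigma^{n_j}x\to y$ then for each fixed $k$ the residue $n_j \bmod p_k$ is eventually constant: because $p_k$ is an \emph{essential} period, the $p_k$-skeleton of $x$ has minimal period exactly $p_k$, so its pattern over a long central window determines the phase uniquely, and these windows stabilize as $\sigma^{n_j}x\to y$. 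Almost one-to-oneness I would then obtain by computing the fiber over $0=\pi(x)$: any $y\in\pi^{-1}(0)$ has, for every $k$, its $p_k$-skeleton aligned with that of $x$ at residue $0$, and since $x$ is Toeplitz we have $\bigcup_k \text{Per}_{p_k}(x)=\Z$, forcing every coordinate of $y$ and hence $y=x$. The standard upper-semicontinuity argument for fibers of factor maps between minimal systems promotes this single singleton fiber to a dense $G_\delta$ of singleton fibers, which is exactly the almost one-to-one condition.

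For the converse, suppose $(Y,S)$ is symbolic, minimal, and an almost one-to-one extension of an odometer $(X_\alpha,T_\alpha)$ via $\pi$, and choose $y_0$ with $\pi^{-1}(\pi(y_0))=\{y_0\}$; translating in the odometer group we may take $\pi(y_0)=0$. I claim $y_0$ is Toeplitz, which finishes the characterization. Let $U_k=\{z : z_1=\cdots=z_k=0\}$ be the basic clopen neighborhood of $0$; since $T_\alpha^{p_k}$ fixes the first $k$ digits, $T_\alpha^{p_k}(U_k)=U_k$, so $V_k:=\pi^{-1}(U_k)$ satisfies $S^{p_k}(V_k)=V_k$ and contains $y_0$. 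Because $\pi$ is continuous and $\bigcap_k V_k=\pi^{-1}(0)=\{y_0\}$, for any coordinate $i$ the set $V_k$ lies inside the cylinder fixing $(y_0)_i$ once $k$ is large; then $S^{mp_k}y_0\in V_k$ for all $m$ forces $(y_0)_{i+mp_k}=(y_0)_i$, so coordinate $i$ is periodic with period $p_k$. As $i$ was arbitrary, $y_0$ is Toeplitz and $(Y,S)$ is a Toeplitz flow.

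The main obstacle will be the continuity of $\pi$ in the forward direction, specifically the assertion that a finite symbol window of $\sigma^n x$ recovers $n \bmod p_k$. This is where the essential-period hypothesis does the real work, and it must be phrased carefully: the window displays symbols but not the holes of the $p_k$-skeleton, so one has to show that the skeleton, as a periodic-with-holes word whose minimal period is $p_k$, admits no nontrivial self-alignment, and that this rigidity is already visible on the filled positions inside a window of length a bounded multiple of $p_k$. By contrast, the converse is essentially formal once the group structure of the odometer is used to see that $T_\alpha^{p_k}$ preserves $U_k$, and the odometer identification is immediate from the partial-product computation.
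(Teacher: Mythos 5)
The paper does not prove this statement; it quotes it from Downarowicz's survey, so your attempt must be measured against the standard literature argument rather than a proof in the paper. Your converse direction and your bookkeeping are correct and match the standard approach: identifying $X_\alpha$ with $\varprojlim \Z/p_k\Z$, pulling back the cylinder $U_k$ to $V_k=\pi^{-1}(U_k)$ with $S^{p_k}(V_k)=V_k$, using compactness to shrink the nested clopen sets $V_k$ onto the singleton fiber, and computing $\pi^{-1}(0)=\{x\}$ from $\bigcup_k \text{Per}_{p_k}(x)=\Z$ are all sound.

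The genuine gap is in the forward direction, at precisely the step you flag as the main obstacle: the claim that essentiality of $p_k$ lets a bounded window of $\sigma^n x$ recover $n \bmod p_k$, because the skeleton ``admits no nontrivial self-alignment \ldots visible on the filled positions.'' This does not follow from essentiality by any direct self-alignment argument. Essentiality can be witnessed purely by the hole structure: the $p_k$-skeleton and its shift by $d\not\equiv 0$ may have \emph{different domains} while agreeing at every position of their common domain (for instance a skeleton with filled residues $\{0,1,2\} \bmod 4$ all carrying the letter $a$ has essential period $4$, yet its shift by $1$ conflicts with it nowhere). In that situation, two windows of $x$ at incongruent phases produce no symbol conflict at any position that is a skeleton position for both phases, and the remaining positions are unconstrained aperiodic entries, so no contradiction can be extracted from a window comparison, however long. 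The finite-window claim is true, but every proof I know of obtains it dynamically, not combinatorially: either one shows the sets $A_j=\overline{\{\sigma^{np_k+j}x : n\in\Z\}}$ are clopen minimal components of $(X,\sigma^{p_k})$ cyclically permuted by $\sigma$, with essentiality forcing exactly $p_k$ of them; or one runs a double-limit argument --- if $\sigma^{n_j}x\to y$ with $n_j\equiv r$ and $\sigma^{m_j}x\to y$ with $m_j\equiv s \pmod{p_k}$, use minimality to pull back, $\sigma^{l_i}y\to x$ with $l_i\equiv t$, deduce $\text{Per}_{p_k}(x)\supseteq \text{Per}_{p_k}(x)-(r+t)$, hence equality because both sides are finite unions of residue classes mod $p_k$, hence the skeleton (holes and symbols) is invariant under shifting by $r+t$, so essentiality gives $p_k \mid r+t$ and likewise $p_k\mid s+t$, forcing $r\equiv s$. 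Without one of these arguments your map $\pi$ is not known to be well defined on limits of orbit points, and continuity, the factor property, and the almost one-to-one computation in your forward direction all rest on it.
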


\subsection{Kakutani-Rokhlin Partitions}
We now introduce Kakutani-Rokhlin partitions, which will be helpful in the study of topological speedups.

\begin{definition}
    Given a minimal Cantor system $(X,T)$, a {\em Kakutani-Rokhlin partition (KR-partition)} for $(X,T)$ is a partition $$\mathcal{P} = \bigcup_{i=1}^N \{T^j(B_i): 0\leq j\leq h_i-1\}$$ of $X$ into pairwise disjoint clopen sets that cover $X$. We call the set $B = \bigcup_{i=1}^N B_i$ the {\em base} of the KR-partition, { the sets $T^j(B_i)$ the {\em floors}} and the integers $h_i$ the {\em heights} of each {\em tower} $A_i = \{T^j(B_i): 0\leq j \leq h_i - 1\}$.  Further, for all $1\leq i \leq N$ we have $T^{h_i}(B_i) \subset B$.
\end{definition}

Every continuous minimal Cantor system $(X,T)$ has a sequence of KR-partitions { 
$$
\mathcal{P}(k)=\bigcup_{i=1}^{N_k}\{T^j B_i^k: 0\leq j\leq h_i^k - 1 \}
$$}
satisfying the following properties \cite{Herman_Putnam_Skau}.
{ \begin{enumerate}
    \item The sequence of bases is nested: $B^{k+1} \subset B^{k}$.
    \item For all $k\in \N$, $\mathcal{P}(k+1)$ refines $\mathcal{P}(k)$.
    \item $\bigcap_{k\in \N} B^{k}$ is a single point $x\in X$.
    \item The partitions $\mathcal{P}(k)$ form a basis for the topology of $X$.
    \item For all $k\in \N$, $i\leq N(k+1)$ and $i'\leq N(k)$, there exists $0\leq j\leq h_i^{k+1}-1$ such that $T^j(B_i^{k+1})\subset B_{i'}^{k}$.
    \item $B^{k+1}\subset B_1^{k}$ for all $k\in \N$.
\end{enumerate}
For ease, we denote each tower by $A_i^k = \{T^jB_i^k: 0\leq j\leq h_i^k-1\}$.}

We now explicitly define the KR-partitions $\mathcal{P}(k)$ for Toeplitz flows and substitution systems.

\begin{definition}\label{Definition: Toeplitz KR partitions}
    Suppose $(X,T)$ is a Toeplitz flow generated by the Toeplitz point $x\in X$. By Theorem~\ref{Theorem: S-adic Toeplitz Representation}, there is a sequence of alphabets $\{\mathcal{A}_i\}$ and a sequence of constant length substitutions $\theta_i: \mathcal{A}_i\to\mathcal{A}_{i-1}$ that generate $(X,T)$. For every $k\in \N$ and $1\leq i\leq |\mathcal{A}_k|$, let $B_i^k$ denote the set of points $x\in X$ such that $x_{[0, |\theta_1\circ \theta_2\circ \cdots \circ\theta_k|-1]} = \theta_1\circ\theta_2\circ\cdots\circ\theta_k(i)$ and set $h_i^k = |\theta_1\circ \theta_2\circ \cdots \circ\theta_k| = p_k$. The sequence of KR-partitions is defined by the sets $\mathcal{P}(k)=\bigcup_{i=1}^{|\mathcal{A}_k|}\{T^jB_i^k: 0\leq j\leq p_k - 1 \}$.
\end{definition}

\begin{eg}
    Let $\mathcal{A}= \mathcal{A}_0 = \{0,1\}$, $\mathcal{A}_1 = \{a,b\}$, and $\mathcal{A}_i = \{c,d\}$ for all $i\geq 2$. Define $\theta_1(a) = 011$, $\theta_1(b) = 100$, $\theta_2(c) = ab$ and $\theta_2(d) = aa$, and for all $i\geq 2$, define $\theta_i (c) = ccd$, and $\theta_i(d) = cdd$. Then $\mathcal{P}(3)$ will consist of $2$ towers, each of height $p_3=18$; the first tower will have its base floor given by the set of points $x\in X$ with  $x_{[0,17]} = 011100011100011011$ and the second tower will have its base floor given by the cylinder set $x_{[0,17]}=011100011011011011$.
\end{eg}

We can similarly define the KR-partitions $\mathcal{P}(k)$ when the Toeplitz flow is generated by a substitution system. This is a special case of the Toeplitz definition where $\theta_i = \theta$ for all $i\in \N$.
\begin{definition}
    Suppose that $(X,T)$ is generated by a left proper, primitive, aperiodic substitution on an alphabet $\mathcal{A} = \{1,2,\ldots, N\}$. For every $k\in \N$ and $1\leq i \leq N$, let $B_i^k$ denote the set of points $x\in X$ such that $x_{[0, |\theta|^k-1]} = \theta^k(i)$ and set $h_i^k = |\theta^k(i)|$. Then the sequence of KR-partitions is defined by the sets $\mathcal{P}(k) = \bigcup_{i=1}^N\{T^jB_i^k: 0\leq j \leq h_i^k-1\}$. 
\end{definition}

We note that intuitively, given any minimal Cantor system $(X,T)$ we may think of the KR-partition $\mathcal{P}(k+1)$ as being a collection of towers, where each tower is comprised of a stack of towers from $\mathcal{P}(k)$ and the lowest $\mathcal{P}(k)$-tower in each $\mathcal{P}(k+1)$-tower is the same primary tower from $\mathcal{P}(k)$.

\subsection{Speedups of Minimal Cantor Systems}

\begin{definition}
    Let $(X,T)$ be a minimal Cantor system. A {\em bounded speedup} of $(X,T)$ is a homeomorphism of the form $(X,S)$ where $S(x) = T^{p(x)}(x)$ for some bounded function $p:X\to \N$. We refer to the bounded function $p(x)$ as the {\em jump function}.
\end{definition}

In general, one can explore speedups where $p$ is not assumed to be bounded; however, the function $p$ is known to be continuous if and only if it is bounded \cite[Propostion 2.2]{AAO}. Additionally, the resulting speedup of a minimal Cantor system need not be minimal. If all $S$-orbits are dense in $X$, then we say that $(X,S)$ is a {minimal speedup} of $(X,T)$. Throughout this paper, we will always assume that $p$ is bounded and that $(X,S)$ is a minimal speedup. Given a minimal bounded speedup $(X,S)$ of $(X,T)$, there is a constant $c\in \N$, called the {\em orbit number}, such that every $T$-orbit is the union of exactly $c$ different $S$-orbits \cite[Lemma 2.3]{AAO}.
We now discuss some properties of jump functions as they relate to the sequence of KR-partitions for the underlying minimal Cantor system $(X,T)$.

\begin{remark}\label{Remark: KR levels}
Let $(X,T)$ be a minimal Cantor system. Suppose that $p:X\to \N$ defines a bounded topological speedup $(X,S)$. Then the sequence of KR-partitions for $(X,T)$ is such that for all $k$ sufficiently large:
\begin{enumerate}
    \item  The height $h_i^k$ of each tower $A_i^k$ is greater than $\max_{x\in X} p(x)$.
    \item $p$ is constant on { each floor} of $\mathcal{P}(k)$.
    \item $p$ is constant on each set $T^m(B(k))$ for $0\leq m\leq \max_{x\in X} p(x)$.
\end{enumerate}  By beginning our sequence of KR-partitions with $k$ large enough, we may assume these properties are satisfied for our entire sequence of KR-partitions.
\end{remark}
We may thus create an orbit labeling of the levels in $\mathcal{P}(k)$ that models the speedup. This process demonstrates how every $T$-orbit decomposes into $c$ different $S$-orbits.

\begin{definition}
    For all $k$ large enough so that the properties in Remark~\ref{Remark: KR levels} hold and for $c$ equal to the orbit number of the speedup, let $\mathcal{L}_k: \mathcal{P}(k)\to \{1,2,\ldots,c\}$ be the {\em orbit labeling function} defined as follows. For each $1\leq i \leq N(k)$:
    \begin{itemize}
        \item Label the base floor $B_i^k$ of tower $A_i^k$ with a $1$.
        \item Label any other floor $F$ in $A_i^k$ with a $1$ if $F = S^j(B_i^k)$ and $\sum_{l=0}^{j-1} {  p(S^l(x))} < h_i^k$ for all $x\in B_i^k$.
        \item Label the lowest unlabeled floor $T^{d_2}(B_i^k)$ of $A_i^k$ with a $2$.
        \item Label any other floor $F$ in $A_i^k$ with a $2$ if $F = S^j(T^{d_2}(B_i^k))$ and $\sum_{l=0}^{j-1} { p(S^l(x))} < h_i^k$ for all $x\in T^{d_2}(B_i^k)$.
        \item Label the lowest unlabeled floor in $A_i^k$ with a $3$ and continue.
        \item Continue in this manner until all floors of $A_i^k$ are labeled with a value from $\{1,2,\ldots, c\}$.
    \end{itemize}
\end{definition}

Note that by assuming the properties in Remark~\ref{Remark: KR levels} hold, we guarantee that any two floors in different columns { which are the same number of floors from the bottom and} below the $\max p(x)$ floor will have the same labeling. Suppose that $F$ is the maximal floor in $A_i^k$ that is labeled $l$; then for all $x,y\in F$, the label of the floor containing $S(x)$ is the same as the label of the floor containing $S(y)$.  Hence, for $k$ large enough so that the properties in Remark~\ref{Remark: KR levels} hold, define $\pi_i^{(k)}(l)$ to be the label of the floor containing $S(x)$ for all $x$ in the floor of maximal height in $A_i^k$ labeled $l$.

\begin{proposition}
    If $(X,S)$ is a minimal speedup of $(X,T)$ with jump function $p:X\to \N$, then each $\pi_i^{(k)}$ is a permutation of the set $\{1,2,\ldots, c\}$ {  where $c$ is the orbit number of the speedup $p$ and $c \leq \max_{x\in X} p(x)$}.
\end{proposition}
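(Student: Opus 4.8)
The plan is to show that $\pi_i^{(k)}$ is a bijection of $\{1,\dots,c\}$; since this is a self-map of a finite set it suffices to establish injectivity, and in fact I will exhibit $\pi_i^{(k)}$ as a composite of two bijections. Fix the tower $A_i^k$ and, for each label $l$, let $F_l = T^{e_l}(B_i^k)$ be the floor of maximal height in $A_i^k$ labeled $l$ (the exit floor of strand $l$). Because $p$ is constant on each floor for $k$ large (Remark~\ref{Remark: KR levels}), $S(F_l) = T^{e_l + p(F_l)}(B_i^k)$, and since $F_l$ is maximal with its label we have $e_l + p(F_l) \ge h_i^k$. Writing $s_l := e_l + p(F_l) - h_i^k$ and using $T^{h_i^k}(B_i^k)\subseteq B(k)$, the image $S(F_l)$ lies in the base collar $T^{s_l}(B(k))$ with $0\le s_l < M := \max_{x\in X} p(x)$. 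By the consistency noted just before the proposition (floors at equal height below the $M$-th level carry the same label), $\pi_i^{(k)}(l)$ equals the common label of the collar level at height $s_l$, so it is enough to understand the assignment $l\mapsto s_l$.

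First I would pin down the labels on the collar. A floor $F=T^m(B_i^k)$ begins a new strand, i.e.\ receives a fresh label in the construction of $\mathcal{L}_k$, exactly when $S^{-1}(F)\notin A_i^k$ (otherwise $F$ inherits the label of $S^{-1}(F)$). Since $S^{-1}(F)=T^{m-p(S^{-1}(F))}(B_i^k)$ drops below the base precisely when $p(S^{-1}(F))>m$, and $p\le M$, every such strand-start has height $m<M$; thus all entry floors lie in the collar. Ordering them $0=d_1<d_2<\dots<d_c<M$, an easy induction on the labeling shows that the lowest unlabeled floor at each stage is the next entry floor, so the entry floor at height $d_m$ is labeled $m$. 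In particular $d_m\mapsto m$ is a bijection from the $c$ entry heights onto $\{1,\dots,c\}$, and $\pi_i^{(k)}(l)$ is the index of the strand whose entry height equals $s_l$ — provided $s_l$ is itself an entry height.

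It then remains to prove the two facts that make $l\mapsto s_l$ a bijection onto $\{d_1,\dots,d_c\}$, and both follow from injectivity of the homeomorphism $S$. For distinctness: if $s_l=s_{l'}$ then $e_l+p(F_l)=e_{l'}+p(F_{l'})$, so $S(F_l)=T^{e_l+p(F_l)}(B_i^k)=S(F_{l'})$, forcing $F_l=F_{l'}$ and hence $l=l'$. For landing on an entry floor: suppose the collar level at height $s_l$ were hit from below within a tower, say $S(T^{s'}(B_{i'}^k))=T^{s_l}(B_{i'}^k)$ with $s'<s_l$ in some tower $A_{i'}^k$ meeting $T^{h_i^k}(B_i^k)$. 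Then the nonempty set $T^{s_l}\!\left(T^{h_i^k}(B_i^k)\cap B_{i'}^k\right)$ would lie in both $S(F_l)$ and $S(T^{s'}(B_{i'}^k))$; applying $S^{-1}$ places its preimage simultaneously in $F_l=T^{e_l}(B_i^k)$ and in $T^{s'}(B_{i'}^k)$, two disjoint floors (different towers when $i\neq i'$, and different heights when $i=i'$, since $s'<M\le e_l$ for $k$ large), a contradiction. Hence each $s_l$ is an entry height, $l\mapsto s_l$ is an injection of $\{1,\dots,c\}$ into the $c$-element set $\{d_1,\dots,d_c\}$, and composing with $d_m\mapsto m$ shows $\pi_i^{(k)}$ is a permutation.

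The main obstacle is this last fact, that the image of an exit floor arrives at an entry floor rather than partway up an existing strand. The difficulty is that $T^{h_i^k}(B_i^k)$ need not be a single base piece but spreads across $B(k)$, so $S(F_l)$ is distributed over the height-$s_l$ collar levels of several towers; the collar consistency of the labeling is what lets me read off a single value $\pi_i^{(k)}(l)$, while injectivity of $S$ is what rules out the image landing at a non-entry level. Everything else is bookkeeping about the construction of $\mathcal{L}_k$ together with the bound $h_i^k>M$ from Remark~\ref{Remark: KR levels}, which I may strengthen to $h_i^k\ge 2M$ by starting the sequence of KR-partitions at a sufficiently large $k$.
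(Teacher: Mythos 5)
The paper states this proposition without proof, so there is nothing of the authors' to compare your argument against; what you have written supplies a proof where the paper omits one. Your core mechanism is sound and, in the setting the paper actually works in (aperiodic Toeplitz flows, whose KR-partitions have at least two towers at every level), it is complete: exits land in the collar $\bigcup_{0\le m<M}T^m(B(k))$ because $e_l+p(F_l)\ge h_i^k$ and $p\le M$; injectivity of $S$ makes $l\mapsto s_l$ injective; injectivity again forces each landing level to be an entry level; and the height-ordering of entry floors turns $\pi_i^{(k)}$ into an injection of a finite set into a set of the same size, hence a bijection.

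Two points should be addressed before this is airtight. First, your biconditional ``$F$ begins a new strand exactly when $S^{-1}(F)$ is not a floor of $A_i^k$,'' and with it the exhaustiveness of your ``hit from below'' case analysis, silently excludes a wrap-around possibility: $S(T^{s'}(B_{i'}^k))$ can equal the floor $T^{s_l}(B_{i'}^k)$ either with $s'+p = s_l$ (your case) or with $s'+p = s_l+h_{i'}^k$. The latter forces $T^{h_{i'}^k}(B_{i'}^k)=B_{i'}^k$, which by minimality makes $\bigcup_{0\le j<h_{i'}^k}T^j(B_{i'}^k)$ a clopen $T$-invariant set equal to $X$, i.e.\ the partition consists of a single tower. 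This cannot happen for the aperiodic subshifts that are the paper's subject, but the proposition as stated covers odometers, where it does happen; you should dispose of it explicitly. A clean way that also lets you drop the biconditional: any floor occupying position $j\ge 1$ of a strand has $S$-preimage equal to the position-$(j-1)$ floor of that same strand, so if the landing floor at height $s_l$ were such a floor, injectivity of $S$ and the disjointness of floors would force $F_l$ to be the position-$(j-1)$ floor, whence $l=l'$ and $j-1$ is the last position of strand $l$ --- contradicting that position $j$ then violates the cumulative-sum cutoff $\sigma_j<h_i^k$. Second, a bookkeeping point: when you order the entry heights as $0=d_1<\dots<d_c<M$ you are assuming the construction produces exactly $c$ strands per tower. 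Your argument actually proves that $\pi_i^{(k)}$ permutes the set of labels used; that this set is $\{1,\dots,c\}$ with $c$ the orbit number is presupposed by the paper's definition of $\mathcal{L}_k$ (it is essentially \cite[Lemma 2.3]{AAO}), so you are importing that identification rather than proving it, and it is worth saying so.
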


We now list some known results about speedups of odometers and substitutions that will be helpful in our study of Toeplitz flows. Recall { that for an odometer there exists a KR-partition which consists of a single tower.}

\begin{lemma}\cite[Lemma 3.7]{AAO}\label{Lemma: cyclic permutations}
    Suppose $(X,T) = (X_\alpha,T_\alpha)$ is an odometer and $(X_\alpha,S)$ is a bounded speedup of $(X_\alpha,T_\alpha)$. Then $(X_\alpha, S)$ is minimal if and only if for all $k$ sufficiently large, $\pi^{(k)}$ is a cyclic permutation on $\{1,2,\ldots, c\}$, where $c$ is the orbit number.
\end{lemma}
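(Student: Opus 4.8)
\section*{Proof proposal}

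The plan is to prove both implications by tracking how a single $S$-orbit traverses the (unique) tower of $\mathcal{P}(k)$. Since $(X_\alpha,T_\alpha)$ is an odometer, each $\mathcal{P}(k)$ consists of a single tower $A_1^k$, so $N(k)=1$ and there is exactly one permutation $\pi^{(k)}:=\pi_1^{(k)}$ of $\{1,\dots,c\}$, a bijection by the preceding Proposition. The linchpin is a description of the labeling $\mathcal{L}_k$, read off directly from its definition: each label class (the floors carrying a fixed label $l$) is a block $g_0^{(l)}, S(g_0^{(l)}), S^2(g_0^{(l)}),\dots, F_l$ obtained by iterating $S$ \emph{upward} without leaving the tower, where $g_0^{(l)}$ is the lowest floor of the class and $F_l$ the maximal floor labeled $l$. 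Applying $S$ to $F_l$ exits the top of the tower and, by definition of $\pi^{(k)}$, lands in a floor labeled $\pi^{(k)}(l)$; the key point to verify is that this landing floor is precisely the \emph{bottom} $g_0^{(\pi^{(k)}(l))}$ of its class. I would establish this by noting that $S^{-1}$ of a class bottom cannot lie below it (else the two floors would share a label), hence lies above it, so every class bottom is reached by a wrap from the top of some class. Granting this, an $S$-orbit that enters class $l$ sweeps it from its entry floor up to $F_l$, then passes to the bottom of class $\pi^{(k)}(l)$ and sweeps that class in full, and so on; thus the labels visited form the forward $\pi^{(k)}$-orbit of the initial label, and after at most one partial class every subsequent class is swept completely.

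For the implication that cyclicity forces minimality, suppose $\pi^{(k)}$ is a $c$-cycle for all sufficiently large $k$. Fix such a $k$ and any $x\in X_\alpha$. Its forward $\pi^{(k)}$-orbit exhausts $\{1,\dots,c\}$, so by the traversal description the $S$-orbit of $x$ eventually returns to and fully sweeps every class, hence meets every floor of $\mathcal{P}(k)$. Since this holds for all large $k$, and each floor of a coarser $\mathcal{P}(k_0)$ is a union of floors of the finer $\mathcal{P}(k)$ by refinement, the $S$-orbit of $x$ meets every floor of every $\mathcal{P}(k)$. As the partitions $\mathcal{P}(k)$ form a basis for the topology, the orbit is dense, so $(X_\alpha,S)$ is minimal.

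For the converse I would argue the contrapositive. Suppose $\pi^{(k)}$ fails to be a $c$-cycle for some $k$ large enough that Remark~\ref{Remark: KR levels} applies. Then $\pi^{(k)}$ has a cycle whose support $S_0$ is a nonempty proper subset of $\{1,\dots,c\}$, so $S_0$ is invariant under both $\pi^{(k)}$ and its inverse. Choosing $x$ in a floor whose label lies in $S_0$, the traversal description shows that both the forward and backward $S$-orbit of $x$ visit only labels in $S_0$, hence only floors carrying those labels. The union of floors labeled outside $S_0$ is a nonempty clopen set (every label in $\{1,\dots,c\}$ is realized, $c$ being the orbit number) that the entire $S$-orbit of $x$ avoids; so that orbit is not dense and $(X_\alpha,S)$ is not minimal. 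Combining the two directions yields the equivalence.

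The main obstacle is the linchpin lemma: establishing that class tops map exactly to class bottoms, so that the $S$-dynamics on the tower decomposes cleanly into the $\pi^{(k)}$-dynamics on labels together with a full upward sweep of each class. Once this is secured, both implications are short. A secondary point to handle carefully is the ``partial first class'' in the forward direction: an orbit may enter its initial class above the bottom, but cyclicity of $\pi^{(k)}$ guarantees it later re-enters that class at its bottom and completes the sweep, so no floor is permanently missed.
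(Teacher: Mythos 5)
The paper offers no proof of this lemma to compare against: it is imported verbatim from \cite[Lemma 3.7]{AAO} as a cited result. Judged on its own merits, your argument is correct and is the natural tower-traversal proof one would expect behind the citation: your linchpin claim does hold, since a floor labeled $m$ reaches any higher floor of the single tower without its cumulative jumps attaining the height $p_k$, so an in-tower $S$-predecessor of a class bottom would force that bottom to inherit an earlier label, and hence class tops wrap precisely onto class bottoms, making the $S$-dynamics factor over $\pi^{(k)}$ with full sweeps in between; both implications then follow exactly as you say, with the quantifiers handled correctly (all large $k$ for density via the basis property, a single bad large $k$ for the contrapositive).
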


The following theorem is a combination of several results from \cite{AAO}.

\begin{theorem}\cite[Theorem 3.8]{AAO}\label{Theorem: Odometer Speedup Characterization}
    If $(X,T) = (X_\alpha,T_\alpha)$ is an odometer with $\alpha = (\alpha_1,\alpha_2,\ldots)$, then there is a minimal bounded speedup with orbit number $c>1$ if and only if there is some $N\in \N$ such that $\gcd(c,\alpha_i) = 1$ for all $i\geq N$. Further, in this case the minimal speedup will be topologically conjugate to the original odometer.
\end{theorem}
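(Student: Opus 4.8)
The plan is to analyze how the orbit-labeling permutation $\pi^{(k)}$ of the single-column Kakutani--Rokhlin partitions evolves as $k$ increases, and to read off the arithmetic condition from the requirement that each $\pi^{(k)}$ be a cycle. Since $(X_\alpha,T_\alpha)$ is an odometer, every $\mathcal{P}(k)$ consists of one tower of height $p_k = \alpha_1\cdots\alpha_k$, and $\mathcal{P}(k+1)$ is obtained by stacking $\alpha_{k+1}$ identical copies of the $\mathcal{P}(k)$-tower. First I would establish the structural identity
\begin{equation*}
  \pi^{(k+1)} = \left(\pi^{(k)}\right)^{\alpha_{k+1}}
\end{equation*}
(up to a relabeling of $\{1,\dots,c\}$). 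This is proved by tracking the canonical labeling through the $\alpha_{k+1}$ stacked copies: because $p$ is constant on $\mathcal{P}(k)$-levels and each copy of the $\mathcal{P}(k)$-tower sits inside a single $\mathcal{P}(k)$-level, the internal $S$-dynamics inside each copy agrees with the level-$k$ dynamics, while each crossing from one copy to the next realizes exactly one application of $\pi^{(k)}$. Following the label-$1$ orbit of the base through all $\alpha_{k+1}$ copies therefore composes $\pi^{(k)}$ with itself $\alpha_{k+1}$ times before the orbit returns to the base region, which is precisely the content of the displayed identity.

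With this identity in hand, the characterization follows from the elementary fact that a power $\sigma^{d}$ of a $c$-cycle $\sigma$ is again a $c$-cycle if and only if $\gcd(d,c)=1$, together with the preceding lemma that $(X_\alpha,S)$ is minimal if and only if $\pi^{(k)}$ is cyclic for all large $k$. For the ``only if'' direction, if $(X_\alpha,S)$ is a minimal speedup with orbit number $c$, then $\pi^{(k)}$ and $\pi^{(k+1)}$ are both $c$-cycles for all large $k$; applying the identity forces $\gcd(\alpha_{k+1},c)=1$ for all such $k$, giving the required $N$. For the ``if'' direction I would construct a speedup directly: assuming $\gcd(c,\alpha_i)=1$ for all $i\ge N$, I would define a jump function on the level-$N$ column whose induced labeling realizes a chosen $c$-cycle $\pi^{(N)}$, and extend it consistently to finer levels. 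The identity then propagates cyclicity upward --- since each $\gcd(\alpha_{k+1},c)=1$, every $\pi^{(k)}$ with $k \ge N$ is a $c$-cycle --- so the preceding lemma yields a minimal speedup with orbit number $c$.

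For the conjugacy statement, I would exploit that cyclicity of $\pi^{(k)}$ means the $S$-orbit of the base $B(k)$ sweeps through all $c$ label classes before returning, and hence visits every one of the $p_k$ floors of the level-$k$ column exactly once. Consequently the partitions $\mathcal{P}(k)$, reindexed along the $S$-orbit of $B(k)$, are single-column Kakutani--Rokhlin partitions for $(X_\alpha,S)$ of heights $p_k$, with the same nested bases $B(k)$. This identifies $(X_\alpha,S)$ as an odometer whose period structure is again $(p_k)$, that is, the $\alpha$-adic odometer, so $(X_\alpha,S)$ is topologically conjugate to $(X_\alpha,T_\alpha)$.

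I expect the main obstacle to be the rigorous verification of the permutation identity $\pi^{(k+1)}=(\pi^{(k)})^{\alpha_{k+1}}$, since it requires careful bookkeeping of the labeling across copy boundaries and a check that the boundary-crossing transition coincides with the top-wraparound transition that defines $\pi^{(k)}$. A secondary difficulty is the explicit construction of a valid (bijective, bounded) jump function realizing a prescribed $c$-cycle at level $N$ in the ``if'' direction; here one must ensure that the induced $S$ is a homeomorphism with orbit number exactly $c$, which is where the coprimality hypothesis is genuinely used to rule out orbit collapse.
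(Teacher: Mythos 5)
You should first be aware that the paper does not prove this statement at all: it is imported verbatim from \cite{AAO} (``a combination of several results from \cite{AAO}''), so there is no internal proof to compare against, and I can only assess your argument on its own terms. On those terms your skeleton is correct, and both obstacles you flag at the end can be closed. For the key identity $\pi^{(k+1)} = (\pi^{(k)})^{\alpha_{k+1}}$, you can avoid the floor-by-floor bookkeeping entirely: since $p$ is constant on the floors of $\mathcal{P}(k)$ and $T^{p_k}(B(k)) = B(k)$ for an odometer, we have $p \circ T^{p_k} = p$, hence $S \circ T^{p_k} = T^{p_k} \circ S$. Thus $T^{p_k}$ maps $S$-orbits to $S$-orbits inside each $T$-orbit and so induces a permutation $\beta_k$ of the $c$ orbit classes; unwinding the definition of the orbit labeling shows $\pi^{(k)} = \beta_k^{-1}$ (the wraparound lands exactly one $T^{p_k}$-translate lower), and $p_{k+1} = \alpha_{k+1}p_k$ gives $\beta_{k+1} = \beta_k^{\alpha_{k+1}}$, hence the identity holds on the nose --- one can check that the level-$k$ and level-$(k+1)$ labelings enumerate the same orbit classes in the same order, so no relabeling is even needed. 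Combined with \cite[Lemma 3.7]{AAO} and the cycle-power fact, this settles the ``only if'' direction exactly as you describe.

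For the ``if'' direction, the construction you defer is genuinely needed but easy: on the level-$N$ tower set $p \equiv 1$ on floors $0,\dots,p_N-2$ and $p \equiv (c-1)p_N + 1$ on the top floor. This $S$ is a homeomorphism (it carries floor $j$ onto floor $j+1$, and the top floor onto $B(N)$ via $T^{cp_N}$), each $T$-orbit splits into exactly $c$ $S$-orbits (the blocks $[mp_N,(m+1)p_N)$ grouped by $m$ modulo $c$), and for every sufficiently large $k$ the permutation $\beta_k$ is translation by $p_k/p_N$ modulo $c$ on these classes, a $c$-cycle by the coprimality hypothesis; minimality then follows from \cite[Lemma 3.7]{AAO}. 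Note that the naive candidate $S = T^c$ fails here precisely because the finitely many indices with $\gcd(c,\alpha_i)>1$ can destroy minimality of $T^c$, so your closing remark that coprimality must do real work in avoiding orbit collapse is on target --- it is the reason a non-constant jump function is unavoidable. Your conjugacy argument is also sound and applies to an arbitrary minimal bounded speedup: cyclicity of $\beta_k$ forces the $S$-first-return time to $B(k)$ to be constantly $p_k$, and the $S$-tower over $B(k)$ is the partition $\mathcal{P}(k)$ reindexed, so $(X_\alpha,S)$ carries the same nested single-tower KR structure with heights $(p_k)$ and is conjugate to $(X_\alpha,T_\alpha)$. Your overall strategy is, incidentally, exactly the machinery the present paper deploys when it generalizes this result in Theorem~\ref{Theorem: Generalize Odometers To Toeplitz}: prescribe orbit labelings at one level and propagate cyclicity upward.
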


We note that the Toeplitz flows we are interested in studying are each symbolic shift spaces. It thus follows that any minimal speedup of a Toeplitz flow will necessarily result in a system that is not conjugate to the original system.

\begin{theorem}\cite[Theorem 4.6]{AAO}\label{Theorem: symbolic speedups not conjugate}
    If $(X,T)$ is a minimal subshift on an infinite set $X$ and $(X,S)$ is a minimal bounded speedup of $(X,T)$, then $(X,T)$ and $(X,S)$ are not conjugate.
\end{theorem}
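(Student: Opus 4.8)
The plan is to argue by contradiction. Suppose $\phi\colon(X,T)\to(X,S)$ is a conjugacy, i.e.\ a homeomorphism with $\phi\circ T=S\circ\phi$, so that $\phi\circ T^n=S^n\circ\phi$ for all $n\in\Z$. First I would dispose of the degenerate case $c=1$: then each $T$-orbit is a single $S$-orbit, so identifying a fixed $T$-orbit with $\Z$ via $n\leftrightarrow T^nx$, the homeomorphism $S$ becomes the increasing bijection $n\mapsto n+p(T^nx)$ of $\Z$ whose single orbit is all of $\Z$; since $p\geq 1$ this forces consecutive values to differ by exactly $1$, hence $p\equiv 1$ on a dense orbit and, by continuity, $S=T$. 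Thus a genuine speedup has orbit number $c\geq 2$, and this is what I must contradict. The structural fact to exploit is that $\phi$ carries each $T$-orbit bijectively onto a single $S$-orbit, inducing a bijection between the set of $T$-orbits and the set of $S$-orbits; but the set of $S$-orbits is a $c$-to-one refinement of the set of $T$-orbits, since each $T$-orbit splits into exactly $c$ distinct $S$-orbits. Everything comes down to showing this $c$-to-one discrepancy is incompatible with a conjugacy.

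In the positive-entropy case this is immediate: topological entropy is a conjugacy invariant, and for the constant speedup $h_{\mathrm{top}}(T^c)=c\,h_{\mathrm{top}}(T)$, with an Abramov-type identity giving the same factor $c$ in general (the average jump of a minimal bounded speedup against an invariant measure equals the orbit number $c$). Hence $h_{\mathrm{top}}(S)=c\,h_{\mathrm{top}}(T)>h_{\mathrm{top}}(T)$ when $h_{\mathrm{top}}(T)>0$, and no conjugacy exists. However, the Toeplitz flows motivating this paper have zero entropy, and for the same reason every \emph{coarse} invariant is useless: word-complexity grows at the same rate for $T$ and $T^c$, and in the weakly mixing case the system is uniquely ergodic with trivial maximal equicontinuous factor, so neither measure-theoretic counts nor spectral invariants distinguish $T$ from its speedup. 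The theorem therefore requires a finer invariant, one that is sensitive to the time change.

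The invariant I would use is the \emph{asymptotic-pair structure}. For the shift $T=\sigma$, two distinct points are $T$-asymptotic precisely when they agree on a right-infinite tail, and an infinite minimal subshift always has such pairs. Uniform continuity of $\phi$ shows that $\phi\times\phi$ sends $T$-asymptotic pairs to $S$-asymptotic pairs and conversely, so it conjugates the relation $(\mathcal{R},\,T\times T)$ to $(\mathcal{R}_S,\,S\times S)$; in particular the number of orbits of asymptotic pairs is a conjugacy invariant. One then checks (straightforward for the constant speedup) that the $S$-asymptotic and $T$-asymptotic relations coincide as subsets $\mathcal{R}\subseteq X\times X$, and that on $\mathcal{R}$ the action $S\times S$ refines $T\times T$ exactly $c$-to-one. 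Comparing the counts gives $\#\{(T\times T)\text{-orbits}\}=\#\{(S\times S)\text{-orbits}\}=c\cdot\#\{(T\times T)\text{-orbits}\}$, forcing $c=1$ whenever this number is finite and nonzero. To transport the relation explicitly I would represent $\phi$ as a generalized block code built from the expansiveness of $\sigma$, namely $\phi(x)_k=\Phi\big(x_{[a_k(x)-r,\,a_k(x)+r]}\big)$ with $a_k(x)=\sum_{j<k}p(S^jx)$ the cumulative jumps.

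The main obstacle is the asymptotic-orbit count itself. For a general minimal subshift there may be infinitely many asymptotic-pair orbits, so a bare cardinality does not suffice; I would replace it by a density count organized along the Kakutani--Rokhlin towers, using exactly the orbit-labeling bookkeeping developed above to detect the factor $c$. The second delicate point, and where I expect the real work, is verifying that the $T$- and $S$-asymptotic relations genuinely coincide when $p$ is non-constant: for two $T$-asymptotic points $x,y$ the cumulative jumps $a_k(x)$ and $a_k(y)$ need not stay synchronized, and I would have to prove their difference remains bounded, so that the sped-up orbits of $x$ and $y$ stay asymptotic and the $c$-to-one refinement descends to the quotient, before the counting contradiction can be concluded.
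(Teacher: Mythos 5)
This statement is quoted in the paper from \cite[Theorem 4.6]{AAO} without proof, so there is no in-paper argument to compare against; judged on its own terms, your proposal has genuine gaps in its main (zero-entropy) branch. The central structural error is the claim that on the asymptotic relation ``the action $S\times S$ refines $T\times T$ exactly $c$-to-one.'' It does not. The diagonal $S\times S$-orbit of a pair $(x,y)$ is $\{(T^{a_n(x)}x,\,T^{a_n(y)}y)\}_{n\in\Z}$, and unless the cocycle sums satisfy $a_n(x)=a_n(y)$ for every $n$ (equivalently $p(S^nx)=p(S^ny)$ for all $n\in\Z$), this orbit is not even contained in the diagonal $T\times T$-orbit $\{(T^nx,T^ny)\}_{n\in\Z}$; the two orbit relations on $X\times X$ are transverse, so the identity $\#\{(S\times S)\text{-orbits}\}=c\cdot\#\{(T\times T)\text{-orbits}\}$ has no basis. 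Relatedly, the claimed coincidence of the $S$- and $T$-asymptotic relations as subsets of $X\times X$ is false for non-constant $p$: a tail-agreeing pair $(x,y)$ is $S$-asymptotic only if the cumulative jumps synchronize \emph{exactly} from some point on (a persistent bounded nonzero offset would, in the limit, produce a periodic point, contradicting aperiodicity), and whether they synchronize depends on whether the two forward $S$-orbits fall into the same one of the $c$ orbit classes --- precisely the structure you are trying to count. What survives is only a correspondence between $T$-asymptotic and $S$-asymptotic pairs up to shifting one coordinate by a power of $T$, which destroys the clean ``refinement'' bookkeeping. So the difficulty you flag at the end is not merely to prove boundedness of $a_k(x)-a_k(y)$; the synchronization you need is generally false, and the counting framework built on it collapses.

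The second gap is the finiteness issue, which you acknowledge but do not resolve. Even granting a corrected correspondence carrying the factor $c$, the conclusion ``$c=1$'' only follows when the number of asymptotic-pair orbits is finite and nonzero. The theorem is asserted for every infinite minimal subshift, and for such systems this count can be infinite (any positive-entropy minimal subshift has uncountably many asymptotic pairs, and there are zero-entropy examples with infinitely many classes); your proposed substitute, a ``density count organized along the Kakutani--Rokhlin towers,'' is never defined, so no actual conjugacy invariant is produced for the main case. The parts of the proposal that do work are peripheral: the reduction showing $c=1$ forces $p\equiv 1$ (hence $S=T$) is correct, and the entropy argument is fine when $h_{\mathrm{top}}(T)>0$ (modulo proving the Abramov-type identity $h_{\mathrm{top}}(S)=c\,h_{\mathrm{top}}(T)$, which you assert rather than prove), but these do not touch the Toeplitz-type systems the theorem is needed for in this paper.
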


{ 
\begin{lemma}\cite[Lemma 4.14]{AAO}
   For a  minimal bounded speedup of a minimal substitution subshift $(X,T)$ there exists $K\geq 1$ such that $\pi_i^{(K)} = \pi_i^{(jK)}$ for all $1\leq i \leq |\mathcal{A}|$ and all $j\in \N$.
\end{lemma}
}
Let $\theta$ be a constant length, left proper, primitive substitution, { because of the previous lemma,} we may assume that $K$ is chosen large enough to satisfy Remark~\ref{Remark: KR levels}. Additionally, because the system generated by $\theta$ is conjugate to the system generated by $\theta^k$, without loss of generality we assume that $\theta$ satisfies all of these hypotheses with $k=1$ and $K=1$.

\begin{definition}\label{Definition: tau}
    Let $\theta$ be a constant length, left proper, primitive substitution on the alphabet $\mathcal{A}$. 
    Set $C = \{1,2,\ldots, c\}$ and consider the alphabet $\mathcal{A}\times C$. Define $\tau: \mathcal{A}\times C\to (\mathcal{A}\times C)^*$ by $$\tau(a,\ell) = (a_1,\ell_1)(a_2,\ell_2)\cdots(a_n,\ell_n)$$ where $\theta(a) = a_1a_2\cdots a_n$, $\ell_1 = \ell$, and $\ell_{k+1} = { \pi_{a_k}}(\ell_k)$ for each $k\geq 1$.
\end{definition}

\begin{lemma}\cite[Lemma 4.15]{AAO}\label{Lemma: tau is primitive}
    Suppose $\theta$ is a (left) proper, primitive, aperiodic substitution on $\mathcal{A}$ and $(X,T)$ is the subshift generated by $\theta$. Suppose $(X,S)$ is a bounded speedup of $(X,T)$ with orbit number $c$. Then $(X,S)$ is minimal if and only if $\tau$ is primitive.
\end{lemma}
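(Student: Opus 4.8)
The plan is to show that minimality of $(X,S)$ is equivalent to primitivity of $\tau$ by relating both conditions to the combinatorics of how orbit labels propagate through iterated substitutions. First I would unpack what $\tau$ does: applying $\tau$ to a letter $(a,\ell)$ reproduces $\theta(a)$ in the first coordinate while tracking, in the second coordinate, how the orbit label $\ell$ evolves as we read across the word $\theta(a)$ using the permutations $\pi_{i_k}$. Iterating, $\tau^k(a,\ell)$ has first coordinate $\theta^k(a)$ and second coordinate recording the orbit labels assigned by $\mathcal{L}_k$ to the floors of the tower $A_a^k$, starting the labeling from level $\ell$. The key conceptual point is that $\tau$ encodes precisely the orbit-labeling data of the speedup at every scale, so primitivity of $\tau$ should translate into a statement that every orbit label is eventually reachable from every other, which is exactly the mixing of $S$-orbits needed for minimality.

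The forward direction ($(X,S)$ minimal implies $\tau$ primitive) I would argue as follows. Since $\theta$ is primitive, for some $n$ every $\theta^n(a)$ contains every letter of $\mathcal{A}$, so the first coordinate of $\tau$ is already as primitive as $\theta$; the only obstruction to primitivity of $\tau$ is whether all the pairs $(b,m)$ appear in high powers $\tau^N(a,\ell)$. Minimality of $(X,S)$ means all $c$ of the $S$-orbits into which each $T$-orbit decomposes are dense, and by the odometer-style lemma quoted above this forces the composed permutations to cycle through all of $C$. Concretely, I would use the proposition that each $\pi_i^{(k)}$ is a permutation together with minimality to show that for a fixed letter $b$ appearing in $\theta^N(a)$, as we vary the starting label $\ell$ and the position, the induced labels $\ell_k$ sweep out all of $C$; thus every $(b,m)$ occurs in some $\tau^N(a,\ell)$, giving primitivity.

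For the converse ($\tau$ primitive implies $(X,S)$ minimal), I would observe that $\tau$ is itself a primitive substitution on the alphabet $\mathcal{A}\times C$, so the subshift $(X_\tau,\sigma)$ it generates is minimal. The projection onto the first coordinate is a factor map onto $(X,T)$, and I would argue that the second-coordinate labeling glues the $c$ disjoint $S$-orbits within each $T$-orbit together in a way that makes the $S$-dynamics correspond to the shift on $X_\tau$; minimality of $X_\tau$ then transfers to density of every $S$-orbit. The main obstacle I anticipate is the bookkeeping in this correspondence: one must verify that the orbit labels produced by $\tau$ at level $k$ agree with the orbit-labeling function $\mathcal{L}_k$ on the genuine KR-partition of $(X,S)$, i.e. that the recursive rule $\ell_{k+1}=\pi_{i_k}(\ell_k)$ in Definition~\ref{Definition: tau} faithfully records when the $S$-map carries one level of $\mathcal{P}(k)$ to the next versus when it jumps to a higher tower, which is precisely the content built into the definition of $\mathcal{L}_k$ and $\pi_i^{(k)}$. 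Establishing this compatibility carefully — using the reduction that $K=1$ so the permutations stabilize at the first level — is where the real work lies, after which both implications follow from the standard equivalence between primitivity of a substitution and minimality of its subshift.
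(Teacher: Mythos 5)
The genuine gap is in your forward direction, ``$(X,S)$ minimal $\Rightarrow \tau$ primitive.'' Your conclusion — that every $(b,m)$ occurs in \emph{some} $\tau^N(a,\ell)$ as the starting label $\ell$ and position vary — has the wrong quantifiers: primitivity demands that every $(b,m)$ occur in $\tau^N(a,\ell)$ for \emph{every fixed} $(a,\ell)$, and varying $\ell$ changes which letter of $\mathcal{A}\times C$ you are substituting, so no amount of sweeping over starting labels can yield that statement. The tool you invoke is also not the right one: the cited odometer lemma \cite[Lemma 3.7]{AAO} concerns KR-partitions with a \emph{single} tower, whereas here the label reached at a given occurrence of $b$ inside $\theta^N(a)$ depends on the entire path of towers traversed, not on cyclicity of any one $\pi_i$. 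What is missing is the uniform recurrence consequence of minimality, which is exactly how this paper proves the corresponding direction of its S-adic generalization, Lemma~\ref{Lemma: generalize tau is primitive} (the statement you are proving is itself only cited from \cite[Lemma 4.15]{AAO}, so that generalization is where the paper's proof technique actually appears): minimality of $(X,S)$ gives an $N$ such that \emph{every} $S$-orbit block of length $N+1$ meets every floor of $\mathcal{P}(l)$ with every label; since the jump function is bounded by $M=\sup_{x}p(x)$, any $\tau^k(a,\ell)$ with $|\theta|^k\geq MN$ corresponds to an $S$-orbit segment of at least $N$ steps, and hence contains every letter of $\mathcal{A}\times C$ — uniformly in the starting pair $(a,\ell)$. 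Without this compactness/uniformity input (or an equivalent), the forward implication does not go through.

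Your converse direction, ``$\tau$ primitive $\Rightarrow (X,S)$ minimal,'' is essentially workable and parallels the paper's argument for Lemma~\ref{Lemma: generalize tau is primitive}, where primitivity forces each level-$k$ word of the decomposition of an arbitrary $x\in X$ to contain every floor of $\mathcal{P}(l)$ with every label, whence every $S$-orbit is dense. One correction, though: the $S$-dynamics does \emph{not} ``correspond to the shift on $X_\tau$.'' The shift on $X_\tau$ covers $T$ (with labels carried along); $S$ corresponds to the induced jump onto the positions carrying a fixed label. What your gluing argument actually needs is that the $S$-orbit of $x$ visits precisely the positions whose label equals that of position $0$, together with the observation that, by primitivity, every word of $X$ occurs in the language of $X_\tau$ with \emph{every} possible initial label (the label of a fixed occurrence is a bijective function of the label fed into $\tau$, so varying that input sweeps all of $C$). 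With that, minimality of $(X_\tau,\sigma)$ does transfer to density of every $S$-orbit; this is the bookkeeping you correctly flagged as the real work.
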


\begin{theorem}\cite[Theorem 4.18]{AAO}
    Suppose $(X,T)$ is a minimal substitution system associated with a (left) proper primitive substitution $\theta$. If $(X,S)$ is a minimal bounded speedup of $(X,T)$, then $(X,S)$ is a minimal substitution system.
\end{theorem}

We now generalize Definition~\ref{Definition: tau} and Lemma~\ref{Lemma: tau is primitive} from the substitution setting to the more general S-adic setting for Toeplitz flows.
\begin{definition}\label{Definition: generalize tau}
    Let $(\mathcal{A}_i)_{i\geq 0}$ be a sequence of alphabets and $(\theta_i)_{i\geq 1}$ be a sequence of constant length, left proper, primitive substitutions such that $\theta_i: \mathcal{A}_i \to\mathcal{A}_{i-1}^*$. Set $C = \{1,2,\ldots, c\}$ and consider the sequence of alphabets $((\mathcal{A}_i\times C)_{i\geq 0})$. { For $k>l$ define} $\tau_k^l: \mathcal{A}_k\times C \to (\mathcal{A}_l\times C)^*$ by
    $$
    \tau_k^l(a,\ell) = (a_1,\ell_1)(a_2,\ell_2)\cdots (a_n,\ell_n)
    $$
    where $\theta_{l+1}\circ\theta_{l+2}\circ \cdots \circ \theta_k(a) = a_1a_2\cdots a_n$, $\ell_1 = \ell$, and $\ell_{j+1} = { \pi_{a_j}}(\ell_j)$ for each $j\geq 1$.
\end{definition}
\begin{lemma}\label{Lemma: generalize tau is primitive}
    Let $(X,T)$ be the Toeplitz flow generated by the sequences $(\mathcal{A}_i)$ and $(\theta_i)$. Suppose $(X,S)$ is a bounded speedup of $(X,T)$ with orbit number $c$. Then $(X,S)$ is minimal if and only if for every $l\in \N$ there is a $k\geq l$ such that $\tau_k^l(a,j)$ contains every letter from $\mathcal{A}_l\times C$ where $a\in \mathcal{A}_k$ and $j\in C$ are arbitrarily chosen.
\end{lemma}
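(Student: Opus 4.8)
The plan is to reinterpret the word $\tau_k^l(a,j)$ as a record of the $S$-dynamics inside a single level-$k$ tower, and then translate minimality of $(X,S)$ into a combinatorial covering statement about these words, exactly paralleling the substitution case of Lemma~\ref{Lemma: tau is primitive}. Recall that the level-$k$ tower $A_a^k$ decomposes, via the nesting of the KR-partitions, into a bottom-to-top stack of level-$l$ subtowers $A_{a_1}^l, A_{a_2}^l, \ldots, A_{a_n}^l$, where $\theta_{l+1}\circ\cdots\circ\theta_k(a) = a_1 a_2\cdots a_n$ and $n = p_k/p_l$. Using the orbit labeling and Remark~\ref{Remark: KR levels}, each tower is partitioned into $c$ ``strands,'' each strand being a maximal $S$-orbit segment climbing the tower, and the permutation $\pi_{a_m}^{(l)}$ records that an $S$-orbit reaching the top of strand $\ell$ of $A_{a_m}^l$ re-enters at the bottom of strand $\pi_{a_m}^{(l)}(\ell)$ of the next subtower $A_{a_{m+1}}^l$.

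The main step is to establish this dictionary rigorously. I would prove by induction on $k-l$ that the strand transitions are consistent across all intermediate levels: exiting the top of a level-$l$ subtower lands at the bottom of a strand of the subtower directly above it, and exiting the top of the entire tower $A_a^k$ lands at the bottom of a strand of the adjacent level-$k$ tower (the latter using that each $\pi_a^{(k)}$ is a genuine permutation). From this one concludes that $\tau_k^l(a,j)$ lists precisely the sequence of pairs (subtower type, strand) visited by the single $S$-orbit segment that enters $A_a^k$ at the bottom of strand $j$ and climbs to the top; in particular, ``$\tau_k^l(a,j)$ contains $(b,m)$'' means this segment passes through some copy of $A_b^l$ on strand $m$, hence visits \emph{every} floor of that strand. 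The bookkeeping here — verifying that the strand structure is compatible across levels and that the permutations compose correctly — is where I expect the real work to lie; it is the analogue, in the varying-alphabet setting, of the fact that $\tau$ is a genuine substitution in Definition~\ref{Definition: tau}.

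With the dictionary in hand I would reduce minimality to floor-covering: since the level-$l$ floors are clopen and form a basis for the topology, $(X,S)$ is minimal if and only if every $S$-orbit meets every strand of every level-$l$ tower. For the backward implication, assume the hypothesis and fix a point $x$, a level $l$, and a target strand $(b,m)$; pick $k$ as given. Tracking the forward $S$-orbit of $x$: each strand is finite, so after finitely many steps the orbit reaches the top of its current strand and, by the consistency above, $S$ carries it to the bottom of a strand $j$ of a level-$k$ tower $A_{a'}^k$; it then performs the full traversal recorded by $\tau_k^l(a',j)$, which by hypothesis contains $(b,m)$, so the orbit meets the target floor. Hence every $S$-orbit is dense and $(X,S)$ is minimal.

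For the forward implication I would argue by compactness. Assume $(X,S)$ is minimal and fix $l$ and a target strand $(b,m)$ with associated clopen floor $F$. Since $(X,S)$ is a minimal homeomorphism of a compact space, every forward $S$-orbit is dense, so $\bigcup_{t\geq 0} S^{-t}F = X$; as each $S^{-t}F$ is clopen and the union increases to $X$, compactness yields a uniform bound $N$ so that every point reaches $F$ within $N$ forward steps. Because the $\theta_i$ have constant length, every level-$k$ tower has the same number $n = p_k/p_l$ of level-$l$ subtowers, and each subtower crossing costs at least one $S$-step; choosing $k$ with $p_k/p_l > N$ forces the bounded-time visit to $F$, from the bottom of any strand $j$ of any $A_a^k$, to fall within the traversal recorded by $\tau_k^l(a,j)$. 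Thus $\tau_k^l(a,j)$ contains $(b,m)$ for every $a$ and $j$, and taking the maximum over the finitely many letters of $\mathcal{A}_l\times C$ produces a single $k\geq l$ witnessing the stated condition.
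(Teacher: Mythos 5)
Your proposal is correct and follows essentially the same route as the paper's proof: the backward direction by letting an orbit perform full traversals of level-$k$ towers, each of which contains every (level-$l$ tower, label) pair, and the forward direction via the uniform visiting bound $N$ supplied by minimality and compactness, choosing $k$ so that the traversal of a level-$k$ tower outlasts $N$ steps. The differences are cosmetic: you make explicit the tower/strand dictionary that the paper uses implicitly, and your length bound $p_k/p_l > N$ (at least one $S$-step per level-$l$ subtower) replaces the paper's bound $|\tau_k^l(a,j)| \geq MN$ with $M = \sup_{x\in X} p(x)$.
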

\begin{proof}
    Suppose for all $l\in \N$ there exists a $k\geq l$ such that $\tau_k^l(a,j)$ contains every letter from $\mathcal{A}_l\times C$, where $a\in \mathcal{A}_k$ and $j\in C$ are arbitrarily chosen. Let $x\in X$ and consider the decomposition of $x$ into $\theta_1\circ\theta_2\circ \cdots \theta_k$-words.  Within each $\theta_1\circ\theta_2\circ\cdots\circ \theta_k$-word is every possible $\theta_1\circ\theta_2\circ\cdots \theta_l$-word with every possible label.  Thus, the $S$-orbit of $x$ intersects all floors of $\mathcal{P}(l)$. Because this is true for all $l\in \N$, the $S$-orbit of $x$ is dense.

    Conversely, suppose that $S$ is minimal.
    There is an $N$ such that any $S$-orbit block of length $N+1$ intersects every floor of $\mathcal{P}(l)$ with all labels. Set $M = \sup_{x\in X} p(x)$ and let $k>l$ be chosen such that $|\tau_k^l(a,j)| \geq MN$ for all $(a,j)\in \mathcal{A}_k\times C$. Then we may conclude that every letter $(a',j')\in \mathcal{A}_l\times C$ appears in $\tau_k^l(a,j)$ for each $(a,j)\in \mathcal{A}_k\times C$.
\end{proof}

\section{The Speedup of a Toeplitz Flow Need Not be Toeplitz}\label{sec: Not Toeplitz}

In this section we present an example of a Toeplitz flow $(X,T)$ and a jump function $p(x): X\to \N$ such that the speedup $(X,S) = (X,T^{p(x)})$ is minimal and not conjugate to a Toeplitz flow. This demonstrates that the conjecture in \cite{ADL} that the minimal speedup of a Toeplitz flow is Toeplitz is not true.

\begin{eg}\label{Example: New Non Example}
    Consider the primitive substitution $\theta$ on the alphabet $\mathcal{A} = \{a,b\}$ given by $\theta(a) = ab$ and $\theta(b) = aa$. Note that the subshift $(X_\theta, \sigma)$ is a Toeplitz flow. Let $A = [\theta^3(a)]= \{x\mid x_{[0,7]} = abaaabab\}$ and define $p:X_\theta \to \Z^+$ by $$ p(x) = \begin{cases}
    3 & \text{if } x\in A \\
    1 & \text{if } x\in \sigma(A)\\
    2 & \text{else.}
    \end{cases}$$ With this given jump function $p$, $(X_\theta,S)$ is a minimal bounded speedup of $(X_\theta,\sigma)$ where $S = \sigma^{p(x)}(x)$. In this case, the orbit number is $c=2$.
\end{eg}

\begin{remark}\label{Remark: Classical example substitutions}
    For $(X_\theta,\sigma)$, the tower $A_1^k$ is given by $\theta^k(a)$ and the tower $A_2^k$ is given by $\theta^k(b)$. For all $k\geq 3$, the permutation on the first tower is given by $\pi_1^{(k)}(1) = 2$ and $\pi_1^{(k)}(2)=1$, whereas the permutation on the second tower is given by $\pi_2^{(k)}(1)=1$ and $\pi_2^{(k)}(2) = 2$. Because the permutation depends only on the tower and not on the level of the KR-partition, we may simply use $\pi_1(i)$ and $\pi_2(i)$ for all KR partitions $\mathcal{P}(k)$.

    Also observe that the height of orbit $j$ in tower $A_i^k$ is given by $o_i^k(j) = 2^{k-1}$ for all $i,j\in \{1,2\}$ and for all $k\geq 3$. In particular, the height of each orbit in each tower of $\mathcal{P}(3)$ is $4$. To see this more clearly, let $A = aba$, $B = aa$, $C = ba$, $D=b$, and $E=ab$. Then tracing through the first orbit of the tower $A_1^3$ is equivalent to tracing through $ABCC$. Tracing through the second orbit of the tower $A_1^3$ is equivalent to tracing through $DBEE$. Similarly, tracing through the first orbit of the tower $A_2^3$ is equivalent to tracing through the $EBEB$ and tracing through the second orbit of $A_2^3$ is equivalent to tracing through $CBCB$. 

    In order to understand the speedup $(X_\theta, S)$, we first define $\mathcal{C} = \{A,B,C,D,E\}$ and $\mathcal{B} = \{w_1,w_2,w_3,w_4\}$.  We then define two substitutions as follows:
    \begin{itemize}
    \item Define $\phi: \mathcal{B}\to \mathcal{B}^*$ by $\phi(w_1) = w_1w_2w_3w_4$, $\phi(w_2) = w_1w_2w_4w_3$, $\phi(w_3) = w_1w_2w_3w_4w_3$, and $\phi(w_4) = w_1w_2w_4$. 
    \item Define $\psi:\mathcal{B}\to \mathcal{C}^*$ by $\psi(w_1) = ABCCCBCBDBEE$, $\psi(w_2) = ABCCDBEEEBEB$, $\psi(w_3) = ABCCCBCBDBEEEBEB$, and $\psi(w_4) = ABCCDBEE$.
    \end{itemize}
    By using the return word method of Durand \cite{Durand}, it can be shown that the speedup $(X_\theta,S)$ is topologically conjugate to the shift space defined by applying $\psi$ to the fixed point of the substitution $\phi$ and then taking the closure of the shift on this new point. We denote this space $(X_\psi, \sigma)$.
    \end{remark}
    
    { In \cite{Durand}, Durand uses return words to recode substitution subshifts $(X_\theta, \sigma)$. A word $w$ in the language of $X_\theta$ is a {\em return word} to the word $u$ if $wu$ is in the language of $X$, $u$ is a prefix and suffix of $wu$, and there are no other occurrences of $u$ in $w$. 
    Taking the fixed point $x$ of the substitution subshift $(X_\theta, \sigma)$, we rewrite it into the jump words like $A,B,C,D,E$ in Remark~\ref{Remark: Classical example substitutions} and look for return words to the letter $A$. The return words obtained for $(X_\theta,S)$  are $w_1,w_2,w_3$ and $w_4$, and by recoding $\theta(\psi(w_i))$ into the return words we find the substitution $\phi$.}

    \begin{theorem}\label{Theorem: Example not Toeplitz}
        The speedup $(X_\theta,S)$ defined in Example~\ref{Example: New Non Example} is not a Toeplitz flow.
    \end{theorem}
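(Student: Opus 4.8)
The plan is to invoke Downarowicz's characterization (stated above): a symbolic minimal system is a Toeplitz flow if and only if it is an almost one-to-one extension of an odometer, which, combined with the $\mathrm{Aper}$-characterization and minimality, is equivalent to the system containing a Toeplitz sequence. By Remark~\ref{Remark: Classical example substitutions} the speedup $(X_\theta,S)$ is conjugate to $(\psi(X_\phi),\sigma)$, and ``being Toeplitz'' is a conjugacy invariant, so I would work entirely inside the substitution system $(\psi(X_\phi),\sigma)$ and show it has no Toeplitz point.

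First I would identify its maximal equicontinuous factor from the Kakutani–Rokhlin data. The level-$n$ towers of $(\psi(X_\phi),\sigma)$ have $S$-heights $g^{(n)}=(M^\top)^n g^{(0)}$, where $M$ is the incidence matrix of $\phi$ and $g^{(0)}=(12,12,16,8)$ records the $\mathcal{C}$-lengths $|\psi(w_j)|$. A short computation gives $g^{(n)}_1=g^{(n)}_2$ and $g^{(n)}_3-g^{(n)}_1=4^{n+1}=-(g^{(n)}_4-g^{(n)}_1)$, so all between-column height differences are $\pm$ powers of $2$. Hence every continuous eigenvalue $e^{2\pi i t}$ must satisfy $t\cdot 4^{n+1}\to 0\pmod 1$, forcing $t\in\mathbb{Z}[1/2]$; together with the complementary sufficiency estimate this shows the maximal equicontinuous factor is the $2$-adic odometer $(X_\alpha,T_\alpha)$ with $\alpha=(2,2,\dots)$ — the very odometer underlying the original flow $(X_\theta,\sigma)$. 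Thus the factor map $g\colon\psi(X_\phi)\to X_\alpha$ exists, and it remains only to decide whether $g$ is almost one-to-one.

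The crux, and where the orbit number $c=2$ enters decisively, is to show that $g$ is \emph{nowhere} one-to-one: every fiber $g^{-1}(z)$ contains at least two points. The speedup splits each $\sigma$-orbit into $c=2$ distinct $S$-orbits carrying the same odometer address while remaining symbolically distinct; concretely I would use the orbit-labeling of the KR-partitions (the paths $ABCC$/$DBEE$ in the first tower and $EBEB$/$CBCB$ in the second) to produce, for each point $y$, a distinct proximal partner $y'$ agreeing with $y$ on longer and longer central windows but never equal to $y$. Since the factor onto the equicontinuous odometer collapses exactly the proximal relation, this gives $g^{-1}(g(y))\supsetneq\{y\}$ for all $y$; hence no point has a singleton fiber, no point is a Toeplitz sequence, and $(\psi(X_\phi),\sigma)$ — and therefore $(X_\theta,S)$ — is not Toeplitz.

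The main obstacle is this last step: proving the fibers are non-singletons \emph{uniformly}, for every point rather than merely for the canonical fixed point, equivalently that $\mathrm{Aper}(y)\neq\emptyset$ for all $y$. This requires controlling both preimages over every odometer address simultaneously, which I would handle by exploiting the uniform branching encoded in $\phi$ together with its recognizability, so that the ``choice'' distinguishing the two $S$-orbits recurs with bounded gaps along a non-periodic set of positions. A secondary technical point is verifying that the eigenvalue group is all of $\mathbb{Z}[1/2]$, so that the maximal equicontinuous factor is genuinely the full $2$-adic odometer and the proximal relation coincides with its fibers; this follows from the standard sufficiency criterion once the height differences above are in hand.
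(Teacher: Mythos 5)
Your overall architecture is legitimate: by Downarowicz's characterization, a minimal subshift is Toeplitz iff it is a symbolic almost one-to-one extension of an odometer, and since any odometer over which the system is almost one-to-one must coincide with its maximal equicontinuous factor, it does suffice to show that the factor map onto the maximal equicontinuous factor has no singleton fiber. Your height computation $(12,12,16,8)\cdot 4^n$ and the identification of the eigenvalue group as $\Z[1/2]$ are correct in substance (though the sufficiency half quietly invokes eigenvalue criteria for linearly recurrent systems, e.g.\ Cortez--Durand--Host--Maass, machinery the paper never needs).

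The genuine gap is the step you yourself flag as ``the main obstacle'': showing that every fiber of $g$ contains at least two points. Given your reduction, this claim is not an auxiliary lemma --- it is logically equivalent to the theorem itself, since a point of a minimal subshift whose maximal equicontinuous factor is an odometer is Toeplitz exactly when its fiber is a singleton. Deferring it to ``exploiting the uniform branching encoded in $\phi$ together with its recognizability'' leaves the entire content of the proof unproved. Moreover, the heuristic you offer is flawed in three places: (i) the claim that the two $S$-orbits inside one $\sigma$-orbit ``carry the same odometer address'' is unjustified --- the maximal equicontinuous factor map of $(X_\theta,S)$ is not that of $(X_\theta,\sigma)$, and the relation between the two (mediated by the coboundary $p(x)-2$) is exactly what would need to be analyzed; (ii) a partner $y'$ ``agreeing with $y$ on longer and longer central windows but never equal to $y$'' cannot exist, since agreement on every central window forces $y'=y$; what you need is proximality, i.e.\ agreement on long windows centered along the orbit, a different construction that is never carried out; (iii) ``the factor onto the equicontinuous odometer collapses exactly the proximal relation'' is precisely the almost-automorphy property whose failure you are trying to establish --- in general the fibers are regionally proximal cells; only the direction ``proximal $\Rightarrow$ same fiber'' is free, and you never produce the proximal pairs that would feed it. By contrast, the paper's proof does exactly this missing work, combinatorially: assuming a Toeplitz point $y$ exists, the letter $A$ in the $\mathcal{C}$-decomposition of $y$ must recur with some fixed period $t$; since $\psi(\phi^k(w_2))$ occurs in that decomposition for every $k$ and the blocks $\psi(\phi^k(w_i))$ have lengths $12\cdot 4^k$, $12\cdot 4^k$, $16\cdot 4^k$, $8\cdot 4^k$ arranged so that the occurrences of $w_3,w_4$ break any period smaller than $6\cdot 4^{k+1}$, one gets $t\geq 6\cdot 4^k$ for every $k$, a contradiction. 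Some concrete argument of this type (or an equivalent analysis of which tower-position sequences are compatible with a single odometer address) is unavoidable, and your proposal does not contain it.
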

    \begin{proof}
        Suppose that $(X_\theta,S)$ is a Toeplitz flow. Then without loss of generality, there exists some word $y\in X_\theta$ that is Toeplitz under $S$. Without loss of generality, we may assume that the decomposition of $y$ into $\mathcal{C}$-words begins with $y_{[0,8]}=ABCC$. It follows that the decomposition of $y$ into these $\mathcal{C}$-words is a Toeplitz sequence. 
        Hence there exists some $t\in \N$ such that $A$ appears in the $\mathcal{C}$-decomposition of $y$ in every $t$ positions. Further, because $y$ is necessarily minimal and the substitution $\phi$ is recognizable, $\psi(\phi^k(w_2))$ appears in this decomposition of $y$ for every $k\in \N$. Hence, $A$ must appear in $\psi(\phi^k(w_2))$ exactly every $t$ positions for every $k\in \N$.

\newcommand\scalemath[2]{\scalebox{#1}{\mbox{\ensuremath{\displaystyle #2}}}}

        Observe the following $\psi(\phi)$ words (formatted with spaces to ease identifying the locations of $A$). \[\psi(\phi(w_1)) = 
        \scalemath{.9}{ABCCCBCBDBEE~ABCCDBEEEBEB~ ABCCCBCBDBEEEBEB~ABCCDBEE}\]
        $$\psi(\phi(w_2)) = \scalemath{.9}{ABCCCBCBDBEE~ABCCDBEEEBEB~ABCCDBEE~ABCCCBCBDBEEEBEB}$$
        \begin{align*}
        \psi(\phi(w_3)) &= \scalemath{.9}{ABCCCBCBDBEE~ABCCDBEEEBEB~ABCCCBCBDBEEEBEB~ABCCDBEE} \\
        &\quad \ \scalemath{.9}{ABCCCBCBDBEEEBEB}
        \end{align*}
        $$\psi(\phi(w_4) = \scalemath{.9}{ABCCCBCBDBEE~ABCCDBEEEBEB~ABCCDBEE \qquad\qquad\qquad\qquad\qquad\qquad \quad }$$
        By inspection, it necessarily follows that $t\geq 24$.  Additionally, we can look at the $\psi(\phi^2)$ words.
        \begin{align*}
            \psi(\phi^2(w_1))& = \psi(w_1w_2w_3w_4w_1w_2w_4w_3w_1w_2w_3w_4w_3w_1w_2w_4) \\
            & = \scalemath{.9}{ABCCCBCBDBEE~ABCCDBEEEBEB~ABCCCBCBDBEEEBEB~ABCCDBEE} \\
            &\quad \ \scalemath{.9}{ABCCCBCBDBEE~ABCCDBEEEBEB~ABCCDBEE~ABCCCBCBDBEEEBEB}\\
             &\quad \ \scalemath{.9}{ABCCCBCBDBEE~ABCCDBEEEBEB~ABCCCBCBDBEEEBEB~ABCCDBEE} \\
             &\quad \ \scalemath{.9}{ABCCCBCBDBEEEBEB ~ ABCCCBCBDBEE~ABCCDBEEEBEB~ABCCDBEE}
        \end{align*}
         \begin{align*}
            \psi(\phi^2(w_2))& = \psi(w_1w_2w_3w_4w_1w_2w_4w_3w_1w_2w_4w_1w_2w_3w_4w_3) \\
            & = \scalemath{.86}{ABCCCBCBDBEE~ABCCDBEEEBEB~ABCCCBCBDBEEEBEB~ABCCDBEE} \\
            &\quad \ \scalemath{.86}{ABCCCBCBDBEE~ABCCDBEEEBEB~ABCCDBEE~ABCCCBCBDBEEEBEB}\\
             &\quad \
             \scalemath{.86}{ABCCCBCBDBEE~ABCCDBEEEBEB~ABCCDBEE~
             ABCCCBCBDBEE} \\
             &\quad \ \scalemath{.86}{ABCCDBEEEBEB~ABCCCBCBDBEEEBEB~ABCCDBEE~ ABCCCBCBDBEEEBEB} 
        \end{align*}
        \begin{align*} 
            \psi(\phi^2(w_3))& = \psi(w_1w_2w_3w_4w_1w_2w_4w_3w_1w_2w_3w_4w_3w_1w_2w_4w_1w_2w_3w_4w_3) \\
             &  = \scalemath{.9}{ABCCCBCBDBEE~ABCCDBEEEBEB~ABCCCBCBDBEEEBEB~ABCCDBEE} \\
            &\quad \ \scalemath{.9}{ABCCCBCBDBEE~ABCCDBEEEBEB~ABCCDBEE~ABCCCBCBDBEEEBEB}\\
             &\quad \ \scalemath{.9}{ABCCCBCBDBEE~ABCCDBEEEBEB~ABCCCBCBDBEEEBEB~ABCCDBEE} \\
             &\quad \ \scalemath{.9}{ABCCCBCBDBEEEBEB ~ ABCCCBCBDBEE~ABCCDBEEEBEB~ABCCDBEE} \\
             &\quad \ \scalemath{.9}{ABCCCBCBDBEE~ABCCDBEEEBEB~ABCCCBCBDBEEEBEB~ABCCDBEE} \\
             &\quad \ \scalemath{.9}{ABCCCBCBDBEEEBEB},
        \end{align*}
        \begin{align*}
            \psi(\phi^2(w_4))& = \psi(w_1w_2w_3w_4w_1w_2w_4w_3w_1w_2w_4) \\
            & = \scalemath{.9}{ABCCCBCBDBEE~ABCCDBEEEBEB~ABCCCBCBDBEEEBEB~ABCCDBEE} \\
            &\quad \ \scalemath{.9}{ABCCCBCBDBEE~ABCCDBEEEBEB~ABCCDBEE~ABCCCBCBDBEEEBEB}\\
             &\quad \
             \scalemath{.9}{ABCCCBCBDBEE~ABCCDBEEEBEB~ABCCDBEE},
        \end{align*}
        
        By applying the same logic as above, we see that necessarily $t\geq 96$.
        Suppose that for some $k\geq 1$, $A$ appears every $t$ positions in $\psi(\phi^k(w_2))$ for $t = 6\cdot 4^k$. We now outline the following facts about $\psi\circ\phi^k$ and $\psi\circ\phi^{k+1}$.
        \begin{enumerate}
        \item Facts about the lengths of the $\psi(\phi^k(w))$ words:
        \begin{itemize}
            \item $|\psi(\phi^k(w_1))| = 6\cdot 4^k + 6\cdot 4^k$.
            \item $|\psi(\phi^k(w_2))| = 6\cdot 4^k + 6\cdot 4^k$
            \item $|\psi(\phi^k(w_3))| = 6\cdot 4^k + 6\cdot 4^k + 4\cdot4^k$
            \item $|\psi(\phi^k(w_4))| = 6\cdot 4^k + 2\cdot 4^k$
        \end{itemize}
        \item Facts about concatenation of words in $\psi(\phi^{k+1}(w_2))$:
        \begin{itemize}
            \item $\psi(\phi^{k+1}(w_2)) = \psi(\phi^k(w_1)\phi^k(w_2)\phi^k(w_4)\phi^k(w_3))$
            \item $|\psi(\phi^{k+1}(w_2))| = |\psi(\phi^k(w_1)\phi^k(w_2)\phi^k(w_4)\phi^k(w_3))| = 6\cdot 4^k + 6\cdot 4^k + 6\cdot 4^k + 6\cdot 4^k + 6\cdot 4^k + 2\cdot 4^k + 6\cdot 4^k + 6\cdot 4^k + 4\cdot 4^k $
            \item The alignment of $\psi(\phi^k(w_4)\phi^k(w_3))$ is such that $A$ does not appear at all multiples of $6\cdot 4^k$. It does hold that $A$ appears at all multiples of $6\cdot 4^{k+1}$ in $\psi(\phi^k(w_4)\phi^k(w_3))$. Observe that $A$ will also appear at all multiples of $6\cdot 4^{k+1}$ in $\psi(\phi^k(w_1w_2))$.
            \item We thus conclude that $A$ appears at all multiples of $6\cdot 4^{k+1}$ in $\psi(\phi^{k+1}(w_2))$. Therefore, it follows that if $A$ appears every $t$ positions in the $\mathcal{C}$-decomposition of $y$, then $t\geq 6\cdot 4^{k+1}$.
        \end{itemize}
        \end{enumerate}
        
        For each $k\in \N$, it follows that in order for $A$ to appear within $\psi(\phi^k(w_2))$ at every $t$ positions, { $t$ must be at least $6\cdot 4^k$}. Since $t$ is assumed to be a finite number that works for every $k\in \N$, we obtain a contradiction. It thus follows that the decomposition of $y$ into $\mathcal{C}$-words is not a Toeplitz sequence, and hence $y$ will not be a Toeplitz sequence under $S$. We conclude that no point in $X_\theta$ will be Toeplitz under $S$, and therefore $(X_\theta, S)$ is not a Toeplitz flow.
    \end{proof}

We now show that no minimal speedup of the Toeplitz flow $(X_\theta, \sigma)$, where $\theta$ is defined as in Example~\ref{Example: New Non Example}, with orbit number $c=2$ is a Toeplitz flow. 
    \begin{theorem}\label{Theorem: no speedup with c=2 is Toeplitz}
        Any minimal speedup of the Toeplitz flow $(X_\theta,\sigma)$ generated by the substitution $\theta(a) = ab$ and $\theta(b) = aa$ with orbit number $c=2$ { is not} a Toeplitz flow.
    \end{theorem}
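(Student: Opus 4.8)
The plan is to reduce every minimal speedup with orbit number $c=2$ to the single case already handled in Theorem~\ref{Theorem: Example not Toeplitz}. Since $c=2$, each orbit permutation $\pi_i^{(k)}$ is an element of $S_2 = \{e,(12)\}$, so at every sufficiently large level $k$ there are only four possible pairs $(\pi_1^{(k)},\pi_2^{(k)})$. The crucial point I would establish is that these pairs are \emph{not} free to vary independently from level to level: they are constrained by how the towers of $\mathcal{P}(k+1)$ are built from those of $\mathcal{P}(k)$. Because $\theta(a)=ab$ and $\theta(b)=aa$, the tower $A_1^{k+1}$ is $A_1^k$ stacked beneath $A_2^k$, while $A_2^{k+1}$ is $A_1^k$ stacked beneath a second copy of $A_1^k$. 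Tracking the label carried by an $S$-orbit as it ascends through these stacked subtowers yields the recursion
\begin{align*}
\pi_1^{(k+1)} &= \pi_2^{(k)}\circ \pi_1^{(k)}, \\
\pi_2^{(k+1)} &= \pi_1^{(k)}\circ \pi_1^{(k)},
\end{align*}
and one checks that the permutations computed for Example~\ref{Example: New Non Example} are a fixed point of this recursion, which is the consistency check I would run first.

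From here the argument is purely group-theoretic and exploits that $c=2$. In $S_2$ every element is an involution, so the recursion forces $\pi_2^{(k+1)}=(\pi_1^{(k)})^2 = e$ for every $k$; hence $\pi_2^{(k)}=e$ for all large $k$, and then $\pi_1^{(k+1)}=e\circ\pi_1^{(k)}=\pi_1^{(k)}$, so $\pi_1^{(k)}$ stabilizes to some fixed $\pi_1\in\{e,(12)\}$. Thus the stabilized orbit data is forced to be either $((e,e))$ or $((12),e)$, with no oscillation possible. In the first case the induced substitution $\tau$ of Definition~\ref{Definition: tau} sends every label-$1$ letter to a word of label-$1$ letters and every label-$2$ letter to a word of label-$2$ letters, so the sub-alphabets $\mathcal{A}\times\{1\}$ and $\mathcal{A}\times\{2\}$ are $\tau$-invariant; then $\tau$ is not primitive and, by Lemma~\ref{Lemma: tau is primitive}, $(X_\theta,S)$ is not minimal, contrary to hypothesis. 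Consequently any minimal speedup with $c=2$ must realize the stabilized data $((12),e)$.

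This stabilized data is exactly that of the speedup in Example~\ref{Example: New Non Example}. Since $\tau$ is determined by $\theta$ together with the stabilized permutations, the induced substitution—and hence, by \cite[Theorem 4.18]{AAO}, the speedup up to topological conjugacy—coincides with the system $(\psi(X_\phi),\sigma)$ analyzed in Remark~\ref{Remark: Classical example substitutions}. Being a Toeplitz flow is a conjugacy invariant, and Theorem~\ref{Theorem: Example not Toeplitz} shows this system is not Toeplitz, so neither is $(X_\theta,S)$. I expect the main obstacle to be the careful justification of the recursion for $(\pi_1^{(k+1)},\pi_2^{(k+1)})$—in particular, verifying that the label carried by an $S$-orbit composes cleanly as it crosses from one stacked subtower into the next, independently of the floor alignment—together with making precise the claim that the stabilized permutation pair determines the speedup up to conjugacy through $\tau$. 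Everything else is a finite case check made possible by the fact that $S_2$ consists only of involutions.
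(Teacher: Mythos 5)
Your first half is correct and matches the paper's own argument: the recursion $\pi_1^{(k+1)} = \pi_2^{(k)}\circ\pi_1^{(k)}$, $\pi_2^{(k+1)} = \left(\pi_1^{(k)}\right)^2$ coming from $\theta(a)=ab$, $\theta(b)=aa$ is exactly how the paper rules out the pair $(\mathrm{id},\mathrm{id})$ (not minimal) and the pair $((12),(12))$ (both permutations become the identity one level up, hence not minimal), and forces $\pi_a^{(k)} = (12)$, $\pi_b^{(k)} = \mathrm{id}$ for all sufficiently large $k$. Your elimination of the stabilized case $(e,e)$ via non-primitivity of $\tau$ and Lemma~\ref{Lemma: tau is primitive} is also fine.

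The gap is in your final reduction. You assert that since $\tau$ (hence the return-word substitution $\phi$) is determined by $\theta$ together with the stabilized permutations, the speedup itself is determined up to conjugacy and therefore coincides with the system of Example~\ref{Example: New Non Example}. That does not follow, and it cannot be made to follow: a jump function with $c=2$ and permutation data $((12),\mathrm{id})$ is \emph{not} determined by that data. It also carries the orbit heights $o_a^k(1), o_a^k(2), o_b^k(1), o_b^k(2)$, constrained only by $o_a^k(1)+o_a^k(2) = o_b^k(1)+o_b^k(2) = |\theta|^k$, and these enter the symbolic model through the encoding $\psi$, i.e.\ through the number of $S$-steps spent in each return word $w_i$. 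Example~\ref{Example: New Non Example} is special in that all orbit heights equal $2^{k-1}$; a general minimal speedup with $c=2$ need not satisfy this. The citation \cite[Theorem 4.18]{AAO} only says the speedup is \emph{some} minimal substitution system; it does not say that the substitution is determined by the permutations alone. That orbit heights genuinely matter is shown later in this very paper: in Example~\ref{Example: All speedups of ADL are same}, speedups of $\chi$ with identical permutation data are Toeplitz or not depending on whether $o_1(a)=o_1(b)$. The paper closes the gap you left open not by a conjugacy argument but by running the period argument with \emph{general} orbit heights: it computes $|w_1| = |\theta|^k + o_b^k(2)$, $|w_2| = |\theta|^k + o_b^k(1)$, $|w_3| = 2|\theta|^k$, $|w_4| = |\theta|^k$, observes that $|\phi(w_1)| = |\phi(w_2)| = 6|\theta|^k$, $|\phi(w_3)| = 8|\theta|^k$, $|\phi(w_4)| = 4|\theta|^k$ are independent of the unknown heights (the $o$-terms cancel because they sum to $|\theta|^k$), and then shows, exactly as in Theorem~\ref{Theorem: Example not Toeplitz}, that no finite return time $l$ to $w_1$ in the $w_i$-decomposition of a putative Toeplitz point can exist. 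Replacing your conjugacy step with this computation turns your outline into the paper's proof.
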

    \begin{proof}
        Let $p(x)$ be a jump function such that $(X_\theta, S)$ is a minimal bounded speedup of $(X_\theta,\sigma)$ with orbit number $c=2$. Choose a KR-partition  $\mathcal{P}(k)$ with $k$ chosen large enough such that the properties in Remark~\ref{Remark: KR levels} hold. Consider the permutations $\pi_a^{(k)}$ and $\pi_b^{(k)}$ induced by the speedup. Observe that if $\pi_a^{(k)}=\pi_b^{(k)} = \text{id}$, then the associated speedup would not be minimal.  Similarly, if $\pi_a^{(k)}(1) = \pi_b^{(k)}(1) = 2 $, the associated speedup will not be minimal as $\pi_a^{(k+1)}=\pi_b^{(k+1)} = \text{id}$. Thus, one of { the} tower permutations must be the identity, and  { the} other tower permutation must be the cyclical permutation. Observe that in either case, for all $k\geq K$ (there exists $K\in \N$), it follows that $\pi_a^{(k)}(1) = 2$ and $\pi_b^{(k)} = \text{id}$. One can check via Lemma~\ref{Lemma: tau is primitive} that the associated speedup will be minimal. 

        Let $o_a^k(i)$ denote the number of levels of the $\theta^k(a)$ tower in the KR-partition $\mathcal{P}(k)$ that are labeled with $i$ for each $i\in \{1,2\}$; similarly define $o_b^k(i)$. Observe that $o_a^k(1)+o_a^k(2) = |\theta^k|$ and $o_b^k(1) + o_b^k(2) = |\theta^k|$. Further note that the substitution $\phi: \mathcal{B}\to \mathcal{B}^\ast$ defined as in Remark~\ref{Remark: Classical example substitutions} is the same as the substitution defined by checking returns to $(a,1)$ in the substitution defined by $\tau$ in Definition~\ref{Definition: tau}. Here $\mathcal{B} = \{w_1 = (a,1)(b,2)(a,2), w_2=(a,1)(a,2)(b,1), w_3 = (a,1)(b,2)(a,2)(b,1), w_4=(a,1)(a,2)\}$, $\phi(w_1) = w_1w_2w_3w_4$, $\phi(w_2) = w_1w_2w_4w_3$, $\phi(w_3) = w_1w_2w_3w_4w_3$, and $\phi(w_4) = w_1w_2w_4$. Observe that when we consider the orbit heights of each orbit path, the length of $w_1$ is $o_a^k(1)+o_b^k(2)+o_a^k(2) = |\theta|^k + o_b^k(2)$, the length of $w_2$ is $o_a^k(1)+o_a^k(2)+o_b^k(1) = |\theta|^k + o_b^k(1)$, the length of $w_3$ is $o_a^k(1) + o_b^k(2) + o_a^k(2) + o_b^k(1) = 2|\theta|^k$, and the length of $w_4$ is $o_a^k(1) + o_a^k(2) = |\theta|^k$.

        We now follow the same logic as in the previous theorem to demonstrate that $(X_\theta, S)$ is not Toeplitz.
        Suppose that $(X_\theta, S)$ is Toeplitz. Then there is some sequence $x\in X_\theta$ that is Toeplitz under $S$. We can decompose $x$ uniquely into its $w_i$ representation. As $x$ is Toeplitz, there exists some $l\in \N$ such that if the decomposition of $x$ into $w_i$ words begins with $w_1$, then the decomposition of $S^{l\cdot n}(x)$ begins with $w_1$ for all $n\in \N$. By our substitution, $|\phi(w_1)| = |\phi(w_2)| = 6|\theta|^k$, $|\phi(w_3)| = 8|\theta|^k$,and $|\phi(w_4)| = 4|\theta|^k$.  Hence, any return between our desired occurrences of $w_1$ in the $w_i$-decomposition of $x$ gives $l \geq 8|\theta|^k$.  By looking at $\phi^2(w_1) = w_1w_2w_3w_4w_1w_2w_4w_3w_1w_2w_3w_4w_3w_1w_2w_4$, we obtain that $l\geq 12|\theta|^k$.  By looking at $\phi^3(w_1)$, we obtain that $l\geq 24|\theta|^k$. In general, we will never be able to find a single value $l$ such that our $w_i$-decomposition of $x$ returns to $w_1$ every $l$ steps. Therefore $(X_\theta, S)$ is not a Toeplitz flow.
    \end{proof}

    {
    The proofs of the prior two theorems are meant to provide a strategy that can be used to determine whether a given speedup is Toeplitz; this strategy will be formalized in the following section. We note that while the speedup $(X_\theta,S)$ in Example~\ref{Example: New Non Example} is not a Toeplitz flow, it is conjugate to the Thue-Morse substitution shift.
    
    \begin{theorem}
        The speedup $(X_\theta,S)$ defined in Example~\ref{Example: New Non Example} is conjugate to the shift space generated by the Thue-Morse substitution.
    \end{theorem}
    \begin{proof}
        Define $\mathcal{B} = \{w_1,w_2,w_3,w_4\}$, $\mathcal{C} = \{A,B,C,D,E\}$, and $\mathcal{D} = \{0,1\}$. Further, let $\phi: \mathcal{B}\to \mathcal{B}^\ast$ and $\psi: \mathcal{B}\to \mathcal{C}^\ast$ be as in Remark~\ref{Remark: Classical example substitutions} and define $\gamma: \mathcal{D}\to \mathcal{D}^\ast$ by $\gamma(0) = 01$ and $\gamma(1) = 10$ to be the Thue-Morse substitution. We 
        observe that one of the one-sided fixed points of the substitution $\gamma$ is $$\bfx= \lim_{n\to\infty}\gamma^n(0) = 0110 1001 1001 0110 1001 0110 0110 1001 \cdots.$$
        By defining $\bfu = \lim_{n\to \infty} \phi^n(w_1) = w_1w_2w_3w_4w_1w_2w_4w_3w_1w_2w_3w_4w_3 \cdots$ and applying $\psi$ letter-by-letter, we obtain
            $$\bfy = \psi(\bfu) = \psi(w_1)\psi(w_2)\psi(w_3)\cdots = ABCC CBCB DBEE ABCC DBEE EBEB \cdots.$$
        The associated shift spaces generated by these substitutions are the orbit closures of the sequences $\bfx$ and $\bfy$ under the shift map $\sigma$. 
        We denote by $X_\gamma$ the shift space generated by the Thue-Morse substitution, whereas $X_\psi$ is precisely the shift space generated by the speedup $(X_\theta,S)$.

        We define a function $F: \mathcal{C}^3\to \mathcal{D}$ that sends the allowed triples appearing in $X_\psi$ to either $0$ or $1$ according to the following chart.

        \begin{center}
        \begin{tabular}{cc|cc|cc|cc}
            \hline
            Triple & $F$ & Triple & $F$ & Triple & $F$ & Triple & $F$ \\
            \hline
            $ABC$ & 0 & $CCB$ & 0 & $BCC$ & 1 & $CCD$ & 1 \\
            $BAB$ & 0 & $CDB$ & 0 & $BDB$ & 1 & $DBE$ & 1 \\
            $BCB$ & 0 & $EBE$ & 0 & $BEB$ & 1 & $EAB$ & 1 \\
            $BEE$ & 0 & $EEA$ & 0 & $CBC$ & 1 & $EBA$ & 1 \\
            $CBD$ & 0 & $EEE$ & 0 & $CCC$ & 1 & $EEB$ & 1 \\
            \hline
        \end{tabular}
        \end{center}
        Then the desired conjugacy between $X_\psi$ and $X_\gamma$ is given by the sliding block code $\mathcal{F}: X_\psi \to X_\gamma$ defined by $\mathcal{F}(\bfy)_n = F(y_ny_{n+1}y_{n+2})$. We conclude that the speedup $(X_\theta,S)$ is conjugate to the Thue-Morse substitution shift space.
    \end{proof}

    As the Thue-Morse substitution shift is an almost 2-1 extension of the dyadic odometer, we have that the speedup $(X_\theta,S)$ from Example~\ref{Example: New Non Example} is an almost 2-1 extension of an odometer.
    }
    This leads to the following natural question.
    \begin{question}\label{Question: constant length substitutions no orbit number constant length}
        Suppose $(X_\theta, \sigma)$ is an arbitrary Toeplitz flow generated by the constant length substitution $\theta$. Is there any minimal speedup $(X_\theta, S)$ of $(X_\theta, \sigma)$ with orbit number $c = |\theta|$ such that $(X_\theta, S)$ is Toeplitz? { If not, will the resulting speedup be an almost $c$-1 extension of an odometer?} What if, more generally, the orbit number $c$ is such that $\gcd(c, |\theta|) \neq 1$?
    \end{question}

\section{Proving that a Speedup is (Not) Toeplitz}\label{Sec: Determining if Toeplitz}

{ In this section we characterize when a constant speedup $(X,T^c)$ of a Toeplitz flow $(X,T)$ is minimal, Toeplitz, and over the same underlying odometer. The key tool is the almost 1-1 extension structure of Toeplitz flows over odometers. We conclude the section with a strategy to determine when a non-constant minimal bounded speedup of a Toeplitz flow is Toeplitz.

\begin{theorem}\label{Theorem: constant speedup minimality characterization}
   Let $(X,T)$ be a Toeplitz flow with underlying odometer $(G,\tau)$ and period structure $(p_k)_{k\geq 1}$.
   \begin{enumerate}
       \item $(X,T^c)$ is minimal if and only if $\gcd(c,p_k)=1$ for all $k$.
       \item If $(X,T^c)$ is minimal, then it is a Toeplitz flow with the same underlying odometer as $(X,T)$.
       \item If $\gcd(c,p_k)>1$ for some $k$, then $(X,T^c)$ is not minimal. Each minimal component of $(X,T^c)$ is a mutually conjugate Toeplitz flow.
   \end{enumerate}
\end{theorem}
\begin{proof}
Let $h:X\to G$ denote the factor map from $(X,T)$ onto its maximal equicontinuous factor $(G,\tau)$; then $h$ is almost 1-1. As $h \circ T = 
\tau\circ h$, we have $h\circ T^c = \tau^c\circ h$, and thus $h$ is an almost 1-1 map from $(X,T^c)$ to $(G,\tau^c)$. Observe that $(G,\tau^c)$ is minimal if and only if $\gcd(c,p_k) = 1$ for all $k$, in which case $(G,\tau^c)$ is conjugate to $(G,\tau)$, and hence $(X,T^c)$ is a Toeplitz flow over the same odometer. If $\gcd(c,p_k)\neq 1$ for some $k$, then $(G,\tau^c)$ splits into finitely many mutually conjugate odometers, and thus $(X,T^c)$ splits into finitely many mutually conjugate Toeplitz flows.
\end{proof}


}

{ The following example demonstrates that while the underlying odometer is preserved by a constant speedup, the period structure may change. Specifically, $(p_k)$ need not be a period structure for $(X,T^c)$, even though both systems have the same underlying odometer. }

\begin{eg}\label{Example: period structure changes under speedup}
    Consider the primitive substitution $\theta$ on the alphabet $\mathcal{A} = \{a,b\}$ given by $\theta(a) = ab$ and $\theta(b) = aa$. The subshift $(X_\theta, T)$ is a Toeplitz flow with period structure $(p_k) = (2^k)$ for all $k\geq 1$. Consider the constant speedup $(X_\theta, T^c)$ where $c=3$. By Theorem~\ref{Theorem: constant speedup minimality characterization}, $(X_\theta, T^3)$ is a Toeplitz flow. One can verify that $(X_\theta, T^3)$ is conjugate to the substitution shift generated by $\gamma: \{A,B,C,D,E\}\to \{A,B,C,D,E\}^\ast$ given by $\gamma(A) = ABAC$, $\gamma(B) = ABDE$, $\gamma(C) = AEDC$, $\gamma(D) = ABDC$, and $\gamma(E) = AEDE$.  Observe that this substitution is once again constant length and left proper. One associated period structure for $(X_\theta, T^3)$ is $(q_k) = (4^k)$ for every $k\in \N$. In this case, we note that the underlying odometer for both $(X_\theta, T)$ and $(X_\theta,T^3)$ is the dyadic odometer; in fact $(q_k)$ is a period structure for $(X_\theta,T)$, even though $(p_k)$ is not a period structure for $(X_\theta, T^3)$.
\end{eg}

{ This example also demonstrates that a minimal constant speedup of a substitution subshift generated by a constant length and left proper substitution can be conjugate to a substitution subshift generated by another constant length and left proper substitution. We now show that this is true in general. 

\begin{theorem}\label{Theorem: constant speedup substitution}
    Let $\theta$ be a primitive, left proper, and constant $q$-length substitution on the alphabet $\mathcal{A}$ and $(X_\theta,T)$ be the associated substitution shift. If $\gcd(c,q)=1$, then $(X_\theta,T^c)$ is conjugate to a substitution shift $(X_\gamma,\sigma)$ generated by a proper, primitive, and constant length substitution $\gamma$.
\end{theorem}


\begin{proof}
Let $\mathcal{B}$ denote the set of all words of length $c$ appearing in the language of $(X_\theta,T)$, which will serve as the alphabet of the new substitution. Since $\theta$ is left proper and primitive, we may choose $k\in\mathbb{N}$ sufficiently large that $q^k\ge c$ and the word $\theta^k(a)$ begins with the same common prefix $w^*\in\mathcal{B}$ of length $c$ for every $a\in\mathcal{A}$.

For each letter $B=b_1b_2\cdots b_c\in\mathcal{B}$, consider the image $\theta^k(B)=\theta^k(b_1)\theta^k(b_2)\cdots\theta^k(b_c)$, which has length $c\cdot q^k$. Partition this word into $q^k$ consecutive non-overlapping blocks of length $c$, writing $\theta^k(B)=W_1W_2\cdots W_{q^k}$ with $|W_j|=c$ for each $j$. Each block $W_j$ is a subword of length $c$ in the language of $(X_\theta,T)$, so $W_j\in\mathcal{B}$. We define the substitution $\gamma:\mathcal{B}\to\mathcal{B}^*$ by $\gamma(B)=W_1W_2\cdots W_{q^k}$.
This substitution $\gamma$ is constructed to be proper, primitive, and constant length, as it inherits these properties from $\theta$ and the fact that $\gcd(c,q)=1$. As the above procedure was obtained as an adaptation of Durand's return word method, it is easy to verify that $(X_\theta,T^c)$ is conjugate to $(X_\gamma, \sigma)$.
\end{proof}


In general, a minimal speedup of a constant length substitution need not be conjugate to another constant length substitution. However, every example we have constructed (see also Section~\ref{Sec: Sufficient Conditions}) where the minimal speedup of a Toeplitz flow generated by a constant length left proper substitution is again a Toeplitz flow has the property that the resulting speedup can be represented as a constant length and left proper substitution, even when the jump function is not constant. This leads us to ask if this is always the case.}



\begin{question}
    If $(X,T)$ is a Toeplitz flow generated from a constant length and left proper substitution, and the speedup $(X,S)$ is also a Toeplitz flow, must there be a constant length representation of the substitution generating $(X,S)$?
\end{question}

 We now generalize the technique used in the proofs of Theorem~\ref{Theorem: Example not Toeplitz} and Thereom~\ref{Theorem: no speedup with c=2 is Toeplitz} into a strategy to determine whether a given minimal bounded speedup of a Toeplitz flow is Toeplitz. We present the technique in two versions: the constant length substitutive Toeplitz case and the general (non-substitutive) Toeplitz case.

\medskip

\newpage
{\bf { Strategy} to determine whether a speedup of a substitutive Toeplitz flow is Toeplitz:}
\begin{enumerate}
    \item Given the constant length and left proper substitution $\theta$ and the jump function $p(x)$, find the tower permutations $\pi_i^{(k)}$ for the speedup. Observe that we may choose $k$ large enough and consider the substitution $\theta^k$ so that without loss of generality, every KR-partition $\mathcal{P}(k)$ is such that the $i$th tower at every level has the same permutation and Remark~\ref{Remark: KR levels} is satisfied.
    \item Define the substitution $\tau: \mathcal{A}\times C \to (\mathcal{A}\times C)^\ast$ as in Definition~\ref{Definition: tau}. Note that this substitution can be rewritten as an equivalent substitution $\phi: \mathcal{B}\to \mathcal{B}^*$ by taking the fixed point of the substitution $\tau$ generated by $(a,1)$ and using Durand's method (see \cite{Durand}) to create a left proper substitution by looking at returns to $(a,1)$. 
    \item Define a substitution $\psi$ that encodes the individual jumps within each tower orbit into a new alphabet $\mathcal{C}$; this substitution extends to the $\phi$-words.
    \item Take a bi-infinite fixed sequence $x$ under $\phi$ and apply $\psi$. Then the shift space generated by $\psi(x)$ is conjugate to the speedup $(X,S)$. Note that $x$ can be found by extending the labeling rule defined in Definition~\ref{Definition: tau} to the bi-infinite fixed point of $\theta$ where we assume $\ell_0 = 1$, $\ell_k = \pi_{i_k}(\ell_{k-1})$ if $k\geq 1$ and $\ell_j = \pi_{i_j}^{-1}(\ell_{j+1})$ if $j\leq -1$. 
    \item Observe if $\psi\circ \phi$ has a constant length and left proper representation, then immediately $(X,S)$ is Toeplitz.
    \item Suppose that some point $y\in X$ is Toeplitz under $S$. There is a unique decomposition of $y$ into $\psi(\phi^k)$-words for every $k\in\N$. We call this decomposition $y'$. Since the $0$th coordinate of $y'$ appears every $p$ positions (there exists some $p\in\N$, since $y'$ is Toeplitz under the shift map), it follows that this symbol must appear every $p$ positions in every $\psi(\phi^k)$ word for every $k\in\N$.
    \item Thus, if one can show that no such $p$ can possibly exist (through checking larger and larger $k$ values), we obtain a contradiction and $(X,S)$ is not Toeplitz.
    \item If it follows that there is a $p\in \N$ such that the $0$th coordinate of $y'$ appears every $p$ positions in $y'$, then we set $p = p_1$ and we suppose there is some $p_2\in \N$ (necessarily a multiple of $p_1$) such that the word $y'_{[0,p_1-1]}$ appears every $p_2$ positions in every $\psi(\phi^k)$ word for every $k\in \N$ and continue recursively. If we can find some infinite sequence $(p_k)$ of integers, this will form our period structure for $y'$ under the shift map and we will have verified that $y$ is Toeplitz under $S$. If at any stage we cannot find a valid $p_k$, then the speedup will not be Toeplitz.
\end{enumerate}
\medskip

\newpage
{\bf { Strategy} to determine whether a speedup of a general Toeplitz flow is Toeplitz:}
    \begin{enumerate}
        \item Given the sequence of alphabets $(\mathcal{A}_i)$ and the sequence of constant length left proper substitutions $(\theta_i: \mathcal{A}_i\to \mathcal{A}_{i-1})$  that generate the Toeplitz flow $(X,T)$ and the jump function $p(x)$ for the speedup, find the tower permutations $\pi_i^{(k)}$ for the speedup for each KR-partition $\mathcal{P}(k)$.
        \item Define a substitution $\tau_{i+1}: \mathcal{A}_{i+1}\times I \to (\mathcal{A}_i\times I)^\ast$ by $\tau_{i+1}(A,j) = (b_1,j_1)(b_2,j_2)\cdots (b_n,j_n)$ where $\theta_{i+1}(A) = b_1, b_2\ldots b_n$, $j_1 = j$, and for all $i\geq 2$, $j_i = \pi_{b_{i-1}}^{(i)}(j_{i-1})$.
        \item Define a substitution $\psi_l$ that encodes the individual jumps within each tower orbit $(a,j)\in \mathcal{A}_l\times C$ into a new alphabet $\mathcal{C}$.
        \item Take a bi-infinite sequence $x\in X$ and decode it into into $\mathcal{A}_l\times C$-words and apply $\psi_l$; denote this new sequence as $x'$ and note that the shift space generated by $x'$ is conjugate to the speedup $(X,S)$. Note that if we choose $x\in X$ to be a Toeplitz sequence (under $T$), then the decomposition of $x$ into $\mathcal{A}_l\times C$ words can be found by extending the labeling rule defined in Definition~\ref{Definition: generalize tau} to $x$ where we assume $l_0 = 1$, $l_j= \pi_{i_j}(l_{j-1})$ if $j\geq 1$ and $l_j=\pi_{i_j}^{-1}(l_{j+1})$ if $j\leq -1$. Hence we may apply $\psi_l$ to this bi-infinite sequence and generate the same shift space conjugate to $(X,S)$.
        \item Suppose that some point $y\in X$ is Toeplitz under $S$. There is a unique decomposition of $y$ into $\psi_l(\mathcal{A}_l\times C)$-words for every $l\in \N$.
        We call this decomposition $y'$ and recall that the shift map on $y'$ acts in the same way that $S$ acts on $y$. Since the $0$th coordinate of $y'$ appears every $p$ positions (there exists $p\in \N$ since $y'$ is Toeplitz under the shift map), it follows that this symbol must appear every $p$ positions in every $\psi_l(\mathcal{A}_l\times C)$-word for every $l\in \N$.
        \item Thus, if one can show that no such $p$ can possibly exist (through checking larger and larger $l$ values), we obtain a contradiction and $(X,S)$ is not Toeplitz.
        \item If it follows that there is a $p\in \N$ such that the $0$th coordinate of $y'$ appears every $p$ positions in $y'$, then we set $p = p_1$ and we suppose there is some $p_2\in \N$ (necessarily a multiple of $p_1$) such that the word $y'_{[0,p_1-1]}$ appears every $p_2$ positions in every $\psi_l(\mathcal{A}_l\times C)$ word for every $l\in \N$ and continue recursively. If we can find some infinite sequence $(p_k)$ of integers, this will form our period structure for $y'$ under the shift map and we will have verified that $y$ is Toeplitz under $S$. If at any stage we cannot find a valid $p_k$, then the speedup will not be Toeplitz.
    \end{enumerate}

\newpage
\section{Sufficient Conditions for Obtaining Toeplitz Flows} \label{Sec: Sufficient Conditions}

{ Section~\ref{Sec: Determining if Toeplitz} characterized constant speedups of Toeplitz flows using the almost 1-1 extension structure over odometers and gave some insight into a strategy for determining whether a non-constant speedup of a Toeplitz flow is Toeplitz. In this section we focus on bounded speedups with non-constant jump functions. We investigate when non-constant speedups are again Toeplitz flows, when they are conjugate to a constant speedup, and what structural constraints the original flow imposes on the underlying odometer of the speedup. 
}

{ \subsection{Sufficient conditions for a speedup to be Toeplitz} We begin with sufficient conditions for a bounded speedup of a Toeplitz flow to be another Toeplitz flow. The first condition is stated in terms of the almost 1-1 extension structure; the second is a more general combinatorial translation in terms of the KR-partitions.}
Recall the following definition:
    for each $k\in \N$ large enough such that Remark~\ref{Remark: KR levels} holds and for each $j\in \{1,2,\ldots, c\}$, we let $o_i^k(j)$ denote the {\em height of orbit $j$ in tower $A_i^k$}; that is, $o_i^k(j)$ is precisely the number of floors in tower $A_i^k$ that are labeled $j$.

\begin{theorem}\label{Theorem: Sufficient Same Permutation and Orbits}
    { Let $(X,T)$ be a Toeplitz flow over $(G,\tau)$ with almost 1-1 factor map $h: X\to G$, and let $(X,S)$ be a minimal bounded speedup of $(X,T)$ with bounded jump function $p:X\to \N$ and orbit number $c$.}
    
    \begin{enumerate}
    {    \item If the level sets of $p$ are lifted from $G$ (i.e., $p=q\circ h$ for some locally constant $q: G\to \N$), then $(X,S)$ is a Toeplitz flow.}
        \item More generally, if for some sufficiently large $k$, the orbit permutation is the same on each tower of the KR-partition $P(k)$ and the orbit heights are the same in each tower (i.e., $o_i^k(j) = o_l^k(j)$ for each $j\in \{1,2,\ldots, c\}$ and all $i,l$), then $(X,S)$ is a Toeplitz flow.
    \end{enumerate}
    
\end{theorem}
\begin{proof}
    {We first prove (1). First note that the bounded speedup of a symbolic system is symbolic. As $p=q\circ h$ and $h\circ T = \tau \circ h$, the speedup on $X$ projects via $h$ to a speedup on $G$. To see this, define $U: G\to G$ by $U(g) =\tau^{q(g)}(g)$. Then $h(S(x)) = U(h(x))$ for all $x\in X$, so $h$ is a factor map from $(X,S)$ to $(G,U)$.
    
    Since $\tau$ is an isometry and $q$ is constant on clopen neighborhoods, the speedup $U$ preserves the metric structure of $G$. If $g$ and $g'$ are in the same clopen set on which $q$ is constant, then the distance between $U(g)$ and $U(g')$ is the same as the distance between $g$ and $g'$. It follows that $U$ is both equicontinuous and distal. Since $G$ is zero-dimensional, $(G,U)$ is a union of odometers. If $(G,U)$ is minimal, it is an odometer.
    
    Observe that the almost 1-1 property of $h$ is preserved, hence $(X,S)$ is an almost 1-1 extension of $(G,U)$. Hence, if $(X,S)$ is minimal, then $(G,U)$ is minimal and is thus an odometer. We conclude that $(X,S)$ is a minimal symbolic almost 1-1 extension of an odometer and hence it is Toeplitz.}

    For the proof of (2), first observe that if every tower has the same permutation for some fixed $k\in \N$, then we may denote $\pi_i^{(k)} = \pi$ for simplicity (when $k$ is understood). Also note that if $(X,S)$ is minimal, then $\pi$ must be a cyclic permutation on $\{1,2,\ldots, c\}$. If $\pi$ were not a cyclic permutation, then there would be some subcycle of length $\ell < c$ such that $\pi^\ell (1) = \pi \circ \cdots \circ \pi(1) = 1$. In this case we would have points in $X$ that do not enter into every orbit labeling, which contradicts the minimality of $(X,S)$.  We thus have that $\pi^c(j) = j$ for all $j\in \{1,2,\ldots, c\}$. By the construction of the KR-partitions, the orbit labeling, and the fact that $(X,T)$ is a Toeplitz flow, it will follow that for every $k'\geq k$, each tower in $\mathcal{P}(k')$ will have the same permutation.

    Now suppose for some $k\in\N$ we have that for every pair of towers $A_i^k$ and $A_l^k$ and every orbit label $j\in \{1,2,\ldots, c\}$ it holds that $o_i^k(j) = o_l^k(j)$. Because $(X,T)$ is Toeplitz, $\mathcal{P}(k+1)$ has precisely $N(k+1)$ towers each formed by stacking $p_{k+1}/p_k$ towers from level $\mathcal{P}(k)$ on top of each other (where $(p_k)$ is the period structure for $(X,T)$), and each tower in $\mathcal{P}(k)$ has the same permutation and the same orbit labeling, it follows that each tower in $\mathcal{P}(k+1)$ will also have the same orbit labeling. That is, for all $k'>k$, for any two towers $A_i^{k'}$ and $A_l^{k'}$ and any orbit label $j\in \{1,2,\ldots, c\}$ it holds that $o_i^{k'}(j)=o_l^{k'}(j)$. 

    Fix $k\in \N$ and let $x\in B(k)$, the base level of the KR-partition $\mathcal{P}(k)$. Because every tower in $\mathcal{P}(k)$ has the same cyclic permutation and the same number of floors labeled $j$ for all $j\in \{1,2,\ldots, c\}$, we have that $S^{o_i^k(1)+o_i^k(2)+\cdots + o_i^k(c)}(x) \in B(k)$. Let $m_k = o_i^k(1) + o_i^k(2) + \cdots + o_i^k(c)$. This implies that $S^{m_k\cdot n}(x)\in B(k)$ for all $x\in B(k)$ and  all $n\in \N$.

    Let $x = \bigcap_{k\in \N} B(k)$. 
    Fix $\eps > 0$; there exists some $K_\eps\in\N$ such that for all $k\geq K_\eps$ if $x,y\in B(k)$, then $d(x,y) < \eps$. For each $k$ large enough, we thus have that $d(S^{m_k\cdot n}(x), x)< \eps$. Hence $x$ is a Toeplitz point under $S$. Because $(X,S)$ is a minimal system that can be generated by a Toeplitz point, $(X,S)$ is thus a Toeplitz flow.
\end{proof}


{ \begin{remark} Observe that in Theorem~\ref{Theorem: Sufficient Same Permutation and Orbits}, part (1) is a special case of part (2): if $p=q\circ h$ for a locally constant function $q$, then $p$ acts the same way on each tower $P(k)$ for sufficiently large $k$ (since the towers refine the clopen sets on which $q$ is constant). In this case, the orbit permutations and the orbit heights are automatically the same across all towers at each sufficiently large level. We note that the two parts are not equivalent, as a jump function could produce the same orbit heights and same orbit permutations on each tower while differing on specific floors of each tower.\end{remark}

The conditions in Theorem~\ref{Theorem: Sufficient Same Permutation and Orbits}(2) require that both the orbit permutations agree across towers and the individual orbit heights agree. The following two questions ask whether each condition is independently necessary.}

\begin{question}
    If for all $\mathcal{P}(k)$, the towers have different orbit permutations, is it possible for the minimal speedup $(X,S)$ to be a Toeplitz flow? What if the original flow is generated by a constant length substitution?
\end{question}

\begin{question}
    If for all $\mathcal{P}(k)$, the towers have different orbit heights, is it possible for the minimal speedup $(X,S)$ to be a Toeplitz flow? What if the original flow is generated by a constant length substitution?
\end{question}  

{ We conjecture that the sufficient conditions of Theorem~\ref{Theorem: Sufficient Same Permutation and Orbits}(2) are in fact necessary, providing a complete characterization.}

\begin{conjecture}
    Suppose $(X,T)$ is a Toeplitz flow with period structure $(p_k)$ and suppose $(X,S)$ is a bounded speedup of $(X,T)$ with orbit number $c$. Then $(X,S)$ is a Toeplitz flow if and only if for all sufficiently large $k$, $\pi_i^{(k)}$ is the same  cyclic permutation on $\{1,2,\ldots, c\}$ for each tower of the KR-partition $\mathcal{P}(k)$ and the orbit heights are the same in each tower, (i.e., $o_i^k(m)=o_j^k(m)$ for each $m\in \{1,2,\ldots,c\}$ and all $i,j$).
\end{conjecture}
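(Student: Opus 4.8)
The forward implication is essentially already in hand: if for all sufficiently large $k$ every tower of $\mathcal{P}(k)$ carries the same cyclic permutation and the orbit heights agree across towers, then in particular these hypotheses hold for one such $k$, and Theorem~\ref{Theorem: Sufficient Same Permutation and Orbits} immediately yields that $(X,S)$ is a Toeplitz flow. So the substance of the conjecture is the converse, which I would attack by contraposition: assuming $(X,S)$ is Toeplitz, I want to force both the permutation condition and the orbit-height condition for all large $k$. Minimality of $(X,S)$ already forces each $\pi_i^{(k)}$ to be cyclic, exactly as in the proof of Theorem~\ref{Theorem: Sufficient Same Permutation and Orbits}, so the two genuinely new assertions are that these cyclic permutations coincide across towers and that $o_i^k(j)=o_{i'}^k(j)$ for all $i,i'$ and all $j$, for all large $k$.

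The main tool would be the constant-length $S$-adic coding of $(X,S)$ given by the substitutions $\tau_k^l$ of Definition~\ref{Definition: generalize tau}, together with the length-changing expansion $\psi_l$ that records the true jump-heights $o_i^k(j)$ of each orbit segment (exactly the $\psi$ and $\phi$ appearing in the worked examples). The point is that the subshift obtained by applying $\psi_l$ to a fixed sequence of the $\tau$-system is conjugate to $(X,S)$, so $(X,S)$ is Toeplitz if and only if this expanded sequence $w$ is a Toeplitz sequence. I would fix a marker symbol---the start of a primary-tower super-word carrying orbit label $1$---and use that, by definition, this marker must recur with a finite essential period at each scale $k$. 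The position in $w$ of the $n$-th super-symbol is the partial sum of the corresponding orbit heights $o_{a}^{k}(\ell)$, so periodic recurrence of the marker imposes rigid arithmetic constraints on these partial sums.

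The necessity engine is then a uniform version of the arguments in Theorem~\ref{Theorem: no speedup with c=2 is Toeplitz} and Example~\ref{Example: All speedups of ADL are same}. If the orbit heights fail to agree across towers for infinitely many $k$, the return-word recoding $\phi$ of the $\tau$-system has $\psi$-images of unequal length; iterating it and using primitivity (Lemma~\ref{Lemma: generalize tau is primitive}) one shows that the minimal period with which the marker can recur within $\psi(\phi^k(\cdot))$ grows without bound in $k$, so no finite period can be an essential period of $w$, contradicting Toeplitzness. A mismatch of permutations across towers is handled analogously: the orbit label attached to a fixed marker position then differs between two occurrences of the same super-word, again destroying periodicity of $w$. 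A complementary structural route would instead invoke that a Toeplitz $(X,S)$ admits a refining sequence of $S$-KR-partitions whose towers all share one height $q_m$; translating ``equal $S$-tower height'' back through the orbit labeling should force $o_i^k(j)=o_{i'}^k(j)$ and a common permutation, tying the statement to Proposition~\ref{Prop: Average Steps give S-coboundary} and Theorem~\ref{Theorem: Sufficient for S-coboundary}.

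The hard part is making this drift argument uniform over all scales for an arbitrary primitive, constant-length $S$-adic system, rather than through the explicit computation of $\phi^k$ available in the examples. Concretely, I would need a lemma quantifying the minimal forced recurrence period in terms of the discrepancies $o_i^k(j)-o_{i'}^k(j)$, showing that any nonzero discrepancy at level $k$ propagates under $\phi$ to a strictly larger forced period, and that primitivity guarantees such a discrepancy recurs at infinitely many levels so that no finite period survives. Establishing this propagation estimate---and, in the structural approach, verifying that no exotic Toeplitz $S$-coding can coexist with unbalanced orbit heights or mismatched permutations---is where I expect the real difficulty to lie.
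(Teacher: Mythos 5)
The statement you set out to prove is, in the paper, a conjecture: the authors prove only the ``if'' direction (their Theorem~\ref{Theorem: Sufficient Same Permutation and Orbits}) and explicitly leave the ``only if'' direction open. Your proposal correctly reduces the forward implication to that theorem, but for the converse you have produced a strategy, not a proof. The entire necessity argument rests on an unproved ``propagation estimate'' --- that any discrepancy $o_i^k(j)\neq o_{i'}^k(j)$, or any mismatch of tower permutations, at infinitely many levels forces the minimal recurrence period of a marker in the $\psi$-expanded coding to grow without bound. You name this lemma yourself as the hard part and offer no argument for it beyond analogy with Theorem~\ref{Theorem: no speedup with c=2 is Toeplitz} and Example~\ref{Example: All speedups of ADL are same}, where the unbounded growth is verified by explicit computation of the iterates $\phi^k$ for one specific substitution. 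Nothing in your proposal rules out that, in a general primitive constant-length $S$-adic system, discrepancies at level $k$ recombine or cancel at higher levels so that some marker still recurs with a finite period; excluding that is precisely the open content of the conjecture, so the gap is not a technicality but the whole theorem.

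There is also a concrete false step in your sketch. You claim that minimality of $(X,S)$ ``already forces each $\pi_i^{(k)}$ to be cyclic, exactly as in the proof of Theorem~\ref{Theorem: Sufficient Same Permutation and Orbits}.'' That argument in the paper uses the hypothesis that all towers carry the \emph{same} permutation; without it, minimality does not force cyclicity of the individual $\pi_i^{(k)}$. The paper's own Example~\ref{Example: New Non Example} is a minimal bounded speedup with $c=2$ in which $\pi_a^{(k)}$ is the transposition while $\pi_b^{(k)}$ is the identity, which is not a cyclic permutation on $\{1,2\}$; minimality there is verified via Lemma~\ref{Lemma: tau is primitive}, not via cyclicity of each tower permutation (indeed, the proof of Theorem~\ref{Theorem: no speedup with c=2 is Toeplitz} shows any minimal $c=2$ speedup of that system must have one identity permutation). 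Consequently, cyclicity of every $\pi_i^{(k)}$ must itself be deduced from the Toeplitz hypothesis rather than from minimality, which adds to what your missing key lemma would have to deliver.
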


{\subsection{Coboundary conditions and conjugacy to constant speedups} We now turn to the question of when a Toeplitz speedup is conjugate to a constant speedup $(X,T^c)$. The answer is governed by a coboundary condition.}
Given a topological dynamical system $(X,T)$, { a} continuous function $g: X\to \R$ is a {\em $T$-coboundary} if there exists a continuous function $f: X\to \R$ such that { $g = f-f\circ T$}.
In \cite{AAO} it was shown that if $c$ is the orbit number for the speedup $(X,S)$ with jump function $p(x)$, then $p(x)-c$ is a $T$-coboundary.  Additionally, if $p(x)-c$ is an $S$-coboundary and $T^c$ is a minimal action on $X$, then $(X,S)$ is conjugate to $(X,T^c)$; we note that in general, $p(x)-c$ need not be an $S$-coboundary.

In the following proofs we use $p(x,n) = \sum_{i=0}^{n-1} { p(S^i(x))}$ to denote the unique natural number satisfying $S^n(x) = T^{p(x,n)}(x)$.

\begin{proposition}\label{Prop: Average Steps give S-coboundary}
    Let $(X,S)$ be the minimal bounded speedup of a minimal system $(X,T)$ with jump function $p(x)$ and orbit number $c$. There exists a level $\mathcal{P}(k)$ of the Kakutani-Rokhlin partition of $(X,T)$ { such that for all $x \in X$ and $x \in T^j(B_i^k)$ it holds that
    $$
        p(x,n) = cn
    $$
    for all $n\in\N$ such that $n$ is a return time of $x$ to its cylinder set $T^j(B_i^k)$ under $S$,}
    if and only if we can define a function $g \in C(X, \Z)$ such that
    $$
        p(x) = c + g(x) - g(S(x)) \text{\ for all $x\in X$.}
    $$
    In other words, if and only if $p(x)-c$ is an $S$-coboundary.
\end{proposition}
\begin{proof}
    Take $k$ such that $p(x)$ is constant on every cylinder set in $\mathcal{P}(k)$. As $S$ is minimal, for any KR-partition $\mathcal{P}(k)$ there exists $k'>k$ such that any $T$-tower of $\mathcal{P}(k')$ {  intersects} all towers of $\mathcal{P}(k)$ with every possible orbit labeling (see Lemma~\ref{Lemma: generalize tau is primitive}). We construct the function $g$ along the S-orbit of a base point of the partition. For $x_0\in B_1^{k'}$ we set $g(x_0) = 0$ and then for $x_1 = S(x_0)$ we have
    $$
        g(x_1) = c + g(x_0) - p(x_0).
    $$
    By { recursion} we can define $g$ for any $x$ in the S-orbit of $x_0$,
    $$
        g(x_n) = nc + g(x_0) - p(x_0,n) \text{\ for $x_n = S^n(x_0)$}.
    $$
    This construction gives every cylinder set $B^k_i(j)$ in $\mathcal{P}(k)$ the $g$-value of $x_l$ such that $x_l \in B^k_i(j)$; here $B^k_i(j)$ indicates the $j$-th floor of the $A_i^k$-tower in $\mathcal{P}(k)$. Thus for $p(x)-c$ to be an $S$-coboundary, any return to a previously visited cylinder set has to give the same value of $g$. Hence, for any $x$ in any base $B_i^k$ of the KR-partition $\mathcal{P}(k)$ and $n$ such that $S^n(x)$ returns to that base,
    \begin{align*}
        g(S^n(x)) = nc + g(x) - p(x,n) = g(x).
    \end{align*}
    Therefore the function $g$ is constant on every cylinder set of $\mathcal{P}(k)$.

    On the other hand, if there exists a continuous function $g: X \mapsto \Z$ and a level $k$ such that
    $$
        g(x) -g(S(x)) = p(x) - c  \text{\ for all $x\in X$ and $g(x) = g(y)$ for all $x,y \in B_i^k(j)$,}
    $$
    then for any return time $n$ of $x$ to $ B_i^k(j)$ under $S$
    \begin{align*}
        0   &= g(x) - g(S^n(x))\\
            &= \sum_{m=0}^{n-1} (g(S^m(x)) - g(S^{m+1}(x))\\
            &= \sum_{m=0}^{n-1} (p(S^m(x)) - c) = p(x,n) - nc.          
    \end{align*}
\end{proof}

\begin{eg}
    Consider the length $3$-Toeplitz shift on two symbols $\{a,b\}$ generated by the following substitution
    $$
    \chi: \begin{cases}
    a \to aab\\
    b \to abb
    \end{cases}
    $$
    and take any jump function with orbit number $c=2$ such that 
    \begin{itemize}
        \item the speedup $(X,S)$ is minimal,
        \item the orbit permutations are $\pi^{(k)}_a = \pi^{(k)}_b = (12)$ and
        \item $o_a^k(1) = o_b^k(1)$ (and therefore also $o_a^k(2) = o_b^k(2)$).
    \end{itemize}
    Then $(X,S)$ is conjugate to $(X,T^2)$ and is therefore Toeplitz.

    We may choose $k$ such that the KR-partition $\mathcal{P}(k)$ has a constant jump function on every floor and such that every tower in the KR-partition $\mathcal{P}(k+2)$ { intersects} every tower from $\mathcal{P}(k)$. To apply Proposition~\ref{Prop: Average Steps give S-coboundary}, we need to show that
    $$
        p(x,n) = 2n \text{ for all first return times under $S$ of $x$ to its cylinder set $B_i^k$.}
    $$    
    The jump function $p(x,n)$ counts the number of T-steps between $x \in B_i^k$ and its return to the same cylinder set. Then
    $$
        p(x,n) = mh^k  
    $$
    where $m$ is the number of T-towers traversed between those consecutive returns to $B_i^k$ and $h^k$ is the height of the towers in $\mathcal{P}(k)$.
    By the permutation $\pi^{(k)}$, every return under $S$ to the base passes through the same number of towers with orbit label $j$ and towers with orbit label $j'$; thus $m = \#(j) + \#(j') = 2\#(j)$, where $\#(j)$ denotes the number of towers traversed with orbit label $j$.
    For $x \in A_i^k$ on floor $d$ with orbit label $j$, we can count the number of $S$-steps until its return to the same floor
    \begin{align*}
        n &= o_i^k(j) + \# (i,j')o_i^k(j') + \# (i',j')o_{i'}^k(j') +\# (i',j)o_{i'}^k(j) \\
          &= (\# (i,j') + \# (i',j'))o_i^k(j')  +(\# (i',j) +1) o_{i}^k(j) \\
          &=\# (j')o_i^k(j')  +\# (j) o_{i}^k(j) = \#(j) h^k        
    \end{align*}
    where $\#(i,j)$ is the number of occurrences of towers $A_i^k$ with orbit label $j$ between $x$ and its return under $S$.
    Then it follows that
    $$
        p(x,n) = mh^k = 2\#(j) h^k = 2n.
    $$
\end{eg}

We now demonstrate that the only minimal speedups of the Toeplitz flow generated by $\chi$ with orbit number $c=2$ which are also Toeplitz are conjugate to $(X,T^2)$ and must arise as described in the above example.

\begin{eg}\label{Example: All speedups of ADL are same}
    Consider the Toeplitz shift $(X,T)$ on two symbols $\{a,b\}$ generated by 
    $$
    \chi: \begin{cases}
    a \to aab\\
    b \to abb.
    \end{cases}
    $$ Every minimal speedup with orbit number $c=2$ that is a Toeplitz flow is conjugate to $(X,T^2)$.
\end{eg}
\begin{proof}
    {\bf Case I:} Let $S$ be a minimal speedup with orbit number $c=2$. If $\pi_a^{(k)}=\pi_b^{(k)} = (12)$ for some $k\in \N$, then note that $\pi_a^{(l)} = \pi_b^{(l)} = (12)$ for all $l\geq k$.  Consider the substitution $\tau: \left( \{a,b\}\times \{1,2\}\right) \to \left( \{a,b\}\times \{1,2\}\right)^\ast$ as defined in Definition~\ref{Definition: tau} that models the speedup in terms of the towers and orbit paths traced.
    \begin{align*}
        \tau(a,1) &= (a,1)(a,2)(b,1)\\
        \tau(a,2) &= (a,2)(a,1)(b,2)\\
        \tau(b,1) &= (a,1)(b,2)(b,1)\\
        \tau(b,2) &= (a,2)(b,1)(b,2)
    \end{align*}
    Then $\tau$ is a primitive substitution that is conjugate to the substitution $\phi: \{A,B,C,D\}\to \{A,B,C,D\}^\ast$ by
    \begin{align*}
        \phi(A) & = ABC \quad & \text{ where }  & \quad A = (a,1)(a,2) \\
        \phi(B) & = CBC \quad &  & \quad B = (b,1)(a,2)\\
        \phi(C) & = ABD & & \quad C = (a,1)(b,2)\\
        \phi(D) & = CBD & & \quad D = (b,1)(b,2).
    \end{align*}
    Let $o_1(a)$ denote the number of jumps made by $S$ in $(a,1)$, $o_2(a)$ denote the number of jumps made by $S$ in $(a,2)$, and similarly for $o_1(b)$ and $o_2(b)$.  Note that $o_1(a) + o_2(a) = 3^k$ and $o_1(b)+o_2(b) = 3^k$ for some fixed $k\in \N$.

    We denote by $|A| = o_1(a)+o_2(a) = 3^k$ the number of jumps made by $S$ in $A = (a,1)(a,2)$. We similarly define $|B| = o_1(b) + o_2(a)$, $|C| = o_1(a) + o_2(b)$, and $|D| = 3^k$.

    Observe that if $(X,S)$ is Toeplitz, then there is some point $x\in X$ that is Toeplitz under $S$. Further, there is a unique decomposition of $x$ into its $\phi$-words. Using the minimality of $x$ and the definition of Toeplitz, there exist $l,j\in \N$ such that $S^{ln+j}(x)$ has a $\phi$-decomposition that begins with $A$ for all $n\in \N$.
    \begin{align*}
        \phi^2(A) &= ABCCBCABD \\
        \phi^2(B) &= ABDCBCABD \\
        \phi^2(C) &= ABCCBCCBD \\
        \phi^2(D) &= ABDCBCCBD
    \end{align*}
    Because $\phi^2(C)$ contains only one $A$, it follows that $l\geq |\phi^2(C)| = 8\cdot 3^k + o_1(a) + o_2(b)$. Similarly, we can conclude that $l\geq |\phi^2(D)| = 9\cdot 3^k$. Further observe that $|\phi^2(A)| = 9\cdot 3^k$ and $|\phi^2(B)| = 8\cdot 3^k + o_1(b) + o_2(a)$. Note that for all $m\in \N$, $\phi^m(A)$ and $\phi^m(C)$ differ in only one position and $\phi^m(C)$ and $\phi^m(D)$ differ in only one position.
    \begin{align*}
        \phi^3(A) & = \phi^2(A)\phi^2(B)\phi^2(C)  = ABCCBCABD ~ABDCBCABD~ ABCCBCCBD \\
        \phi^3(B) & = \phi^2(C)\phi^2(B)\phi^2(C)  = ABCCBCCBD ~ABDCBCABD~ ABCCBCCBD \\
        \phi^3(C) & = \phi^2(A)\phi^2(B)\phi^2(D)  = ABCCBCABD ~ABDCBCABD~ ABDCBCCBD \\
        \phi^3(D) & = \phi^2(C)\phi^2(B)\phi^2(D)  = ABCCBCCBD ~ABDCBCABD~ ABDCBCCBD
    \end{align*}
    Thus we return to $A$ every $l=9\cdot 3^k$ jumps if and only if $|\phi^2(A)| = |\phi^2(B)| = |\phi^2(C)| = |\phi^2(D)| = 9\cdot 3^k$, which occurs if and only if $|A| = |C|$ and $o_1(a)=o_1(b)$. Otherwise, $l\geq \max \{|\phi^2(B)\phi^2(D)|, |\phi^2(C)\phi^2(B)|, |\phi^2(A)\phi^2(B)|\} = \max \{17\cdot 3^k + o_1(b)+o_2(b), 18\cdot 3^k\}.$ If $l = 18\cdot 3^k$, we again deduce $o_1(a) = o_1(b)$; otherwise $l\geq \max\{|\phi^3(A)|, |\phi^3(B)|, |\phi^3(C)|, |\phi^3(D)|\}$. Inductively, we obtain that $l$ is a finite value if and only if $o_1(a) = o_1(b)$.

    We thus conclude that if $(X,S)$ is a Toeplitz flow with orbit number $c=2$ and $\pi_a^{(k)} = \pi_b^{(k)} = (12)$, then $(X,S)$ is topologically conjugate to $(X,T^2)$.

    {\bf Case II:} If $c = 2$ and $\pi_a^{(k)} = \pi_b^{(k)} = \text{id}$, then $(X,S)$ is not minimal.

    {\bf Case III:} Suppose we have that $\pi_a^{(1)} = (12)$ and $\pi_b^{(1)} = \text{id}$. Then $\pi_a^{(2l+1)} = (12)$ and $\pi_b^{(2l+1)} = \text{id}$ for all $l\in \N$. We again consider our substitution $\tau$ as defined in Definition~\ref{Definition: tau}, which is given by 
    \begin{align*}
        \tau(a,1) & = (a,1)(a,2)(b,1)(a,1)(a,2)(b,1)(a,1)(b,2)(b,2) \\
        \tau(a,2) & = (a,2)(a,1)(b,2)(a,2)(a,1)(b,2)(a,2)(b,1)(b,1) \\
        \tau(b,1) & = (a,1)(a,2)(b,1)(a,1)(b,2)(b,2)(a,2)(b,1)(b,1) \\
        \tau(b,2) & = (a,2)(a,1)(b,2)(a,2)(b,1)(b,1)(a,1)(b,2)(b,2).
    \end{align*}
    We obtain the conjugate substitution 
    \begin{align*}
        \phi(A) &= AABCDAE \quad &\quad \text{ where } \quad & A = (a,1)(a,2)(b,1)\\
        \phi(B) &= AABDBDBCD & & B = (a,1)(b,2)(b,2)(a,2)\\
        \phi(C) &= AABDBCD & & C =(a,1)(b,2)(a,2)\\
        \phi(D) &= AABDBCDAEAE & & D = (a,1)(b,2)(a,2)(b,1)(b,1)\\
        \phi(E) &= AABDBDBCBAEAE & & E =(a,1)(b,2)(b,2)(a,2)(b,1)(b,1).
    \end{align*}
    As in Case I, we get $|A| = 3^k + o_1(b)$, $|B| = 3^k + 2o_2(b)$, $|C| = 3^k + o_2(b)$, $|D| = 2\cdot 3^k + o_1(b)$, and $|E| = 3\cdot 3^k$ for some $k\in \N$.  Additionally, we have $|\phi(A)| = 13\cdot 3^k + o_1(b)$, $|\phi(B)| = 17\cdot 3^k + 2 o_2(b)$, $|\phi(C)| = 13\cdot 3^k + o_2(b)$, $|\phi(D)| = 22\cdot 3^k + o_1(b)$, and $|\phi(E)| = 27\cdot 3^k$.

    If there is a point $x\in X$ that is Toeplitz under $S$, then there exist $j,l\in \N$ such that $S^{ln+j}(x)$ has a $\phi$-decomposition that begins with $AAB$ for all $n\in \N$. However, we claim that there are no values for $o_1(b)$ and $o_2(b)$ such that this is possible.  Hence, it is not possible for $(X,S)$ to be Toeplitz when $c=2$ with $\pi_a^{(1)} = (12)$ and $\pi_b^{(1)}=\text{id}$.

    {\bf Case IV:} Suppose we have $\pi_a^{(1)} = \text{id}$ and $\pi_b^{(1)} = (12)$. Then $\pi_a^{(2l+1)} = \text{id}$ and $\pi_b^{(2l+1)} = (12)$ for all $l\in \N$. We again consider the substitution $\tau$ as in Definition~\ref{Definition: tau}, given by 
    \begin{align*}
        \tau(a,1) & = (a,1)(a,1)(b,1)(a,2)(a,2)(b,2)(a,1)(b,1)(b,2) \\
        \tau(a,2) & = (a,2)(a,2)(b,2)(a,1)(a,1)(b,1)(a,2)(b,2)(b,1) \\
        \tau(b,1) & = (a,1)(a,1)(b,1)(a,2)(b,2)(b,1)(a,2)(b,2)(b,1) \\
        \tau(b,2) & = (a,2)(a,2)(b,2)(a,1)(b,1)(b,2)(a,1)(b,1)(b,2).
    \end{align*}
    Similar to the above cases, we obtain the conjugate substitution 
    \begin{align*}
        \phi(A) &= ABC  \\
        \phi(B) &= ABCADAEAECC \\
        \phi(C) &= ABCADCC \\
        \phi(D) &= ABCADAECCADAECCADAEAECC \\
        \phi(E) &= ABCADAECCADAEAEDD.
    \end{align*}
    where \begin{align*}
         A& = (a,1)\\
        B& = (a,1)(b,1)(a,2)(a,2)(b,2)\\
        C& =(a,1)(b,1)(b,2)\\
        D& = (a,1)(b,1)(a,2)(b,2)(b,1)(a,2)(b,2)(b,1)(a,2)(a,2)(b,2)\\
        E& =(a,1)(b,1)(a,2)(b,2)(b,1)(a,2)(a,2)(b,2).
    \end{align*}
    By counting the number of jumps $S$ makes in each letter, we see that $|A| = o_1(a)$, $|B|= 2\cdot 3^k + o_2(a)$, $|C|= 3^k + o_1(a)$, $|D| = 4\cdot 3^k + 3o_2(a)$, and $|E| = 3\cdot 3^k + 2o_2(a)$ for some $k\in \N$. We further have $|\phi(A)| = 4\cdot 3^k + o_1(a)$, $|\phi(B)| = 22\cdot 3^k + o_2(a)$, $|\phi(C)| = 13\cdot 3^k + o_1(a)$, $|\phi(D)| = 48\cdot 3^k + 3o_2(a)$, and $|\phi(E)| = 35\cdot 3^k + 2o_2(a)$.

    If there is a point $x\in X$ that is Toeplitz under $S$, then there exist $j,l\in \N$ such that $S^{ln+j}(x)$ has a $\phi$-decomposition that begins with $ABC$ for all $n\in \N$. Similar to Case III, due to the formulas for the number of $S$-jumps in each letter, there are no possible values for $o_1(a)$ and $o_2(a)$ such that any such $l\in \N$ can exist. We thus conclude that it is not possible for $(X,S)$ to be Toeplitz when $c=2$, $\pi_a^{(1)}=\text{id}$, and $\pi_b^{(1)}=(12)$.

    By considering all cases, we conclude that the only minimal speedups $(X,S)$ of $(X,T)$  (where (X,T) is generated by $\chi$) with orbit number $c=2$ that are Toeplitz occur precisely when $\pi_a = \pi_b = (12)$ and $o_1(a) =o_1(b)$. Therefore, every Toeplitz speedup of $(X,T)$ with $c=2$ is conjugate to $(X,T^2)$.
\end{proof}

{ We now demonstrate that when the sufficient conditions outlined in Theorem~\ref{Theorem: Sufficient Same Permutation and Orbits} are satisfied, then $p(x)-c$ will be an $S$-coboundary. }

\begin{theorem}\label{Theorem: Sufficient for S-coboundary}
	Suppose that $(X,T)$ is a Toeplitz flow and $(X,S)$ is a minimal speedup of $(X,T)$.  If there exists some $K\in \N$ such that for the KR-partition $\mathcal{P}(K)$ we have that every tower has the same cyclic orbit permutation and each orbit has the same `height' in each tower (i.e., for every $1\leq j \leq c$, $o_i^K(j) = o_{i'}^K(j)$ for all towers $i,i'$), then $p(x) - c$ is an $S$-coboundary.
\end{theorem}

\begin{proof}
    { Let $K$ be the level for which the assumptions of the theorem hold. Then the same follows for every level $k > K$, as $\pi^{(K+1)}_{i}$ is a length $\lvert \theta_{(K+1)}\rvert$-concatenation of the permutation $\pi^{(K)}$ independent of $i$ and therefore $\pi^{(K+1)}_{i} = \pi^{(K+1)}_{i'}$ for all $i, i' \in \mathcal{A}_{(K+1)}$. Similarly $o_i^{(K+1)}(j)$ is the sum of $\lvert \theta_{(K+1)}\rvert$ values of $o_i^K(j')$ for some labels $j'$ dependent on $\pi^{(K)}$ but not $i$.
    Thus let $k>K$} be such that the jump function $p(x)$ is constant on every cylinder set of $\mathcal{P}(k)$. For an arbitrary $x\in B_i^k$, take a return time $n$ such that $S^n(x) = T^{p(x,n)}(x) \in B_i^k$. Therefore we know $h^k =p_k$ divides $p(x,n)$ and by the cyclic permutation $\pi^{(k)}$, it takes a multiple of $c$ steps to return to the same label. Thus $p(x,n) = cmh^k$ where $m$ is the number of $T$-towers with label $j$ between $x$ and $T^{p(x,n)}(x)$. To compute the number of steps under $S$, we take sums over all labels $j$ and towers $i$ between $x$ and $S^n(x)$, recall that $\#(i,j)$ is the number of occurrences of tower $A_i^k$ with label $j$,
    \begin{align*}
        n &= \sum_j\sum_i \#(i,j)o_i^k(j) = \sum_j  o_{i_o}^k(j) \sum_i \#(i,j)\\
            &= \sum_j  o_{i_o}^k(j) \#(j)= m\sum_j  o_{i_o}^k(j)\\
            &= mh^k 
    \end{align*}
    with $o_i^k(j) = o_{i'}^k(j)$ for all towers $i,i'$ and by the permutation every label has the same number of occurrences $\#(j)=\#(j')$ in the $cm$ towers between. Then $p(x,n) = cmh^k = cn$ and by Proposition \ref{Prop: Average Steps give S-coboundary} $p(x) - c$ is an $S$-coboundary.
\end{proof}

\begin{corollary}\label{Corollary: Sufficient Conjugacy to T^c}
    Suppose that $(X,T)$ is a Toeplitz flow and $(X,S)$ is a minimal speedup of $(X,T)$ such that $\gcd(c,p_k) = 1$ for all $k$ and the assumptions of the previous theorem hold. Then $(X,S)$ is conjugate to $(X,T^c)$.
\end{corollary}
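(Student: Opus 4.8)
The plan is to assemble this corollary from three pieces already in place: the minimality of $T^c$, the existence of an $S$-coboundary for $p(x)-c$, and the conjugacy criterion recorded from \cite{AAO} stating that if $p(x)-c$ is an $S$-coboundary and $T^c$ is a minimal action on $X$, then $(X,S)$ is conjugate to $(X,T^c)$. The two hypotheses of the corollary are precisely calibrated to supply the two inputs to this criterion, so the argument is a matter of checking that they line up.

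First I would use the hypothesis $\gcd(c,p_k)=1$ for all $k$, which is exactly the condition in Theorem~\ref{Theorem:Constant Speedup Characterization}. That theorem then gives that $(X,T^c)$ is a Toeplitz flow, and in particular $T^c$ acts minimally on $X$. This discharges the minimality requirement in the cited conjugacy criterion. Next I would observe that the corollary also assumes the hypotheses of Theorem~\ref{Theorem: Sufficient for S-coboundary}: there is some $k$ for which every tower of $\mathcal{P}(k)$ carries the same cyclic orbit permutation and each orbit has the same height in each tower. Applying that theorem directly yields a continuous $g\in C(X,\Z)$ with $p(x)=c+g(x)-g(S(x))$, i.e.\ an $S$-coboundary for $p(x)-c$. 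This discharges the coboundary requirement.

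With both inputs in hand, I would close by invoking the conjugacy criterion from \cite{AAO}: since $p(x)-c$ is an $S$-coboundary and $T^c$ is a minimal action, $(X,S)$ is conjugate to $(X,T^c)$. The proof is essentially bookkeeping, so I do not expect a genuine obstacle; the only point requiring care is confirming that the two separate hypotheses feed the two separate inputs correctly, namely that minimality of $T^c$ comes from the $\gcd$ condition (and not from the permutation/height conditions, which instead produce the coboundary), and that both hold simultaneously, which they do by assumption.
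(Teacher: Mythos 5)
Your proposal is correct and follows essentially the same route as the paper: the $\gcd(c,p_k)=1$ hypothesis feeds Theorem~\ref{Theorem:Constant Speedup Characterization} to make $(X,T^c)$ a Toeplitz flow (hence minimal), Theorem~\ref{Theorem: Sufficient for S-coboundary} supplies the $S$-coboundary for $p(x)-c$, and the conjugacy criterion from \cite{AAO} (Proposition 2.6 there) closes the argument. No gaps; this matches the paper's proof step for step.
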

\begin{proof}
Under the condition $\gcd(c,p_k) = 1$ for all $k$ we know that $(X,T^c)$ is a Toeplitz flow, see Theorem \ref{Theorem: constant speedup minimality characterization}. As $p(x) - c$ is an $S$-coboundary, Proposition 2.6 of \cite{AAO} proves the conjugacy.
\end{proof}

{ This motivates the following conjecture, which asks whether the coboundary condition is forced by the assumption
that the speedup is Toeplitz.}

\begin{conjecture}
	If $(X,T)$ is a Toeplitz flow and $(X,S)$ is a minimal speedup of $(X,T)$ that is also a Toeplitz flow, then $p(x)-c$ is an $S$-coboundary. Further, if $\gcd(c,p_k) = 1$ for all $k$, then $(X,S)$ is topologically conjugate to $(X,T^c)$.
\end{conjecture}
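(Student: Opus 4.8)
The plan is to reduce the conjecture to the sufficient-condition results already proved, by showing that the hypotheses of Theorem~\ref{Theorem: Sufficient for S-coboundary} are in fact \emph{forced} whenever the speedup is Toeplitz. Precisely, I would aim to establish the following converse to Theorem~\ref{Theorem: Sufficient Same Permutation and Orbits}: if the minimal speedup $(X,S)$ of the Toeplitz flow $(X,T)$ is itself a Toeplitz flow, then there is a level $k$ at which every tower of $\mathcal{P}(k)$ carries the same (necessarily cyclic, by minimality) orbit permutation $\pi$ and the orbit heights are balanced across towers, i.e. $o_i^k(j)=o_{i'}^k(j)$ for all towers $i,i'$ and all $1\le j\le c$. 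Granting this converse, the first assertion of the conjecture is immediate from Theorem~\ref{Theorem: Sufficient for S-coboundary}, and the second follows exactly as in Corollary~\ref{Corollary: Sufficient Conjugacy to T^c}: when $\gcd(c,p_k)=1$ for all $k$ the constant speedup $(X,T^c)$ is Toeplitz by Theorem~\ref{Theorem:Constant Speedup Characterization}, so an $S$-coboundary for $p(x)-c$ together with Proposition~2.6 of~\cite{AAO} yields $(X,S)\cong(X,T^c)$.

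The heart of the matter is thus the structural converse, and here the natural tool is the S-adic characterization of Toeplitz flows (Theorem~\ref{Theorem: S-adic Toeplitz Representation}). Since $(X,S)$ is Toeplitz it admits a presentation by \emph{constant length} left-proper substitutions, and this constant-length rigidity is what I would try to convert into balanced orbit data. The speedup is natively encoded by the orbit-path substitutions $\tau$ and $\phi$ of Definitions~\ref{Definition: tau} and~\ref{Definition: generalize tau}, whose letters are orbit paths through $\mathcal{P}(k)$ of total $S$-length $\sum_j o_i^k(j)$; the plan is to argue that a Toeplitz (hence eventually constant-length) structure forces these path lengths to agree across towers and the permutations to coincide, since otherwise the base symbol of the $\mathcal{C}$-decomposition cannot recur with any fixed period. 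This is exactly the mechanism by which a non-constant-length $\phi$ obstructs Toeplitzness in the proofs of Theorems~\ref{Theorem: Example not Toeplitz} and~\ref{Theorem: no speedup with c=2 is Toeplitz}. Alternatively, one can attempt to verify the hypothesis of Proposition~\ref{Prop: Average Steps give S-coboundary} directly, showing that over one $S$-period of a base point the orbit passes through each label an equal number of times and accumulates exactly $c\,q$ $T$-steps in $q$ $S$-steps, so that $p(x,n)=cn$ on $S$-return times to the level-$k$ cylinders of $(X,T)$.

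The main obstacle is that Toeplitzness is a conjugacy invariant, so one cannot simply inspect the canonical $\tau/\phi$ encoding: a Toeplitz flow has many S-adic presentations, and the constant-length one need not be the orbit-path presentation. The hard step is therefore to pass from ``some constant-length left-proper presentation of $(X,S)$ exists'' to ``the orbit-path data $(\pi_i^{(k)},o_i^k(j))$ are balanced at some level $k$.'' I would attempt this by comparing period structures: the period structure $(q_m)$ of $(X,S)$ fixes the underlying odometer of the speedup, and recognizability should identify return times to level-$m$ cylinders with complete passes through the $c$ orbit labels, transferring the length-equality imposed by $(q_m)$ back onto the $o_i^k(j)$. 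The delicate point is that an unbalanced permutation and unequal heights can in principle compensate at one level while failing at the next, so I expect the argument to require an induction on $k$ in the spirit of the escalating lower bounds ($t\ge 6\cdot 4^k$, and $l\ge 8|\theta|^k,\,12|\theta|^k,\dots$) used in Section~\ref{sec: Not Toeplitz}. Since this is precisely the necessity direction the authors flag as open, I would not expect the proposal to close the conjecture without substantial further work.
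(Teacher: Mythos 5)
The statement you are trying to prove is not proved in the paper at all: it is stated as an open conjecture, and the necessity direction you identify as ``the heart of the matter'' is precisely what the authors leave unresolved (indeed, it is essentially the paper's final conjecture, which asserts that $(X,S)$ is Toeplitz if and only if the tower permutations $\pi_i^{(k)}$ eventually agree as a single cyclic permutation and the orbit heights are balanced, $o_i^k(m)=o_j^k(m)$ for all $i,j,m$). Your reduction of the target statement to that structural converse is sound and mirrors exactly how the paper's machinery is organized: given the converse, Theorem~\ref{Theorem: Sufficient for S-coboundary} yields the $S$-coboundary for $p(x)-c$, and under $\gcd(c,p_k)=1$ the conjugacy to $(X,T^c)$ follows as in Corollary~\ref{Corollary: Sufficient Conjugacy to T^c} via Theorem~\ref{Theorem:Constant Speedup Characterization} and Proposition~2.6 of \cite{AAO}. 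But the reduction trades one open conjecture for another, so nothing is actually closed.

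The genuine gap is that you give no argument for the converse itself, only a description of why it is hard. The obstacle you name is real and is exactly where the attempt fails: Toeplitzness of $(X,S)$ guarantees \emph{some} constant-length left-proper S-adic presentation (Theorem~\ref{Theorem: S-adic Toeplitz Representation}), but that presentation lives on an a priori unrelated alphabet and level structure, and there is no mechanism in the paper, nor in your sketch, that transports the constant-length rigidity of an abstract presentation back onto the orbit-path data $(\pi_i^{(k)}, o_i^k(j))$ of the specific KR-partitions of $(X,T)$. The escalating-lower-bound technique of Theorems~\ref{Theorem: Example not Toeplitz} and~\ref{Theorem: no speedup with c=2 is Toeplitz} only shows that particular unbalanced configurations obstruct Toeplitzness; it does not rule out the compensation phenomenon you yourself flag, namely that unequal heights and differing permutations at every level could conspire so that return words to a deeper cylinder nevertheless recur with fixed periods. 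Absent a lemma excluding such compensation (e.g., a recognizability argument identifying the period structure $(q_m)$ of $(X,S)$ with complete passes through the $c$ orbit labels at some level of $\mathcal{P}(k)$), the proposal is a correct roadmap but not a proof, as you candidly acknowledge in your final sentence.
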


{
\subsection{Factors of constantly sped-up systems}
As the previous section demonstrated that many speedups of Toeplitz flows are conjugate to constant speedups, we take a moment to further analyze Toeplitz flows under constant speedups to gain a better understanding of the underlying structure. We focus on the length $3$-Toeplitz system in Example~\ref{Example: All speedups of ADL are same} to look for factors of constantly sped-up systems. We use Theorem~\ref{Theorem: constant speedup substitution} to define the corresponding speedup substitutions.}

    
\begin{eg}\label{Example: const 2 speedup of 3-Teoplitz system}
    Let $(X_\chi, T)$ be a Toeplitz subshift generated by $\chi: a\mapsto aab, \ b\mapsto abb$. The substitution $\tau$ of the constant 2-speedup will be on the alphabet $\mathcal{B} =\{A,B,C,D\}$ where we sort the 2-words lexicographically. Then $\tau$ is defined in the following way.
    \begin{align*}
            \tau(A) &= \chi(aa) = aabaab = ACB \\
            \tau(B) &= \chi(ab) = aababb = ACD \\
            \tau(C) &= \chi(ba) = abbaab = BCB \\
            \tau(D) &= \chi(bb) = abbabb = BCD
    \end{align*}
    We see that $\tau^2$ is a proper, primitive substitution of constant length $3^2$.
        \begin{align*}
            \tau^2(A) &= ACB \ BCB\ ACD\\
            \tau^2(B) &= ACB \ BCB\ BCD\\
            \tau^2(C) &= ACD \ BCB\ ACD\\
            \tau^2(D) &= ACD \ BCB\ BCD
    \end{align*}
        
    To find a factor map $\mathcal{F}$ from $(X_\tau, T^2)$ to $(X_\chi, T)$ we use techniques from~\cite{CovenQuasYassawi2016}. First we scale both substitutions to the same length and analyze which positions of the substitution are not coincidences.
    For $\tau$ there are two such positions in $w_0 w_1 \cdots w_8$: at position $2$ there is a choice between $B$ and $D$ and at position $6$ there is a choice between between $A$ and $B$.
    For the original $\chi$ we see that the only difference is at position $4$
    \begin{align*}
            \chi^2(a) &= aab\ aab\ abb\\
            \chi^2(b) &= aab\ abb\ abb.
    \end{align*}
    For a factor map $\mathcal{F}$ to exist, we need to map non-singular fibers of $X_\tau$ to all non-singular fibers of $X_\chi$. All other non-singular fibers must collapse to singular fibers. First we shift $\chi^2$ by 7 such that the difference is at the same position as a difference in $\tau^2$ and compare:
    \begin{align*}
            \chi'^2(a) =&\ b\hspace{3pt}  b\hspace{3pt}  a\hspace{9pt} a\hspace{3pt}  b\hspace{3pt}  a\hspace{9pt} a\hspace{3pt}  b\hspace{3pt}  a\\
            \chi'^2(b) =&\ b\hspace{3pt}  b\hspace{3pt}  a\hspace{9pt} a\hspace{3pt}  b\hspace{3pt}  a\hspace{9pt} b\hspace{3pt}  b\hspace{3pt}  a \\
            \tau^2(A) =&\ ACB \ BCB\ ACD\\
            \tau^2(B) =&\ ACB \ BCB\ BCD\\
            \tau^2(C) =&\ ACD \ BCB\ ACD\\
            \tau^2(D) =&\ ACD \ BCB\ BCD.
    \end{align*}
    We define the map $\mathcal{F}$ such that 
    \begin{equation*}
        \mathcal{F}(A) = b, \ 
        \mathcal{F}(B) = a, \
        \mathcal{F}(C) = b, \
        \mathcal{F}(D) = a,    
    \end{equation*}
    the non-singular fiber of the pair $\{B,D\}$ collapses to a singular fiber, and the pair $\{A,B\}$ stays non-singular. The choice of which fiber collapses was arbitrary, as there exists another factor map where $\{A,B\}$ collapses by shifting $\chi^2$ by 2.
    Constant speedups with an even orbit number have no non-singular fiber at the middle position, thus these cases always need to shift the substitution to get overlapping differences to the original system $X_\chi$.
    \end{eg}
    Factor maps from other constant speedups of the same Toeplitz system onto the original $X_\chi$ can be found in a similar way. We now take a few moments to comment on several examples to highlight the connection between specific constant speedups of the same Toeplitz flow.
    
    \begin{eg} Let $(X_\chi, T)$ be a Toeplitz subshift generated by $\chi: a\mapsto aab, \ b\mapsto abb$.
    \begin{enumerate}
        \item Letting $c = 5$, there are 10 words of length 5 in the language of $X_\chi$. The proper, primitive substitution of the speedup $(X_\chi,T)$ is of length 27.

    \begin{align*}
        A&\mapsto AHC EHI BGD \ BHC EHI FGD \ AHD EGI BGJ\\
        B&\mapsto AHC EHI BGD \ BHC EHI FGD \ AHD EGI FGJ\\
        C&\mapsto AHC EHI BGJ \ BHC EGI FGD \ AHD EGI FGJ\\
        D&\mapsto AHC EHI BGJ \ BHC EGI FGD \ BHD EGI FGJ\\
        E&\mapsto AHC EHI BGJ \ BHC EHI FGD \ AHD EGI BGJ\\
        F&\mapsto AHC EHI BGJ \ BHC EHI FGD \ AHD EGI FGJ\\
        G&\mapsto AHD EHI BGD \ BHC EGI FGD \ BHD EGI BGJ\\
        H&\mapsto AHD EHI BGD \ BHC EHI FGD \ BHD EGI BGJ\\
        I&\mapsto AHD EHI BGJ \ BHC EGI FGD \ AHD EGI FGJ\\
        J&\mapsto AHD EHI BGJ \ BHC EGI FGD \ BHD EGI FGJ\\
        \intertext{We see non-singular fibers at positions $2$ with $\{C,D\}$, at $8$ with $\{D,J\}$, at $13$ with $\{G,H\}$, at $18$ with $\{A,B\}$, and at $24$ with $\{B,F\}$. The original substitution of length 27 is}
            a&\mapsto \ 
            a\hspace{3pt}  a\hspace{3pt}  b\hspace{3pt} a\hspace{3pt}  a\hspace{3pt}  b\hspace{3pt} a\hspace{3pt}  b\hspace{3pt}  b\hspace{9pt} 
            a\hspace{3pt}  a\hspace{3pt}  b\hspace{3pt} a\hspace{3pt}  a\hspace{3pt}  b\hspace{3pt} a\hspace{3pt}  b\hspace{3pt}  b\hspace{9pt}
            a\hspace{3pt}  a\hspace{3pt}  b\hspace{3pt} a\hspace{3pt}  b\hspace{3pt}  b\hspace{3pt} a\hspace{3pt}  b\hspace{3pt}  b \\  %
            b&\mapsto \ 
            a\hspace{3pt}  a\hspace{3pt}  b\hspace{3pt} a\hspace{3pt}  a\hspace{3pt}  b\hspace{3pt} a\hspace{3pt}  b\hspace{3pt}  b\hspace{9pt} 
            a\hspace{3pt}  a\hspace{3pt}  b\hspace{3pt} a\hspace{3pt}  b\hspace{3pt}  b\hspace{3pt} a\hspace{3pt}  b\hspace{3pt}  b\hspace{9pt}
            a\hspace{3pt}  a\hspace{3pt}  b\hspace{3pt} a\hspace{3pt}  b\hspace{3pt}  b\hspace{3pt} a\hspace{3pt}  b\hspace{3pt}  b.
    \end{align*}
        To collapse all but the middle fiber we map $\{A, B, E, F, H\}$ to $a$ and $\{C, D, G,I,J\}$ to $b$. Hence we see that the original Toeplitz flow is a factor of the Toeplitz flow $(X_\chi, T^5)$.
        \item If we take $c = 7$, then there are 14 words of length 7 in the language of $X_\chi$. The seven non-singular fibers are at positions 1 with $\{L, M\}$, at 5 with $\{H, I\}$, at 9 with $\{A, B\}$, at 13 with $\{C, E\}$, at 17 with $\{K, I\}$, at 21 with $\{B, G\}$, and at 25 with $\{ D, N\}$. Thus, the factor maps $\{A, B, C, F, G, L, M\}$ to $a$ and $\{D, E, H, I, J, K, N\}$ to $b$. Again, the original Toeplitz flow is a factor of the Toeplitz flow $(X_\chi, T^7)$.
        \item For the constant speedup by $c=4$, we take the alphabet $\{0, 1, \ldots, 7 \}$ of size 8. We find 4 non-singular fibers at positions 1 with $\{2,7\}$, at 3 with $\{0,3\}$, at 5 with $\{4, 5\}$ and at 7 with $\{1,2\}$. 

        To map this onto the constant $2$-speedup (see Example \ref{Example: const 2 speedup of 3-Teoplitz system}), we shift the $2$-speedup once to have the differences at the same positions.
    \begin{align*}
        0&\mapsto 0\hspace{3pt}2\hspace{3pt}4 \hspace{9pt} 0\hspace{3pt}6\hspace{3pt}5 \hspace{9pt} 3\hspace{3pt}1\hspace{3pt}5\\
        1&\mapsto 0\hspace{3pt}2\hspace{3pt}4 \hspace{9pt} 3\hspace{3pt}6\hspace{3pt}4 \hspace{9pt} 3\hspace{3pt}1\hspace{3pt}5\\
        2&\mapsto 0\hspace{3pt}2\hspace{3pt}4 \hspace{9pt} 3\hspace{3pt}6\hspace{3pt}4 \hspace{9pt} 3\hspace{3pt}2\hspace{3pt}5\\
        3&\mapsto 0\hspace{3pt}2\hspace{3pt}4 \hspace{9pt} 3\hspace{3pt}6\hspace{3pt}5 \hspace{9pt} 3\hspace{3pt}1\hspace{3pt}5\\ 
        4&\mapsto 0\hspace{3pt}7\hspace{3pt}4 \hspace{9pt} 0\hspace{3pt}6\hspace{3pt}4 \hspace{9pt} 3\hspace{3pt}2\hspace{3pt}5\\ 
        5&\mapsto 0\hspace{3pt}7\hspace{3pt}4 \hspace{9pt} 0\hspace{3pt}6\hspace{3pt}5 \hspace{9pt} 3\hspace{3pt}2\hspace{3pt}5\\ 
        6&\mapsto 0\hspace{3pt}7\hspace{3pt}4 \hspace{9pt} 3\hspace{3pt}6\hspace{3pt}4 \hspace{9pt} 3\hspace{3pt}1\hspace{3pt}5\\ 
        7&\mapsto 0\hspace{3pt}7\hspace{3pt}4 \hspace{9pt} 3\hspace{3pt}6\hspace{3pt}4 \hspace{9pt} 3\hspace{3pt}2\hspace{3pt}5\\
        A&\mapsto CBB \ CBA \ CDA\\
        B&\mapsto CBB \ CBB \ CDA\\
        C&\mapsto CDB \ CBA \ CDA\\
        D&\mapsto CDB \ CBB \ CDA
    \end{align*}
    Thus, we see that mapping $\{5\}$ to A, $\{4,6,7\}$ to B, $\{0,3\}$ to C and $\{1,2\}$ to D gives us a factor map from the constant $4$-speedup to the constant $2$-speedup. That is, $(X_\chi, T^2)$ is a factor of $(X_\chi, T^4)$.
    \item For the constant speedup by $c=8$ or $c=10$, a similar procedure shows that the system sped-up by any divisor of $c$ is a factor. The constant $8$-speedup has the constant $4$-speedup as a factor and therefore also the constant $2$-speedup and the original system as factors. The constant $10$-speedup has the constant $2$-, the constant $5$-speedup, and the original system as factors.
    \end{enumerate}
    \end{eg}

    We believe that the relationships highlighted in the previous example hold for any constant length left proper substitution $\theta$, any $c>1$ with $\gcd(|\theta|,c)=1$, and any factor $l$ of $c$.

    \begin{conjecture}
        Given any constant length, left proper substitution $\theta$ and $c>1$ such that $\gcd(c,|\theta|)=1$, if $l$ is a factor of $c$ with $1\leq l< c$, then the Toeplitz flow $(X_\theta, T^l)$ is a factor of $(X_\theta, T^c)$.
    \end{conjecture}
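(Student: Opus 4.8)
The plan is to first reduce the conjecture to a clean speedup statement. Write $m = c/l$, so that $m>1$ and $T^c = (T^l)^m$; since $l \mid c$ and $\gcd(c,|\theta|)=1$ we have $\gcd(l,|\theta|)=1$ and $\gcd(m,|\theta|)=1$, hence by Theorem~\ref{Theorem:Constant Speedup Characterization} both $(X_\theta,T^l)$ and $(X_\theta,T^c)$ are Toeplitz flows with the same underlying odometer as $(X_\theta,T)$. In these terms $(X_\theta,T^c)$ is precisely the constant $m$-speedup of the Toeplitz flow $(X_\theta,T^l)$, and this speedup is itself Toeplitz. So it suffices to prove the following self-contained statement: \emph{if $(Y,R)$ is a Toeplitz flow whose constant $m$-speedup $(Y,R^m)$ is again Toeplitz, then $(Y,R)$ is a topological factor of $(Y,R^m)$.} I would prove the conjecture in this reduced form.

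Next I would build the candidate factor map on the maximal equicontinuous factor. Both $(Y,R)$ and $(Y,R^m)$ are minimal, almost one-to-one extensions of the common odometer $Z=\varprojlim \Z/p_k\Z$ via the \emph{same} factor map $\pi\colon Y\to Z$, which intertwines $R$ with the $+1$ action and $R^m$ with the $+m$ action. Because $\gcd(m,p_k)=1$ for all $k$, the map $z\mapsto mz$ is a continuous automorphism of the odometer group $Z$; let $\Phi$ be its inverse. A direct check gives $\Phi(z+m)=\Phi(z)+1$, so $\Phi$ is a topological conjugacy from $(Z,+m)$ onto $(Z,+1)$. Any factor map $\mathcal{F}\colon(Y,R^m)\to(Y,R)$ must descend to a map of exactly this form, namely $\Phi$ composed with an odometer translation (the translation accounting for the alignment shift seen in the worked examples). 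Fixing such a base map $\bar{\mathcal{F}}$ and using that $\pi$ is almost one-to-one, there is a residual set $Z_0\subseteq Z$ of points with singleton fibers, and on the still-residual set $Z_0\cap \bar{\mathcal{F}}^{-1}(Z_0)$ the map is forced by $\mathcal{F}(\pi^{-1}(z))=\pi^{-1}(\bar{\mathcal{F}}(z))$.

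The crux is to promote this densely defined map to a continuous block code. Here I would use the recoding from the algorithm of Section~\ref{Sec: Determining if Toeplitz}: both $(Y,R^m)=(X_\theta,T^c)$ and $(Y,R)=(X_\theta,T^l)$ recode as constant length $|\theta|$ substitution systems, on the alphabets of length-$c$ and length-$l$ words of the language of $\theta$ respectively, and after raising both to a common power their substitutions share a common length. I would then run the coincidence analysis of Coven--Quas--Yassawi~\cite{CovenQuasYassawi2016}, exactly as in Example~\ref{Example: const 2 speedup of 3-Teoplitz system}: locate the non-coincidence columns of each substitution and show that, after the alignment shift prescribed by $\bar{\mathcal{F}}$, every non-coincidence column of the target $(Y,R)$ can be matched to a non-coincidence column of the source $(Y,R^m)$ while all remaining non-coincidence columns of the source collapse. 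This is the mechanism that produced the explicit letter-to-letter (post-shift) factor maps for $c=5,7,4,8,10$ onto their divisor speedups, and it would realize $\mathcal{F}$ as a genuine sliding-block code, hence a continuous factor map.

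The \emph{main obstacle} is carrying out this last step uniformly in $\theta$, $c$, and $l$: one must show that the coincidence (column) structure of the $c$-speedup substitution refines that of the $l$-speedup substitution \emph{compatibly with $\bar{\mathcal{F}}$}, so that the proposed collapse is consistent and respects the substitutions. Equivalently, I would need a structural description of the aperiodic (``hole'') positions and the column number of the recoded speedup substitutions purely in terms of $\theta$ and the unit determining $\Phi$, together with a proof that the extra source columns collapse without obstruction. In the examples this compatibility is verified by direct inspection of the non-coincidence positions; the heart of a general proof is to understand how the coincidences of $\theta$ propagate through the length-$c$ recoding and to track them under the odometer automorphism $\Phi$, and I expect this to be where the real work lies.
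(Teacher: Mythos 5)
This statement is a \emph{conjecture} in the paper: the authors do not prove it. They support it only with worked examples (the factor maps from the $c=4,5,7,8,10$ constant speedups of the length-$3$ substitution $\chi$ onto their divisor speedups, produced via the coincidence method of \cite{CovenQuasYassawi2016}), and explicitly state that they ``believe'' the general pattern holds. So there is no paper proof to compare yours against; the relevant question is whether your proposal closes the open problem, and it does not.

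Your preliminary steps are sound: the reduction to the statement ``if $(Y,R)$ is Toeplitz and $(Y,R^m)$ is Toeplitz, then $(Y,R)$ is a factor of $(Y,R^m)$'' is valid (with $Y=X_\theta$, $R=T^l$, $m=c/l$, and Theorem~\ref{Theorem:Constant Speedup Characterization} guaranteeing both systems are Toeplitz), and the identification of the forced map on the common odometer as $z\mapsto m^{-1}z + t$ is correct, since factor maps between minimal rotations on monothetic groups are affine. But the decisive step --- promoting the map that is forced on the residual set of singleton fibers to a \emph{continuous} factor map --- is exactly the content of the conjecture, and your proposal only describes a plan for it (run the Coven--Quas--Yassawi coincidence analysis on the recoded constant-length substitutions and match non-coincidence columns compatibly with the base map), while explicitly conceding that carrying this out uniformly in $\theta$, $c$, $l$ is ``where the real work lies.'' This gap is genuine, not routine bookkeeping: an equivariant correspondence defined on singleton fibers over conjugate odometers need not extend continuously --- in general, two Toeplitz flows over the same odometer are not factors of one another even though their singleton-fiber points match up bijectively over the odometer --- so the extension must be wrung out of the specific arithmetic relation between the two speedups, and neither you nor the paper does this. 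In short: your scaffolding is consistent with the evidence the paper assembles (your step 3 is precisely the computation the authors perform in Example~\ref{Example: const 2 speedup of 3-Teoplitz system} and its sequels), but the statement remains unproven by both.
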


{\subsection{Toeplitz speedups that are not conjugate to constant speedups} We previously showed that many Toeplitz speedups are conjugate to constant speedups.}
We now demonstrate that there exist minimal speedups $(X,S)$ of Toeplitz flows $(X,T)$ with orbit number $c$ such that $(X,S)$ is Toeplitz and not conjugate to $(X,T^c)$. By construction, the speedup $(X,S)$ in the next example will have an $S$-coboundary for $p(x)-c$ even though $(c,p_k) > 1$ for every $k$.
\begin{eg}\label{Example: Not conjugate to constant}
    Let $\mathcal{A} = \mathcal{A}_0 = \{0,1\}$ and for each $i\in \N$, define $\mathcal{A}_i = \{a,b\}$. Define $\theta_1:\mathcal{A}_1\to \mathcal{A}$ by $\theta_1(a) = 1001$ and $\theta_1(b) = 1010$. For all $i\geq 2$, define $\theta_i(a) = aab$ and $\theta_i(b) = abb$. Then the S-adic shift generated by this sequence of substitutions generates a Toeplitz flow $(X,T)$ with period structure $(p_k) = (4\cdot 3^k)$.

    Let $A = [\theta_1(\theta_2(a))] = \{x\mid x_{[0,11]}=1001~1001~1010\}$ and $B = [\theta_1(\theta_2(b))] = \{x\mid x_{[0,11]}=1001~1010~1010\}$ and define $p:X\to \Z^+$ by
    $$p(x) = \begin{cases} 2 & \text{if } x\in A \cup T^4(A) \cup T^8(A)\cup B\cup T^4(B) \cup T^8(B) \\
    2 & \text{if } x\in T(A)\cup T^5(A)\cup T^9(A)\cup T(B)\cup T^5(B)\cup T^9(B)\\
    3 & \text{if } x\in T^2(A)\cup T^6(A)\cup T^{10}(A)\cup T^2(B)\cup T^6(B)\cup T^{10}(B)\\
    1 & \text{if } x\in T^3(A)\cup T^7(A)\cup T^{11}(A)\cup T^3(B)\cup T^7(B)\cup T^{11}(B).\end{cases}$$

    With this given jump function $p$, $(X,S)$ is a minimal bounded speedup of $(X,T)$ where $S(x) = T^{p(x)}(x)$. In this case, the orbit number is $c=2$.
    It is easy to check that for all $k\in \N$, the orbit permutations are $\pi_a^{(k)}=\pi_b^{(k)} = (12)$ and the orbit heights are such that $o_a^k(1) = o_a^k(2) = o_b^k(1) = o_b^k(2)$. By Theorem~\ref{Theorem: Sufficient Same Permutation and Orbits}, it follows that $(X,S)$ is a Toeplitz flow.

    We can represent $(X,S)$ as an S-adic shift generated by the sequence of alphabets $\{\mathcal{B}_i\}_{i\geq 0}$ and substitutions $(\phi_i\}_{i\geq 1}$ given by $\mathcal{B}_0 = \{0,1,2,\ldots, 6\}$, $\mathcal{B}_i = \{a,b,c,d\}$ for all $i\geq 1$, $\phi_1(a) = 012304230156$, $\phi_1(b) = 015604230156$, $\phi_1(c) = 012304230456$, $\phi_1(d) = 015604230456$, and for all $i\geq 2$ define $\phi_i(a) = abccbcabd$, $\phi_i(b) = abdcbcabd$, $\phi_i(c) = abccbccbd$, and $\phi_i(d)= abdcbccbd$. 
    
    Observe that $(X,S)$ has period structure $(q_k) = (12\cdot 9^k)$. We conclude that $(X,S)$ and $(X,T)$ have the same underlying odometer.  Further note that although $p(x) -c$ is an $S$-coboundary by Theorem \ref{Theorem: Sufficient for S-coboundary}, $(X,S)$ is not topologically conjugate to $(X,T^c)=(X,T^2)$, since $(X,T^2)$ is not minimal.
\end{eg}




Recall that there exists a speedup of the odometer $(X_{\alpha},T)$ with $\alpha = ( \alpha_1,\alpha_2, \ldots )$ that has orbit number $c$ if and only if there is some $N\in \N$ such that $\gcd(c,\alpha_i) = 1$ for all $i\geq N$. We now demonstrate how one can generalize Theorem~\ref{Theorem: Odometer Speedup Characterization} to the Toeplitz flow setting.

\begin{theorem}\label{Theorem: Generalize Odometers To Toeplitz}
    If $(X,T)$ is a Toeplitz flow with period structure $(p_{k})$ and $c\geq 1$ is an integer such that for some $N\in \N$, $\gcd\left(c, \frac{p_{k+1}}{p_k}\right) = 1$ for all $k\geq N$, then there is a minimal bounded speedup $(X,S)$ of $(X,T)$ with orbit number $c$ such that $(X,S)$ is a Toeplitz flow with the same underlying odometer as $(X,T)$. Further, $(X,S)$ and $(X,T)$ are not conjugate.
\end{theorem}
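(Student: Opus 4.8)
The plan is to build the desired speedup by lifting a speedup of the underlying odometer through the almost one-to-one factor map. Write the underlying odometer of $(X,T)$ as $(X_\alpha,T_\alpha)$ with $\alpha=(p_1,\,p_2/p_1,\,p_3/p_2,\dots)$, and let $\pi\colon X\to X_\alpha$ be the canonical almost one-to-one factor map, so that $\pi\circ T=T_\alpha\circ\pi$ and $\pi$ carries the $j$-th floor of every tower of $\mathcal{P}(k)$ into the single $j$-th floor of the level-$k$ odometer tower. The hypothesis $\gcd(c,p_{k+1}/p_k)=1$ for all $k\ge N$ is precisely the statement that $\gcd(c,\alpha_i)=1$ for all sufficiently large $i$, so Theorem~\ref{Theorem: Odometer Speedup Characterization} supplies a minimal bounded speedup $(X_\alpha,S_\alpha)$ with orbit number $c$ that is topologically conjugate to $(X_\alpha,T_\alpha)$, together with a bounded continuous jump function $q\colon X_\alpha\to\N$ realizing it. I would then set $p=q\circ\pi$ and $S=T^{p}$.

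Next I would check that $S$ is an honest minimal speedup. Since $\pi\circ S=S_\alpha\circ\pi$, injectivity of $S$ follows from that of $S_\alpha$: if $S(x)=S(x')$ then $\pi x=\pi x'$, hence $q(\pi x)=q(\pi x')$ and $T^{q(\pi x)}x=T^{q(\pi x)}x'$ forces $x=x'$; surjectivity follows because the $S$-preimage of $z$ is $T^{-q(y_0)}z$ with $y_0=S_\alpha^{-1}(\pi z)$. As $q$ is constant on the floors of the odometer KR-partition at some finite level $M\ge N$, and $\pi$ identifies the level-$M$ floor $(i,j)$ of $\mathcal{P}(M)$ with the $j$-th odometer floor, the function $p$ depends only on the floor index $j$ and not on the tower $i$. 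Consequently the orbit labeling of $\mathcal{P}(M)$ is identical on every tower, so all towers share the same orbit permutation and, for each label $j$, the same orbit height $o_i^M(j)$. Because $S_\alpha$ is minimal, its permutations are cyclic at all high levels, and under the lift the tower permutations $\pi_i^{(k)}$ equal these cyclic permutations (here $\gcd(c,\alpha_{k+1})=1$ is exactly what keeps $\pi^{(k+1)}=(\pi^{(k)})^{\alpha_{k+1}}$ a $c$-cycle from one level to the next). Minimality of $(X,S)$ then follows from Lemma~\ref{Lemma: generalize tau is primitive}: primitivity of the $\theta_i$ realizes every letter of $\mathcal{A}_l$ inside a long enough image word, while the uniform cyclic permutations force every orbit label to occur, so $\tau_k^l(a,j)$ eventually contains all of $\mathcal{A}_l\times C$.

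With these facts in hand the three conclusions follow quickly. Since some level $\mathcal{P}(M)$ has every tower carrying the same cyclic permutation and the same orbit heights, Theorem~\ref{Theorem: Sufficient Same Permutation and Orbits} shows $(X,S)$ is a Toeplitz flow. For the underlying odometer, note that $\pi\colon(X,S)\to(X_\alpha,S_\alpha)$ is a factor map onto an odometer and is almost one-to-one (it is the same map $\pi$, whose fibers are generically singletons); since $(X,S)$ is then a minimal almost one-to-one extension of the equicontinuous system $(X_\alpha,S_\alpha)$, the latter is its maximal equicontinuous factor, i.e.\ its underlying odometer, and $(X_\alpha,S_\alpha)$ is conjugate to $(X_\alpha,T_\alpha)$ by Theorem~\ref{Theorem: Odometer Speedup Characterization}. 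Hence $(X,S)$ and $(X,T)$ have the same underlying odometer. Finally, $(X,T)$ is a minimal subshift on an infinite space and $(X,S)$ is a minimal bounded speedup of it, so Theorem~\ref{Theorem: symbolic speedups not conjugate} gives that $(X,S)$ and $(X,T)$ are not conjugate.

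The main obstacle is the second paragraph: confirming that the lifted jump function $p=q\circ\pi$ produces a bona fide homeomorphism whose orbit number is exactly $c$ and whose induced tower data is both uniform across towers and cyclic at every high level. The uniformity rests on the compatibility of $\pi$ with the KR-floors, so that $p$ is genuinely a function of the floor index alone, and the cyclicity is exactly where the arithmetic hypothesis $\gcd(c,p_{k+1}/p_k)=1$ is consumed, through the relation $\pi^{(k+1)}=(\pi^{(k)})^{\alpha_{k+1}}$ between consecutive levels. Verifying minimality rigorously via Lemma~\ref{Lemma: generalize tau is primitive}, in particular that every letter-label pair is realized in a sufficiently long image word, is the step that must be carried out with care; the remaining arguments are then routine applications of the quoted theorems.
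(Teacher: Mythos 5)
Your proposal follows essentially the same route as the paper: both transplant the odometer speedup supplied by Theorem~\ref{Theorem: Odometer Speedup Characterization} onto the towers of the KR-partition of $(X,T)$ (your $p=q\circ\pi$ is exactly the paper's prescription that every tower carry the same orbit labeling as the odometer's single tower), deduce that all towers share the same cyclic permutation and orbit heights, verify minimality level-by-level, and then invoke Theorem~\ref{Theorem: Sufficient Same Permutation and Orbits} for Toeplitzness and Theorem~\ref{Theorem: symbolic speedups not conjugate} for non-conjugacy. The only substantive difference is that you justify the same-underlying-odometer claim explicitly, by noting that $\pi\colon (X,S)\to(X_\alpha,S_\alpha)$ remains an almost one-to-one factor map onto an odometer conjugate to $(X_\alpha,T_\alpha)$ — a point the paper's proof leaves implicit — which is a refinement of, not a departure from, the paper's argument.
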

\begin{proof}
    Let $(X,T)$ be a Toeplitz flow with period structure $(p_k)$ and underlying odometer given by $\alpha = (\alpha_1,\alpha_2,\ldots)$ where $\alpha_1 = p_1$, and  $\alpha_i = \frac{p_i}{p_{i-1}}$ for all $i\geq 2$. Suppose there exist $c, N\in \N$ such that for all $i\geq N$, $\gcd(c,\alpha_i) = 1$. By { Theorem \ref{Theorem: Odometer Speedup Characterization}, there exists some minimal bounded speedup of the odometer $(X_\alpha, T_\alpha)$ with orbit number $c$; further, by Lemma \ref{Lemma: cyclic permutations}}, there exists $K$ such that for all $k\geq K$ $\pi^{(k)}$ is a cyclic permutation on $\{1,2,\ldots, c\}$ for $(X_\alpha, T_\alpha)$. Let $M = \max(N,K)$. Define the jump function $p(x)$ on the towers of the KR-partition for $(X,T)$ at level $M$ so that each tower has the same orbit labellings as the single tower at level $M$ for the KR-partition for $(X_\alpha, T_\alpha)$ with orbit number $c$. Therefore, for $(X,T)$ with speedup $p(x)$, every tower has the same orbit labeling permutation $\pi^{(M)}$ which is a cyclic permutation on $\{1,2,\ldots, c\}$; additionally, the heights for each orbit agree in every tower.

    We first show that $(X,T^{p(x)})$ is minimal. Note that for every level $k$ in the sequence of KR-partitions for $(X,T)$, every tower can be interpreted as a stack of towers from the $(k-1)$st level such that the base tower is the same tower from the $(k-1)$st level, which we will call the {\em primary tower at level $k-1$}. Without loss of generality, suppose that every tower at level $k$ contains every tower from level $k-1$ (we may always choose a subsequence of our KR-partitions so this is true). Since $\pi^{(k)}$ is a permutation of $\{1,2,\ldots, c\}$ for all $k\geq M$ and every tower has the same permutation, if one starts on the base floor of any tower at level $k\geq M$, one will traverse through every orbit labelling of every tower from level $k-1$ after traversing through $c$ towers. As this is true for all $k\geq M$, we conclude that our speedup will be minimal.

    Lastly, because every tower in the KR-partition for $(X,T)$ with $k$ large enough has every tower with the same orbit permutation and every orbit has the same height in each tower, by Theorem~\ref{Theorem: Sufficient Same Permutation and Orbits}, we conclude that $(X,S) = (X,T^{p(x)})$ is a Toeplitz flow. By Theorem~\ref{Theorem: symbolic speedups not conjugate}, we conclude that $(X,S)$ and $(X,T)$ are not conjugate.
\end{proof}

\begin{remark}
    Observe that it immediately follows from Theorem~\ref{Theorem: Generalize Odometers To Toeplitz} that every Toeplitz flow $(X,T)$ with underlying odometer $(X_\alpha, T_\alpha)$ has a minimal speedup $(X,S)$ which is a Toeplitz flow if there exists some prime $p$ such that $p$ only divides finitely many $\alpha_i$.
    Thus, given a Toeplitz flow $(X, T)$ with underlying odometer given by $\alpha$, where $\alpha = (\alpha_1,\alpha_2,\ldots)$ does not have every natural number as a divisor, it will follow that there is some minimal speedup that generates a Toeplitz flow with the same underlying odometer. In particular, a Toeplitz flow with underlying odometer given by $\alpha  = (2,3,5,7,\ldots)$ will have a minimal speedup which is a Toeplitz flow for every orbit number $c\in \N$.
\end{remark}

{ The following conjecture gives a potential restriction to the existence of Toeplitz speedups with a given orbit number. Note that Theorem~\ref{Theorem: no speedup with c=2 is Toeplitz} proves the special case where $\theta(a) = ab$, $\theta(b) = aa$, and $c=2 = |\theta|$, and Question~\ref{Question: constant length substitutions no orbit number constant length} asks a similar question for general constant-length substitutions.}

\begin{conjecture}
    If $\gcd\left(c,\frac{p_k}{p_{k-1}}\right)>1$ for infinitely many $k$, then $(X,T)$ does not have a minimal speedup with orbit number $c$ that is a Toeplitz flow. Hence, if $\theta$ is a constant length left proper substitution, then the Toeplitz flow $(X_\theta,T)$ does not have a minimal speedup with orbit number $c = |\theta|$ that is also a Toeplitz flow. 
\end{conjecture}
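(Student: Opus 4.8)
The plan is to reduce the statement to two pieces: a clean structural lemma about how \emph{uniform} tower permutations compose under the period structure, which I can prove outright, together with the necessity of the hypothesis of Theorem~\ref{Theorem: Sufficient Same Permutation and Orbits}, which is the genuinely hard input. Throughout write $\alpha_i = p_i/p_{i-1}$ for the entries of the underlying odometer, so that the hypothesis reads $\gcd(c,\alpha_i)>1$ for infinitely many $i$; the second sentence is then immediate, since a constant length left proper $\theta$ with $c=|\theta|$ gives $p_k=|\theta|^k$ and $\alpha_i=|\theta|=c$, whence $\gcd(c,\alpha_i)=c>1$ for all $i$. The first piece should be viewed as the Toeplitz analogue of the obstruction underlying Theorem~\ref{Theorem: Odometer Speedup Characterization}.

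First I would prove the following composition lemma. Suppose $(X,S)$ is a minimal $c$-speedup and there is a level $k_0$ at which every tower of $\mathcal{P}(k_0)$ carries the same orbit permutation $\pi = \pi_i^{(k_0)}$. By minimality $\pi$ must be a single $c$-cycle, for otherwise the orbit labels split into $\gcd$-many classes that never communicate, contradicting Lemma~\ref{Lemma: generalize tau is primitive} exactly as in the converse direction of Theorem~\ref{Theorem:Constant Speedup Characterization}. Because each tower of $\mathcal{P}(k_0+1)$ is a stack of $\alpha_{k_0+1}$ level-$k_0$ towers glued base-to-top, and each of the $\alpha_{k_0+1}$ top-crossings applies the same $\pi$, every tower of $\mathcal{P}(k_0+1)$ again carries a common permutation, namely $\pi^{\alpha_{k_0+1}}$. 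Iterating, the common permutation at level $k_0+m$ is $\pi^{\,\alpha_{k_0+1}\cdots\alpha_{k_0+m}}$, and minimality forces each of these to be a full $c$-cycle. Since a power $\pi^{t}$ of a $c$-cycle is a $c$-cycle if and only if $\gcd(t,c)=1$, and since $\gcd(c,\alpha_{k_0+1}\cdots\alpha_{k_0+m})$ is nondecreasing under divisibility in $m$, this yields $\gcd(c,\alpha_i)=1$ for every $i>k_0$. Thus a uniform-permutation level can coexist with minimality only when the $\gcd$ condition eventually holds; contrapositively, under our hypothesis \emph{no} level of $\mathcal{P}(k)$ has all towers sharing one permutation.

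The remaining step, and the main obstacle, is to show that if $(X,S)$ is Toeplitz then such a uniform-permutation level must exist; combined with the composition lemma this produces the desired contradiction. This is precisely the necessity (for the permutations) of the sufficient condition in Theorem~\ref{Theorem: Sufficient Same Permutation and Orbits}, which is only conjectured. I would attempt it directly: a Toeplitz $(X,S)$ is an almost one-to-one extension of its own odometer, and I would try to show that the orbit-labeling data forces the family $\{\pi_i^{(k)}\}_i$ to equalize at some scale, arguing that a persistent discrepancy between two towers' permutations (as in Example~\ref{Example: New Non Example}) produces incommensurable return times to the base and hence destroys every candidate essential period — the same phenomenon made explicit in Theorems~\ref{Theorem: Example not Toeplitz} and \ref{Theorem: no speedup with c=2 is Toeplitz}.

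If that necessity statement resists a direct proof, I would fall back on the combinatorial route of Section~\ref{sec: Not Toeplitz}, bypassing permutations altogether. Using the $\tau$-substitution of Definition~\ref{Definition: generalize tau}, the return-word substitution $\phi$ to the base symbol $(a,1)$, and the jump-encoding $\psi$, any $S$-Toeplitz point $y$ yields a sequence $y'$ that must itself be Toeplitz under the shift, so its base symbol must appear with a single finite period $t$ inside every $\psi(\phi^k(\cdot))$ word. The task is then to show that $\gcd(c,\alpha_i)>1$ for infinitely many $i$ forces, at infinitely many scales $k$, two return words whose $\psi\circ\phi^k$-lengths are incongruent modulo any fixed $t$, so that no finite $t$ survives — generalizing the explicit bound $t\ge 6\cdot 4^k$ in the proof of Theorem~\ref{Theorem: no speedup with c=2 is Toeplitz}. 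The difficulty here is uniform control: the orbit-height vectors $(o_i^k(j))_j$ and the permutations both vary from tower to tower, and one must track how a nontrivial $\gcd$ at level $i$ propagates an irreducible length defect through the stacking, rather than cancelling, at infinitely many $i$.
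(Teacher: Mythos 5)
The statement you set out to prove is one of the paper's closing conjectures; the paper contains no proof of it, so your attempt has to stand entirely on its own. Two pieces of it do stand. The reduction of the second sentence to the first is correct: with $p_k=|\theta|^k$ one has $p_k/p_{k-1}=|\theta|=c$, so $\gcd\left(c,p_k/p_{k-1}\right)=c>1$ for all $k$. Your composition lemma is also correct and genuinely useful: if all towers of $\mathcal{P}(k_0)$ share one permutation $\pi$, minimality forces $\pi$ to be a single $c$-cycle (labels in distinct $\pi$-cycles never communicate, contradicting Lemma~\ref{Lemma: generalize tau is primitive}); since every tower of $\mathcal{P}(k_0+m)$ is a stack of $p_{k_0+m}/p_{k_0}$ level-$k_0$ towers, each carrying $\pi$, its common permutation is conjugate to $\pi^{p_{k_0+m}/p_{k_0}}$, and a power of a $c$-cycle is a $c$-cycle exactly when the exponent is prime to $c$; hence $\gcd\left(c,p_i/p_{i-1}\right)=1$ for every $i>k_0$. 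This cleanly shows that the conjecture at hand would follow from the permutation half of the necessity direction of the paper's final conjecture.

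The genuine gap is that this necessity --- if $(X,S)$ is Toeplitz then some level of the KR-partition has all towers sharing one orbit permutation --- is never proved; it is itself left open in the paper, and both of your proposed routes to it are plans rather than arguments. The direct route extrapolates from Theorems~\ref{Theorem: Example not Toeplitz} and~\ref{Theorem: no speedup with c=2 is Toeplitz}, but those proofs are ad hoc computations tied to the specific substitution $\theta(a)=ab$, $\theta(b)=aa$ and its particular return words; they exhibit the phenomenon of incommensurable return times without supplying a general mechanism. The combinatorial fallback founders on exactly the difficulty you name and do not resolve: one must show that $\gcd\left(c,p_i/p_{i-1}\right)>1$ at infinitely many $i$ forces return-word lengths that stay incongruent modulo every candidate period $t$, i.e., that the length defects cannot cancel under stacking. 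The orbit-height vectors $\left(o_i^k(j)\right)_j$ and the tower permutations vary freely enough that such cancellation is a real possibility which must be excluded (compare Example~\ref{Example: Not conjugate to constant}, where equal heights and uniform permutations are arranged as soon as the gcd obstruction disappears). As written, your argument proves only that under the hypothesis every minimal $c$-speedup has non-uniform tower permutations at every sufficiently large level; it does not rule out that such a non-uniform speedup is Toeplitz. In short, you have reduced one of the paper's open conjectures to (part of) the other --- a worthwhile structural observation, but not a proof.
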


{ We conclude with the natural question about whether a minimal bounded Toeplitz speedup must have the same underlying odometer as the original flow.}

\begin{question}
    Is it possible to speedup a Toeplitz flow in such a way to obtain a Toeplitz flow with a different underlying odometer? { Given any minimal bounded speedup of a Toeplitz flow, is the maximal equicontinuous factor always the original underlying odometer?}
\end{question}

\medskip
\textbf{Acknowledgements.}
SR would like to thank the OeAD Marietta Blau-Scholarship for funding her research stay at Furman University. 
LA gratefully acknowledges financial support for this project from the Henry Keith and Ellen Hard Townes endowed professorship at Furman University and from the Fulbright U.S. Scholar Program.
Additionally, the authors would like to thank Sarah Frick for helpful conversations in the early stages of this project and Reem Yassawi for her expertise in finding factor maps of constant length substitutions.
{ The authors are grateful to the reviewers for their valuable comments and suggestions.}
\bibliographystyle{plain}
\bibliography{BIBspeedups}

@article {AAO,
    AUTHOR = {Alvin, Lori and Ash, Drew D. and Ormes, Nicholas S.},
     TITLE = {Bounded topological speedups},
   JOURNAL = {Dyn. Syst.},
  FJOURNAL = {Dynamical Systems. An International Journal},
    VOLUME = {33},
      YEAR = {2018},
    NUMBER = {2},
     PAGES = {303--331},
      ISSN = {1468-9367,1468-9375},
   MRCLASS = {37B05 (37A20 37A25 37B10 37B40 54H20)},
  MRNUMBER = {3788286},
MRREVIEWER = {Marcin\ C.\ Kulczycki},
       DOI = {10.1080/14689367.2017.1369009},
       URL = {https://doi.org/10.1080/14689367.2017.1369009},
}

@article {AOW,
    AUTHOR = {Arnoux, Pierre and Ornstein, Donald S. and Weiss, Benjamin},
     TITLE = {Cutting and stacking, interval exchanges and geometric models},
   JOURNAL = {Israel J. Math.},
  FJOURNAL = {Israel Journal of Mathematics},
    VOLUME = {50},
      YEAR = {1985},
    NUMBER = {1-2},
     PAGES = {160--168},
      ISSN = {0021-2172},
   MRCLASS = {58F11 (28D05)},
  MRNUMBER = {788073},
MRREVIEWER = {Kenneth\ R.\ Berg},
       DOI = {10.1007/BF02761122},
       URL = {https://doi.org/10.1007/BF02761122},
}

@article {ADL,
    AUTHOR = {Ash, Drew D. and Dykstra, Andrew and LeMasurier, Michelle},
     TITLE = {Bratteli diagrams for bounded topological speedups},
   JOURNAL = {Dyn. Syst.},
  FJOURNAL = {Dynamical Systems. An International Journal},
    VOLUME = {38},
      YEAR = {2023},
    NUMBER = {2},
     PAGES = {249--267},
      ISSN = {1468-9367,1468-9375},
   MRCLASS = {37B05 (37B10)},
  MRNUMBER = {4587816},
       DOI = {10.1080/14689367.2023.2170775},
       URL = {https://doi.org/10.1080/14689367.2023.2170775},
}

@article {AO,
    AUTHOR = {Ash, Drew D. and Ormes, Nicholas S.},
     TITLE = {Topological speedups for minimal {C}antor systems},
   JOURNAL = {Israel J. Math.},
  FJOURNAL = {Israel Journal of Mathematics},
    VOLUME = {261},
      YEAR = {2024},
    NUMBER = {1},
     PAGES = {91--126},
      ISSN = {0021-2172,1565-8511},
   MRCLASS = {37B05 (37A20 37A55)},
  MRNUMBER = {4776488},
       DOI = {10.1007/s11856-023-2568-7},
       URL = {https://doi.org/10.1007/s11856-023-2568-7},
}

@book {Bruin_Book,
    AUTHOR = {Bruin, Henk},
     TITLE = {Topological and ergodic theory of symbolic dynamics},
    SERIES = {Graduate Studies in Mathematics},
    VOLUME = {228},
 PUBLISHER = {American Mathematical Society, Providence, RI},
      YEAR = {2022},
     PAGES = {xvii+460},
      ISBN = {[9781470469849]; [9781470472191]; [9781470472184]},
   MRCLASS = {37B10 (11J70 37Axx 37B05 68Q45)},
  MRNUMBER = {4504861},
       DOI = {10.1090/gsm/228},
       URL = {https://doi.org/10.1090/gsm/228},
}

@incollection {Downarowicz,
    AUTHOR = {Downarowicz, Tomasz},
     TITLE = {Survey of odometers and {T}oeplitz flows},
 BOOKTITLE = {Algebraic and topological dynamics},
    SERIES = {Contemp. Math.},
    VOLUME = {385},
     PAGES = {7--37},
 PUBLISHER = {Amer. Math. Soc., Providence, RI},
      YEAR = {2005},
      ISBN = {0-8218-3751-6},
   MRCLASS = {37B05 (37A35 37B10 37B20)},
  MRNUMBER = {2180227},
MRREVIEWER = {Yves\ Lacroix},
       DOI = {10.1090/conm/385/07188},
       URL = {https://doi.org/10.1090/conm/385/07188},
}

@article {Durand,
    AUTHOR = {Durand, Fabien},
     TITLE = {A characterization of substitutive sequences using return
              words},
   JOURNAL = {Discrete Math.},
  FJOURNAL = {Discrete Mathematics},
    VOLUME = {179},
      YEAR = {1998},
    NUMBER = {1-3},
     PAGES = {89--101},
      ISSN = {0012-365X,1872-681X},
   MRCLASS = {68R15 (68Q68)},
  MRNUMBER = {1489074},
MRREVIEWER = {Val\'erie\ Berth\'e},
       DOI = {10.1016/S0012-365X(97)00029-0},
       URL = {https://doi.org/10.1016/S0012-365X(97)00029-0},
}

@book {Durand_Perrin,
    AUTHOR = {Durand, Fabien and Perrin, Dominique},
     TITLE = {Dimension groups and dynamical systems---substitutions,
              {B}ratteli diagrams and {C}antor systems},
    SERIES = {Cambridge Studies in Advanced Mathematics},
    VOLUME = {196},
 PUBLISHER = {Cambridge University Press, Cambridge},
      YEAR = {2022},
     PAGES = {vii+584},
      ISBN = {978-1-108-83688-9},
   MRCLASS = {37-02 (20F60 20K99 37Axx 37Bxx 54F45)},
  MRNUMBER = {4354542},
MRREVIEWER = {Olena\ Karpel},
       DOI = {10.1017/9781108976039},
       URL = {https://doi.org/10.1017/9781108976039},
}

@article {Herman_Putnam_Skau,
    AUTHOR = {Herman, Richard H. and Putnam, Ian F. and Skau, Christian F.},
     TITLE = {Ordered {B}ratteli diagrams, dimension groups and topological
              dynamics},
   JOURNAL = {Internat. J. Math.},
  FJOURNAL = {International Journal of Mathematics},
    VOLUME = {3},
      YEAR = {1992},
    NUMBER = {6},
     PAGES = {827--864},
      ISSN = {0129-167X,1793-6519},
   MRCLASS = {46L99 (46L55 46L80 54H20 58F11)},
  MRNUMBER = {1194074},
MRREVIEWER = {Yiu\ Tung\ Poon},
       DOI = {10.1142/S0129167X92000382},
       URL = {https://doi.org/10.1142/S0129167X92000382},
}

@article {Neveu1,
    AUTHOR = {Neveu, Jacques},
     TITLE = {Temps d'arr\^et d'un syst\`eme dynamique},
   JOURNAL = {Z. Wahrscheinlichkeitstheorie und Verw. Gebiete},
  FJOURNAL = {Zeitschrift f\"ur Wahrscheinlichkeitstheorie und Verwandte
              Gebiete},
    VOLUME = {13},
      YEAR = {1969},
     PAGES = {81--94},
   MRCLASS = {28.70},
  MRNUMBER = {255767},
MRREVIEWER = {B.\ Weiss},
       DOI = {10.1007/BF00537013},
       URL = {https://doi.org/10.1007/BF00537013},
}

@article {Neveu2,
    AUTHOR = {Neveu, Jacques},
     TITLE = {Une d\'emonstration simplifi\'ee et une extension de la
              formule d'{A}bramov sur l'entropie des transformations
              induites},
   JOURNAL = {Z. Wahrscheinlichkeitstheorie und Verw. Gebiete},
  FJOURNAL = {Zeitschrift f\"ur Wahrscheinlichkeitstheorie und Verwandte
              Gebiete},
    VOLUME = {13},
      YEAR = {1969},
     PAGES = {135--140},
   MRCLASS = {28.70},
  MRNUMBER = {255768},
MRREVIEWER = {B.\ Weiss},
       DOI = {10.1007/BF00537019},
       URL = {https://doi.org/10.1007/BF00537019},
}

@article {CovenQuasYassawi2016,
    AUTHOR = {Coven, Ethan M. and Quas, Anthony and Yassawi, Reem},
     TITLE = {Computing automorphism groups of shifts using atypical
              equivalence classes},
   JOURNAL = {Discrete Anal.},
  FJOURNAL = {Discrete Analysis},
      YEAR = {2016},
     PAGES = {Paper No. 3, 28},
      ISSN = {2397-3129},
   MRCLASS = {54H20 (37B05 37B10 37B15)},
  MRNUMBER = {3533302},
MRREVIEWER = {Anima\ Nagar},
       DOI = {10.19086/da.611},
       URL = {https://doi.org/10.19086/da.611},
}

\end{document}